\numberwithin{equation}{subsection}
\theoremstyle{plain}
\newtheorem{thm}[subsection]{Theorem}
\newtheorem{prop}[subsection]{Proposition}
\newtheorem{lemma}[subsection]{Lemma}
\newtheorem{cor}[subsection]{Corollary}
\theoremstyle{definition}
\newtheorem{defn}[subsection]{Definition}
\newtheorem{cont}[subsection]{Contents}
\newtheorem{ackn}[subsection]{Acknowledgement}
\theoremstyle{remark}
\def\noqed{\renewcommand{\qedsymbol}{}}
\begin{document}
\title{On the Chow groups of certain geometrically rational $5$-folds}
\author{Ambrus P\'al}
\date{December 15, 2013}
\address{Department of Mathematics, 180 Queen's Gate, Imperial College, London, SW7 2AZ, United Kingdom}
\email{a.pal@imperial.ac.uk}
\begin{abstract} We give an explicit regular model for the quadric fibration studied in \cite{Pi1}. As an application we show that this construction furnishes a counterexample for the integral Tate conjecture in any odd characteristic for some sufficiently large finite field. We study the \'etale cohomology of this regular model, and as a consequence we derive that these counterexamples are not torsion.
\end{abstract}
\footnotetext[1]{\it 2000 Mathematics Subject Classification. \rm 14M20, 14C15, 14G13.}
\maketitle

\section{Introduction}

Let $k$ be a field and let $x,y,z$ be the homogeneous coordinates for $\mathbb P^2_k$. Let $H$ be the set of linear forms
$$h=e_xx+e_yy+e_zz$$
where $e_x,e_y,e_z\in\{0,1\}$. Let
$$l_i=b_ix +c_iy+d_iz,\quad b_i,c_i,d_i\in k$$
be linear forms, too, where $i =1,2$. Assume that the characteristic of $k$ is not two. Let $a\in k^*-(k^*)^2$ and let $Q$ be the smooth, projective quadric over the function field $F=k(x,y)=k(\mathbb P^2_k)$ of $\mathbb P^2_k$ defined by the homogeneous equation:
\begin{equation}
x^2_0-ax^2_1-fx^2_2+afx^2_3-g_1g_2x^2_4=0
\end{equation}
in $\mathbb P^4_F$, where
$$f=\frac{x}{y},\quad g_1=\frac{\prod_{h\in H}(l_1+h)}{y^8},\quad g_2=\frac{\prod_{h\in H}(l_2+h)}{z^8}.$$
Recall that we say that a set $S$ of lines in $\mathbb P^2_k$ is in general position if the intersection of any three elements of $S$ is empty. Let $L$ be the set:
$$L=\{x=0\}\cup\{y=0\}\cup\{l_1+h=0|h\in H\}\cup\{l_2+h=0|h\in H\}$$
of lines (which means that we assume that none of the linear forms above are zero). Our first result is the following 
\begin{thm} Assume that $L$ is in general position. Then there is a projective smooth variety $X$ defined over $k$ which has a fibration over $\mathbb P^2_k$ whose generic fibre is the quadric $Q$.
\end{thm}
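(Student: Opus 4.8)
The plan is to realise $X$ as an explicit resolution of singularities of the evident projective model of $Q$ over $\mathbb P^2_k$; the point is that the general position hypothesis forces that model to have only mild singularities, along loci that can be described completely, so that it can be resolved by hand — by blow-ups that are available in any characteristic. Rescaling the coordinates $x_i$ by suitable monomials in $x,y,z$ (equivalently, trivialising the bundle below over the generic point of $\mathbb P^2_k$) shows that $Q$ is the generic fibre of the quadric bundle
\[
\mathcal X_0=\{\,x_0^{2}-ax_1^{2}-xy\,x_2^{2}+axy\,x_3^{2}-P_1P_2\,x_4^{2}=0\,\}\subset\mathbb P(\mathcal E),\qquad P_j=\textstyle\prod_{h\in H}(l_j+h),
\]
over $\mathbb P^2_k$, where each $P_j$ is homogeneous of degree $8$, the coefficients are regarded as sections of the appropriate line bundles, and $\mathcal E$ is an explicit rank-$5$ bundle on $\mathbb P^2_k$ (for instance $\mathcal E=\mathcal O^{2}\oplus\mathcal O(1)^{2}\oplus\mathcal O(8)$, with the displayed equation a section of $\mathcal O_{\mathbb P(\mathcal E)}(2)$). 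Since $\mathcal X_0$ is projective over $k$, the whole problem is to modify it into something smooth over $k$ while preserving the generic fibre.

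The discriminant divisor of $\mathcal X_0\to\mathbb P^2_k$ is $2\{x=0\}+2\{y=0\}+\{P_1=0\}+\{P_2=0\}$, with support $L$. Because $L$ is in general position, the $16$ lines $\{l_j+h=0\}$ occur in it with multiplicity one, the lines $\{x=0\},\{y=0\}$ with multiplicity two, and the only singular points of $L$ are the $\binom{18}{2}$ pairwise intersections, each an ordinary node; in particular no three components of the discriminant pass through one point. A Jacobian computation in local coordinates on $\mathbb P(\mathcal E)$ adapted to these divisors then shows that $\mathcal X_0$ is smooth over $\mathbb P^2_k\setminus L$ and over the open part of each line $\{l_j+h=0\}$ (a reduced, corank-one degeneration), while its singular locus is exactly: two smooth, geometrically reducible curves $C_x\subset\mathcal X_0$ lying over $\{x=0\}$ and $C_y$ over $\{y=0\}$ — each the pair of $k(\sqrt a)$-conjugate lines $\{x_0=x_1=x_4=0,\ x_2^{2}=ax_3^{2}\}$ — along whose generic part $\mathcal X_0$ has transverse singularity the fourfold ordinary double point (the affine cone over a smooth three-dimensional quadric); the vertex line $V_0\cong\mathbb P^1_k$ of the fibre over $\{x=0\}\cap\{y=0\}$, with the same transverse type and meeting $C_x$ and $C_y$ in a conjugate pair of points; the $2\cdot16$ points where $C_x$ or $C_y$ lies over an intersection of $\{x=0\}$ or $\{y=0\}$ with an $l_j$-line, where the transverse type is worse; and $\binom{16}{2}$ isolated fivefold ordinary double points, one over each intersection of two of the $l_j$-lines. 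General position is precisely what keeps this list finite and the local models this simple: it prevents a third component of $L$, hence a worse degeneration, from occurring at any of these points.

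Each of the singularity types just listed is rational and admits an explicit finite resolution by blow-ups along smooth centres: a single blow-up, with exceptional divisor a smooth quadric, at each of the $\binom{16}{2}$ isolated points; blowing up $C_x$, $C_y$, and $V_0$, which resolves the fourfold double points along their generic parts; and, near the conjugate pair on $V_0\cap C_x\cap C_y$ and near the $2\cdot16$ special points of $C_x\cup C_y$, a short further sequence of blow-ups read off from the local normal forms. Carried out in an order in which each centre is smooth at the moment it is blown up, this produces a morphism $X\to\mathcal X_0$ with $X$ smooth; since all centres are defined over $k$ and $X_{\bar k}$ is smooth, $X$ is smooth over $k$, and being an iterated blow-up of a projective $k$-variety it is projective. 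Finally all centres lie over the union of lines $L\subsetneq\mathbb P^2_k$, so $X\to\mathcal X_0$ is an isomorphism over $\mathbb P^2_k\setminus L$; hence the composite $\pi\colon X\to\mathbb P^2_k$ has generic fibre $Q$, which is what was asserted.

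The main obstacle is the resolution at the few points where several of these singular loci meet — above all the conjugate pair on $V_0\cap C_x\cap C_y$, where three singular curves pass through one point, and the $2\cdot16$ points of $C_x\cup C_y$ lying over the nodes of $L$ on $\{x=0\}\cup\{y=0\}$ — where the blow-up centres and their order must be chosen carefully. The routine but lengthy ingredient throughout is the explicit local normal form, near its singular locus, of the diagonal quadric bundle $\{X_0^{2}-aX_1^{2}-t(X_2^{2}-aX_3^{2})-uX_4^{2}=0\}$ with $t$ a uniformiser, $u$ a unit and $a$ a non-square, together with an explicit resolution of that normal form.
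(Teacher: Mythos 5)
Your proposal is correct and follows essentially the same route as the paper: an explicit projective quadric-bundle model over $\mathbb P^2_k$ (the paper glues the three affine charts $Q_x,Q_y,Q_z$ rather than working in $\mathbb P(\mathcal E)$, which amounts to the same thing), the same identification of the singular locus (your $C_x\cup C_y$ is the paper's $Z$, your $V_0$ is its $W$, and the $\binom{16}{2}=120$ isolated nodes over the pairwise intersections of the $l_j$-lines are its set $D$), and resolution by the same blow-ups. The point you flag as the main obstacle is settled in the paper exactly as you anticipate — blow up $V_0$ and the isolated points first, then the proper transform of $C_x\cup C_y$ — and its local computation shows that this single blow-up of $Z$ already resolves the $2\cdot 16$ special points where $C_x\cup C_y$ lies over $L\cap\{xy=0\}$, with no further blow-ups needed there.
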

The assumption is not very restrictive; we will show that there is a choice of $l_1$ and $l_2$ such that $L$ is in general position when the cardinality of $k$ is sufficiently large (see Proposition 2.2). The quadric $Q$ has a $k(\sqrt a)$-rational point, and hence it is $k(\sqrt a,x,y)$-rational. Therefore $X$ is rational over $k(\sqrt a)$.   
\begin{thm} Assume that $k$ is a finite field. Also suppose that $L$ is in general position and $b_i,c_i,d_i\neq0,-1$ for $i =1,2$. Then both the natural forgetful map:
\begin{equation}
CH^2(X)\rightarrow CH^2(X\times_k\overline k)^{\mathrm{Gal}(\overline k/k)}
\end{equation}
and the cycle class map:
\begin{equation}
CH^2(X)\otimes\mathbb Z_2\rightarrow H^4_{\textrm{\rm\'et}}(X,\mathbb Z_2(2))
\end{equation}
are not surjective.
\end{thm}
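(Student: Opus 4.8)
The plan is to exhibit a class in $CH^2(X\times_k\overline k)^{\mathrm{Gal}(\overline k/k)}$ (respectively in $H^4_{\textrm{\'et}}(X,\mathbb Z_2(2))$) that does not come from $CH^2(X)$, by exploiting the behaviour of the quadric fibration over the non-smooth locus of the discriminant divisor. First I would analyse the geometry of the fibration $\pi\colon X\to\mathbb P^2_k$ built in Theorem 1.1: the generic fibre $Q$ is a $3$-dimensional quadric, so over a general point of $\mathbb P^2_k$ the fibre contains two rulings of planes, and over the discriminant locus (where the quadratic form $x_0^2-ax_1^2-fx_2^2+afx_3^2-g_1g_2x_4^2$ drops rank) the fibres degenerate. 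The key point is that the quadratic form factors: over $F(\sqrt f)=F(\sqrt{x/y})$ the form $x_0^2-ax_1^2-f(x_2^2-ax_3^2)$ becomes a norm form times a split part, so $Q$ acquires isotropic subspaces defined over quadratic extensions, and correspondingly $Q$ contains a conic bundle / lines defined only over $k(\sqrt a)$ or over ramified covers of $\mathbb P^2_k$. These produce codimension-$2$ cycles on $X\times_k\overline k$ whose classes are Galois-invariant (the Galois action permutes a pair of such cycles or fixes their sum) but whose would-be descent to $X$ is obstructed.

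The second, and central, step is to show that no class in $CH^2(X)$ maps to the chosen invariant class. Here I would use the comparison with the arithmetic of the quadric $Q$ over $F=k(\mathbb P^2_k)$: by the standard theory (Swan, Karpenko, and the Bloch–Ogus/Gersten analysis of cycles on quadric fibrations), a cycle on $X$ restricting to the relevant generator upstairs would force the existence of a zero-cycle of degree $1$, or of a suitable line, on $Q$ over $F$ itself, equivalently would force a certain class in the unramified cohomology $H^3_{nr}(F/k,\mathbb Z/2)$ or in a Chow group of the quadric to vanish. I would then compute this obstruction explicitly. This is exactly where the hypotheses enter: the condition that $L$ is in general position guarantees (via Theorem 1.1) that $X$ is smooth, so that $CH^2(X\times\overline k)$ and the cycle maps are the "correct" objects; and the condition $b_i,c_i,d_i\neq 0,-1$ ensures that the eight linear forms $l_i+h$ for $h\in H$, together with $x,y,z$, are genuinely distinct and in general position, so the divisor $\{g_1g_2=0\}$ has exactly the combinatorial structure (a union of $16$ lines meeting the coordinate lines transversally) needed for the local analysis at the singular points of the discriminant to produce a non-trivial obstruction rather than a trivial one. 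Concretely, I expect the relevant invariant in $H^3_{nr}(F,\mathbb Z/2)$ to be the symbol $(a)\cup(f)\cup(g_1g_2)$ or a residue thereof, which is non-zero precisely under these genericity hypotheses, as in the work of Pirutka \cite{Pi1} that this model realises.

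For the cycle class map (3), I would run the same argument $2$-adically: the image of $CH^2(X)\otimes\mathbb Z_2$ in $H^4_{\textrm{\'et}}(X,\mathbb Z_2(2))$ is contained in the subgroup of classes that are "$F$-rational" in the above sense, while the geometrically-defined cycle produces an integral $2$-adic étale class not in that subgroup; over a finite field $k$ this also makes (2) non-surjective after noting that $CH^2(X\times\overline k)^{\mathrm{Gal}}$ surjects onto (or maps with controlled cokernel to) the Galois-invariant étale classes by the known cases of the Tate conjecture for divisors on the geometric fibres together with a Leray spectral sequence argument for $\pi$. The main obstacle I anticipate is the explicit computation of the unramified cohomology / Chow-theoretic obstruction on the quadric $Q$ over $F$: one must carefully track ramification of the symbol along all $16+3$ components of the degeneration divisor and verify it does not die after passing to the regular model $X$, i.e. that the blow-ups performed in the proof of Theorem 1.1 do not kill the class. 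Controlling these residues on the regular model, and checking that the finite field $k$ can be taken large enough that everything is non-zero, is the heart of the matter; the descent and Leray arguments around it are comparatively formal.
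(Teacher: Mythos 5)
Your sketch is in the right circle of ideas (an obstruction living in unramified $H^3$, tied to a symbol built from $a$, $f$ and the $g_i$, as in Pirutka's work), but as written it has several concrete gaps. First, the symbol: $\delta(a)\cup\delta(f)\cup\delta(g_1g_2)$ is exactly the Arason class of the Pfister $3$-form of which the quadric $Q$ is a neighbour, so it is the one element guaranteed to \emph{die} in $H^3(F(Q),\mathbb Z/2\mathbb Z)$; the class the paper uses is $\delta(a)\cup\delta(f)\cup\delta(g_1)$, which survives and is unramified (this is where the general-position and $b_i,c_i,d_i\neq 0,-1$ hypotheses enter, via Proposition 3.6 of \cite{Pi1}). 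Second, you never supply the mechanism that converts a nonzero class in $H^3_{nr}(k(X)/k,\mathbb Z/2\mathbb Z)$ into non-surjectivity of the two maps. The paper does not construct any explicit Galois-invariant cycle on $X\times_k\overline k$; it instead quotes Theorem 2.1 of \cite{Pi1}, which identifies $H^3_{nr}(k(X)/k,\mathbb Q_2/\mathbb Z_2(2))[2]$ with the $2$-torsion of $\mathrm{Coker}[CH^2(X)\to CH^2(X\times_k\overline k)^{\mathrm{Gal}}]$, and Theorem 2.2 of \cite{CTK1} for the integral cycle class map. (Also, a minor geometric slip: a smooth quadric threefold in $\mathbb P^4$ contains no planes and has no ``two rulings''; that picture belongs to even-dimensional quadrics.)

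The most serious omission concerns the map (1.2.2). The isomorphism from \cite{CTK1} only identifies the torsion of $\mathrm{Coker}[CH^2(X)\otimes\mathbb Z_2\to H^4_{\textrm{\'et}}(X,\mathbb Z_2(2))]$ with $H^3_{nr}$ \emph{modulo its maximal divisible subgroup}, so exhibiting a nonzero unramified class is not enough: one must prove that $H^3_{nr}(k(X)/k,\mathbb Z/2\mathbb Z)$ is \emph{finite}, so that it has no divisible part. This finiteness is the bulk of Section 3 of the paper: one first shows (Arason for the kernel, Kahn--Rost--Sujatha for the cokernel) that $\eta^3_2:H^3(F,\mathbb Z/2\mathbb Z)\to H^3_{nr}(F(Q)/F,\mathbb Z/2\mathbb Z)$ has finite kernel and cokernel, then reduces to classes on $\mathbb P^2_k$ unramified outside the discriminant lines, and kills those using Kato's conjecture for surfaces over finite fields (Kerz--Saito) together with the local-global principle for quaternion algebras over the global fields $k(l)$, $l\in L$. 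Your alternative plan of ``directly'' producing a $2$-adic \'etale class and showing it is not algebraic provides no criterion for non-algebraicity, so this step cannot be carried out as proposed.
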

This theorem is an improvement of the main result of the paper \cite{Pi1} in two ways. First we construct a geometrically rational smooth projective variety such that the map in (1.2.1) is not surjective in any odd characteristic for some sufficiently large finite field. Moreover we also show that these varieties are counterexamples to the integral Tate conjecture, too. However the result above leaves the following question unanswered: are the cohomology classes in $H^4_{\textrm{\rm\'et}}(X\times_k\overline k,\mathbb Z_2(2))^{\mathrm{Gal}(\overline k/k)}$ which are not algebraic non-torsion, too? Previous counterexamples to the integral Tate conjecture (see \cite{CTSz} and \cite{T1}) were all torsion. However we will show that
\begin{thm} The cohomology group $H^3_{\textrm{\rm\'et}}(X\times_k\overline k,\mathbb Z_2(2))$ is trivial and the cohomology group $H^4_{\textrm{\rm\'et}}(X\times_k\overline k,\mathbb Z_2(2))$ has no torsion (where we use the same notations as in Theorem 1.2).
\end{thm}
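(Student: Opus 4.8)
The plan is to extract $H^3$ and $H^4$ of $\overline X=X\times_k\overline k$ from the Leray spectral sequence
\[ E_2^{p,q}=H^p_{\textrm{\'et}}(\mathbb P^2_{\overline k},R^q\overline\pi_*\mathbb Z_2(2))\ \Longrightarrow\ H^{p+q}_{\textrm{\'et}}(\overline X,\mathbb Z_2(2)) \]
of the quadric fibration $\overline\pi\colon\overline X\to\mathbb P^2_{\overline k}$. It is worth noting at the outset that the two assertions together amount to the single vanishing $H^3_{\textrm{\'et}}(\overline X,\mathbb Z/2)=0$: the universal coefficient sequence $0\to H^3(\overline X,\mathbb Z_2(2))/2\to H^3(\overline X,\mathbb Z/2)\to H^4(\overline X,\mathbb Z_2(2))[2]\to0$ together with Nakayama's lemma (everything in sight is finitely generated over $\mathbb Z_2$) shows that $H^3(\overline X,\mathbb Z/2)=0$ is equivalent to $H^3(\overline X,\mathbb Z_2(2))=0$ together with torsion freeness of $H^4(\overline X,\mathbb Z_2(2))$. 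One may therefore run the spectral sequence with $\mathbb Z/2$ coefficients throughout --- which turns every term into an $\mathbb F_2$--vector space and removes any need to track the $2$--adic differentials --- or keep $\mathbb Z_2(2)$ coefficients and check directly that the relevant differentials vanish. As a reality check, the $\overline k$--rationality of $\overline X$ (which follows from the $k(\sqrt a)$--rationality noted above) already gives $H^1_{\textrm{\'et}}(\overline X,\mathbb Z/2)=0$ and, since $\mathrm{Br}(\overline X)=0$, that $H^3_{\textrm{\'et}}(\overline X,\mathbb Z_2)$ is torsion free; but it does not give the vanishing of $H^3$ nor the torsion freeness of $H^4$ --- and since weak factorization is unavailable in positive characteristic one cannot simply compare $\overline X$ with $\mathbb P^2_{\overline k}\times\mathbb P^3_{\overline k}$ --- so the fibration structure is genuinely needed.

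The heart of the argument is the determination of the sheaves $R^q\overline\pi_*\mathbb Z_2$ for $q\le 4$ from the list of fibres of $\overline\pi$. Over the dense open $U\subseteq\mathbb P^2_{\overline k}$ where the fibre is a smooth three--dimensional quadric, $R^q\overline\pi_*\mathbb Z_2|_U=\mathbb Z_2(-q/2)$ for $q\in\{0,2,4\}$ and $R^q\overline\pi_*\mathbb Z_2|_U=0$ for $q$ odd; these are all constant, because the relevant classes (powers of the relative hyperplane class and the class of a line) are globally defined and the variety of lines on a three--dimensional quadric is irreducible, so there is no monodromy. Over the discriminant the fibre is a quadric of rank $\le4$, and over the finitely many worst points of the configuration $L$ it is the degenerate fibre produced by the resolution of Theorem 1.1. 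Using that resolution one checks that all of these fibres are \emph{cellular}: they are assembled from smooth quadrics, projective cones over quadrics, and projective bundles over such, glued along linear sections, so that their cohomology is torsion free and concentrated in even degrees. This gives $R^1\overline\pi_*\mathbb Z_2=R^3\overline\pi_*\mathbb Z_2=0$. The one delicate point concerns $R^2$: although a quadric cone over $\mathbb P^1\times\mathbb P^1$ has a class group of rank $2$ (the two rulings cone off to Weil divisors), its second cohomology group is still $\mathbb Z_2(-1)$, since the difference of the ruling classes is not $\mathbb Q$--Cartier and is invisible to $H^2$; carrying out the same check at the remaining degenerate fibres shows that $R^2\overline\pi_*\mathbb Z_2$ coincides with the constant sheaf $\mathbb Z_2(-1)$ outside a finite set, and in general equals that sheaf plus a skyscraper sheaf with torsion free stalks. (By contrast the rank does jump in $R^4\overline\pi_*\mathbb Z_2$ along the discriminant, which is exactly why the theorem is confined to $H^3$ and $H^4$.)

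Granting all this, the spectral sequence is immediate. In total degree $3$ every term vanishes: $E_2^{3,0}=H^3(\mathbb P^2_{\overline k},\mathbb Z_2)=0$, $E_2^{2,1}=E_2^{0,3}=0$ because $R^1\overline\pi_*\mathbb Z_2=R^3\overline\pi_*\mathbb Z_2=0$, and $E_2^{1,2}=H^1(\mathbb P^2_{\overline k},R^2\overline\pi_*\mathbb Z_2(2))=0$ because both the constant part $\mathbb Z_2(1)$ and the skyscraper part of $R^2\overline\pi_*\mathbb Z_2(2)$ have vanishing $H^1$ over $\mathbb P^2_{\overline k}$; hence $H^3_{\textrm{\'et}}(\overline X,\mathbb Z_2(2))=0$. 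In total degree $4$ the only possibly nonzero terms are $E_2^{4,0}=H^4(\mathbb P^2_{\overline k},\mathbb Z_2(2))$, $E_2^{2,2}=H^2(\mathbb P^2_{\overline k},R^2\overline\pi_*\mathbb Z_2(2))$ and $E_2^{0,4}=H^0(\mathbb P^2_{\overline k},R^4\overline\pi_*\mathbb Z_2(2))$: the first is $H^4$ of $\mathbb P^2_{\overline k}$ and the second is $H^2$ of $\mathbb P^2_{\overline k}$ with constant coefficients (the skyscraper part of $R^2$ contributing only in degree $0$), both torsion free, and the third is an extension of groups of global sections of constructible sheaves with torsion free stalks, hence also torsion free. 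Finally, every differential with one end among $E_2^{4,0},E_2^{2,2},E_2^{0,4}$ has its other end of the form $H^p(\mathbb P^2_{\overline k},R^q\overline\pi_*\mathbb Z_2)$ with $q\in\{1,3\}$, or with $q=2$ and $p\in\{1,3\}$, or with $q=0$ and $p\ge5$ --- each of which vanishes (the first class by $R^1=R^3=0$, the second by the structure of $R^2$, the third because $\mathbb P^2_{\overline k}$ has cohomological dimension $4$); so $H^4_{\textrm{\'et}}(\overline X,\mathbb Z_2(2))$ is an iterated extension of torsion free $\mathbb Z_2$--modules, hence torsion free.

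The real work --- and the main obstacle --- lies in the second paragraph: pinning down precisely the degenerate fibres of the explicit model of Theorem 1.1 over the bad points of $L$, and verifying that each of them is cellular with $H^2=\mathbb Z_2(-1)$. This is where the hypotheses that $L$ be in general position and that $b_i,c_i,d_i\ne0,-1$ enter; they confine the singularities of the associated quadric bundle to isolated ordinary quadratic ones, whose resolutions one can write down explicitly and whose fibres are manifestly cellular. Once this catalogue of fibres is in hand, everything above is formal.
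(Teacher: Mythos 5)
Your strategy---a single Leray spectral sequence for the composite fibration $\overline\pi\colon\overline X\to\mathbb P^2_{\overline k}$---is genuinely different from the paper's. The paper never computes $R^q\overline\pi_*$ for the regular model: it first runs Leray for the \emph{singular} fibration $\rho\colon Y\to\mathbb P^2_k$, whose fibres really are the quadrics $C_1,\dots,C_4$ of Section 5, and then tracks $H^{\le 4}$ through each blow-up separately, using the Leray spectral sequence of the blow-up map itself (Proposition 6.3 and the ``$q$-map'' formalism of Definition 6.4/Proposition 6.5). In that second spectral sequence the higher direct images are supported on the blow-up centres with stalks equal to the cohomology of quadrics, so the complicated reducible fibres of the final model never have to be analysed. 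Your reduction of the theorem to $H^3(\overline X,\mathbb Z/2)=0$ via universal coefficients is correct and the formal part of your spectral-sequence analysis is fine; the problem is entirely in your description of the sheaves $R^q\overline\pi_*\mathbb Z_2$.

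The concrete error is the assertion that $R^2\overline\pi_*\mathbb Z_2$ equals the constant sheaf outside a \emph{finite} set, resting on the belief that the resolution only modifies fibres over ``finitely many worst points of $L$.'' It does not: by Lemma 4.2 the singular locus of the naive model $Y$ contains the one-dimensional subscheme $Z$, which dominates the whole curve $\{xy=0\}\subset\mathbb P^2_k$, and the resolution blows up (the proper transform of) $Z$. Consequently, over \emph{every} point of $\{xy=0\}$ the fibre of $\overline\pi$ is reducible---the proper transform of a rank-$3$ quadric cone blown up at two points of its vertex line, together with two exceptional quadric threefolds---and a Mayer--Vietoris computation shows its $H^2$ has rank $3$, not $1$. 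So the cokernel of $\mathbb Z_2(-1)\to R^2\overline\pi_*\mathbb Z_2$ is a constructible sheaf supported on a curve, not a skyscraper. This breaks exactly the two terms your proof depends on: $E_2^{1,2}=H^1(\mathbb P^2_{\overline k},R^2\overline\pi_*\mathbb Z_2(2))$ (needed for $H^3=0$) and $E_2^{2,2}$ (needed torsion-free for $H^4$). For a sheaf on a nodal curve these groups can be nonzero, respectively have torsion---e.g.\ $H^1(\mathbb P^1,j_!\mathbb Z_2)\ne0$ for $j$ an open immersion, and $H^2$ of a rank-one sheaf with monodromy $-1$ has $2$-torsion---so you must pin down the local structure of $R^2\overline\pi_*$ at the nodes of $\{xy=0\}$, at its intersections with the lines of $K$, and at $[0:0:1]$, including the absence of monodromy on the rank-$3$ part (which comes from the splitting of $x_2^2=ax_3^2$ over $\overline k$). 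The same issue affects $R^4$ and the claim that all fibres are ``cellular.'' None of this is fatal to your approach, but it is the entire content of the theorem, and as written it is both unproved and, in the one place where you commit to a specific structure, false.
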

As the Hochshild--Serre spectral sequence
$$H^i(\mathrm{Gal}(\overline k/k),H^j_{\textrm{\rm\'et}}(X\times_k\overline k,\mathbb Z_2(2)))\Rightarrow H^{i+j}_{\textrm{\rm\'et}}(X,\mathbb Z_2(2))$$
furnishes a short exact sequence:
$$H^1(\mathrm{Gal}(\overline k/k),H^3_{\textrm{\rm\'et}}(X\times_k\overline k,\mathbb Z_2(2)))\rightarrow
H^4_{\textrm{\rm\'et}}(X,\mathbb Z_2(2))\rightarrow H^4_{\textrm{\rm\'et}}(X\times_k\overline k,\mathbb Z_2(2))^{\mathrm{Gal}(\overline k/k)},$$
we immediately get the following
\begin{cor} The cycle class map:
\begin{equation}
CH^2(X)\otimes\mathbb Z_2\rightarrow H^4_{\textrm{\rm\'et}}(X\times_k\overline k,\mathbb Z_2(2))^{\mathrm{Gal}(\overline k/k)}
\end{equation}
is not surjective and hence there are non-algebraic classes in the torsion-free group $H^4_{\textrm{\rm\'et}}(X\times_k\overline k,\mathbb Z_2(2))^{\mathrm{Gal}(\overline k/k)}$.
\end{cor}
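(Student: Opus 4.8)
The plan is to obtain the corollary almost immediately from Theorems 1.2 and 1.3 together with the short exact sequence displayed just above, which comes from the Hochschild--Serre spectral sequence; the argument is essentially bookkeeping, so I shall be brief about the routine points.

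First I would record the compatibility of the relevant cycle class maps. Let $\pi\colon X\times_k\overline k\to X$ be the projection. By functoriality of \'etale cycle classes, the geometric cycle class map $CH^2(X\times_k\overline k)\otimes\mathbb Z_2\to H^4_{\textrm{\rm\'et}}(X\times_k\overline k,\mathbb Z_2(2))$ precomposed with the pullback $\pi^*$ on Chow groups equals the pullback $\pi^*$ on cohomology precomposed with the cycle class map (1.2.2); moreover, since $X$ is defined over $k$, the image of $\pi^*$ on $H^4$ is contained in the $\mathrm{Gal}(\overline k/k)$-invariant subgroup, and the resulting homomorphism
$$\rho\colon H^4_{\textrm{\rm\'et}}(X,\mathbb Z_2(2))\longrightarrow H^4_{\textrm{\rm\'et}}(X\times_k\overline k,\mathbb Z_2(2))^{\mathrm{Gal}(\overline k/k)}$$
is precisely the edge homomorphism of the Hochschild--Serre spectral sequence, i.e.\ the second arrow of the displayed short exact sequence. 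In particular, the cycle class map appearing in the statement of the corollary equals the composite $\rho\circ(1.2.2)$.

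Next I would use Theorem 1.3. Since $H^3_{\textrm{\rm\'et}}(X\times_k\overline k,\mathbb Z_2(2))=0$, the leftmost term of the displayed short exact sequence is zero, so $\rho$ is injective; in fact, as $k$ is finite its absolute Galois group has cohomological dimension one, so the Hochschild--Serre spectral sequence has only the rows $i=0$ and $i=1$ and $\rho$ is even an isomorphism, although injectivity is all I shall use. By Theorem 1.2 the image of (1.2.2) is a proper subgroup of $H^4_{\textrm{\rm\'et}}(X,\mathbb Z_2(2))$; since $\rho$ is injective, the image of $\rho\circ(1.2.2)$ is a proper subgroup of $\rho\bigl(H^4_{\textrm{\rm\'et}}(X,\mathbb Z_2(2))\bigr)$, hence a proper subgroup of $H^4_{\textrm{\rm\'et}}(X\times_k\overline k,\mathbb Z_2(2))^{\mathrm{Gal}(\overline k/k)}$. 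As this composite is the cycle class map of the corollary, that map is not surjective.

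Finally, $H^4_{\textrm{\rm\'et}}(X\times_k\overline k,\mathbb Z_2(2))^{\mathrm{Gal}(\overline k/k)}$ is a subgroup of $H^4_{\textrm{\rm\'et}}(X\times_k\overline k,\mathbb Z_2(2))$, which has no torsion by Theorem 1.3, so it is itself torsion-free; since the algebraic classes constitute a proper subgroup of it by the previous paragraph, the corollary follows. I do not expect any genuine obstacle; the only point deserving care is the identification of the cycle class map of the corollary with $\rho\circ(1.2.2)$ and of $\rho$ with the edge homomorphism, both of which are standard functoriality properties of \'etale cohomology and of cycle class maps.
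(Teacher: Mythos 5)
Your argument is correct and is exactly the paper's intended one: the paper derives the corollary "immediately" from the displayed Hochschild--Serre exact sequence together with Theorems 1.2 and 1.3, using that the vanishing of $H^3_{\textrm{\rm\'et}}(X\times_k\overline k,\mathbb Z_2(2))$ makes the edge map injective and that the torsion-freeness statement passes to the invariant subgroup. Your fleshing-out of the compatibility of the cycle class maps and of the edge homomorphism is just the routine bookkeeping the paper leaves implicit.
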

The study of such quadratic fibrations has a venerable history (see \cite{AM} and \cite{CTOj}). It is important to note that the properties in Theorem 1.3 are particular to these geometrically rational $5$-folds. In fact in the last section we will show that there are smooth $5$-dimensional projective rational varieties over every algebraically closed field of odd characteristic which have $2$-torsion in its fourth dimensional \'etale cohomology (Proposition 6.7). 
\begin{cont} In the next section we show that there is a choice of $l_1$ and $l_2$ such that $L$ is in general position when the cardinality of $k$ is sufficiently large. In the third section we use unramified cohomology to prove Theorem 1.2, assuming the existence of a regular model. We construct such a model by writing down a quadratic fibration $Y$ over $\mathbb P^2_k$ whose generic fibre is $Q$ and by explicitly computing the blow-up of its singularities in the fourth section. Next we compute the \'etale cohomology of the quadratic threefolds which appear as fibres of $Y$ using rather standard tools. In the last section we prove Theorem 1.3 by a repeated application of the Leray-Serre spectral sequence, exploiting that the exceptional divisors of the blow-ups are quadratic hypersurfaces or families of quadratic hypersurfaces over projective lines.
\end{cont}
\begin{ackn} I wish to thank Alena Pirutka for suggesting to use the results of \cite{KRS1} to show the finiteness of $H^3_{nr}(k(X)/k,\mathbb Z/2\mathbb Z)$ in Theorem 3.3, and for calling my attention to the fact that this result with Theorem 2.2 of \cite{CTK1} implies the second half of Theorem 1.2. I also wish to thank Jean-Louis Colliot-Th\'el\`ene for some useful correspondence about this article. The author was partially supported by the EPSRC grant P36794.
\end{ackn}

\section{Lines in general position}

In all that follows we will denote linear forms and the lines defined by them by the same symbol, by slight abuse of notation. Recall that $H$ is the set of linear forms
$$h=e_xx+e_yy+e_zz$$
where $e_x,e_y,e_z\in\{0,1\}$. 
\begin{lemma} Assume that $k$ has odd characteristics. Then there is a non-empty Zariski-open set $U\subset\mathbb A_k^3\times
\mathbb A_k^3$ such that for every closed point $(l_1,l_2)$ of $U$ the set
$$L=\{x=0\}\cup\{y=0\}\cup\{l_1+h=0|h\in H\}\cup\{l_2+h=0|h\in H\}$$
is a set of lines and it is in general position.
\end{lemma}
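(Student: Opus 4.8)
The plan is to exhibit $U$ as the complement in the parameter space of a finite union of hypersurfaces, each cut out by a $3\times 3$ determinant or a $2\times 2$ minor, and then to check that none of these polynomials vanishes identically. Write $V=\mathbb A^3_k\times\mathbb A^3_k$ for the space with coordinates $(b_1,c_1,d_1,b_2,c_2,d_2)$; it is irreducible. Identifying a non-zero linear form in $x,y,z$ with its coefficient vector in $k^3$, recall that three such forms cut out three lines through a common point exactly when the $3\times 3$ determinant of their coefficient vectors vanishes, that two of them cut out the same line exactly when all three $2\times 2$ minors of the corresponding $2\times 3$ matrix vanish, and that each cuts out a line as long as it is non-zero. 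The eighteen forms occurring in $L$, namely $x$, $y$, and $l_i+h$ for $i\in\{1,2\}$ and $h\in H$, have coefficient vectors that are affine-linear in the coordinates on $V$. Hence the requirements that every $l_i+h$ be non-zero, that the eighteen lines be pairwise distinct, and that no three of them be concurrent together amount to the simultaneous non-vanishing of a finite family of polynomials $P_1,\dots,P_N\in k[b_1,\dots,d_2]$. If each $P_j$ is a non-zero polynomial, then $P_1\cdots P_N\neq 0$ in the domain $k[b_1,\dots,d_2]$, so $U:=V\setminus V(P_1\cdots P_N)$ is a non-empty Zariski-open subset of $V$, and by construction every closed point of $U$ gives a set $L$ of eighteen distinct lines in general position; so everything reduces to showing each $P_j$ is non-zero, equivalently to producing one point of $V$ (over some extension of $k$) avoiding all the $V(P_j)$.

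The distinctness minors are harmless: two lines from one pencil $\{l_i+h=0\}_{h\in H}$ coincide only when $h=h'$, while two lines from different pencils, or a moving line and one of $\{x=0\},\{y=0\}$, coincide only along a proper closed subset of $V$, as one sees by specialising one $l_i$ to a form not proportional to the other line while keeping the rest generic. For the concurrency determinants I would argue by how many of the two constant lines $\{x=0\},\{y=0\}$ occur in the triple. If both do, together with some $l_i+h$, the determinant is $\pm(d_i+e_z)$, a non-zero polynomial; in fact the hypothesis $d_i\neq 0,-1$ of Theorem 1.2 is precisely what makes it non-vanishing at the relevant points. If the triple consists of three moving lines $l_i+h,\,l_j+h',\,l_k+h''$ with the indices not all equal, then by pigeonhole two of the rows come from one of $l_1,\,l_2$ and are linearly independent for a generic value of that form (their difference $h-h'$ is a non-zero vector), while the third row is generic in the other three coordinates, so the determinant is not identically zero. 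If $i=j=k$, a common point of the three lines would lie on both $\{h-h'=0\}$ and $\{h-h''=0\}$, which are distinct lines because no line meets the set $\{0,1\}^3$ in three points; at their unique intersection point $P_0$ the remaining incidence $l_i(P_0)=-h(P_0)$ is a single non-trivial linear condition on the coefficients of $l_i$, so again the determinant is a non-zero polynomial.

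The delicate case, and the one I expect to be the main obstacle, is a triple containing exactly one of the constant lines together with two moving lines: here one must make sure that the determinant does not collapse identically to zero because of an accidental linear relation among the difference vectors $h-h'$ with $h,h'\in H$, which is precisely the kind of degeneration the general-position hypothesis is meant to forbid. Rather than inspect all such triples one at a time, I would finish with a single witness: choose the coefficients of $l_1$ and $l_2$ algebraically independent over the prime field (which is legitimate since we only need $U\neq\emptyset$, shrinking to a large finite subextension of $k$ if one prefers a closed point), so that no polynomial relation with coefficients in $\{-1,0,1\}$ of the shape produced above can hold unless it is trivial. This at once gives $P_j\neq 0$ for every $j$, and with the irreducibility of $V$ from the first paragraph it proves the lemma; Proposition 2.2 will then refine this by producing a genuinely $k$-rational point of $U$ once $\#k$ is large enough.
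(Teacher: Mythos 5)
Your overall framework --- realise $U$ as the complement of finitely many determinantal hypersurfaces in the irreducible space $\mathbb A^3_k\times\mathbb A^3_k$ and reduce to showing each determinant is a non-zero polynomial --- is essentially the paper's strategy, and your treatment of the triples with two constant lines, with three moving lines from distinct pencils, and with three moving lines from one pencil is sound. The step that fails is exactly the one you flag as delicate, and your proposed escape is circular: a polynomial $P_j$ with coefficients in the prime field is non-zero at a point whose coordinates are algebraically independent over the prime field \emph{if and only if} $P_j$ is non-zero as a polynomial. Evaluating at a ``transcendental witness'' therefore restates what has to be proved and gives no information whatsoever about the case you were worried about.

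Worse, in that case the determinant really is identically zero, so no witness can exist. Take $h=y$ and $h'=x+y$, both in $H$. For every $l_1$ the lines $\{l_1+y=0\}$ and $\{l_1+x+y=0\}$ meet at a point where their difference $x$ vanishes, i.e.\ on the line $\{x=0\}$, which also belongs to $L$; concretely, the coefficient matrix of the triple $x,\ l_1+y,\ l_1+x+y$ has first row $(1,0,0)$ and last two rows equal in the second and third columns, so its determinant is the zero polynomial. Hence these three elements of $L$ are concurrent for \emph{every} choice of $l_1$, your open set $U$ is empty, and the argument cannot be completed. Your instinct that a ``triple with one constant line and two moving lines whose difference $h-h'$ is proportional to the constant form'' is the main obstacle was exactly right --- but it is an obstacle to the statement, not merely to the proof, unless $\{x=0\}$ and $\{y=0\}$ are removed from $L$ or the general-position requirement is weakened. (The paper's own proof stumbles on the same configuration: its projective normalisation of the auxiliary claim forgets whether the difference of the two translated forms is proportional to the untranslated one, which is precisely this degeneration.)
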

\begin{proof} First we are going to prove the following similar claim. Let $R$ be three pair-wise non-propositional linear forms and let $T\subset R$ be a non-empty subset.  Then there is a non-empty Zariski-open set $V\subset\mathbb A_k^3$ such that for every closed point $l$ of $V$ the set
$$R'=\{l+h=0|h\in T\}\cup\{h=0|h\in R-T\}$$
is in general position. Since being in general position is an open condition, there is nothing to prove if $R$ is already in general position. Hence we may assume that the elements of $R$ intersect in one point. After a projective transformation we may assume that $R=\{x=0,y=0,x+y=0\}$ and $T$ is either $\{x=0\},\{x=0,y=0\}$ or $R$ itself. Let $l$ be the linear form $z$; in all three cases $R'$ is in general position with this choice. (Here we use that $k$ has odd characteristics). Since being in general position is an open condition, the claim is now clear. 

Now let us return to the proof of the claim in the lemma. Because being in general position is an open condition, and $\mathbb A_k^3\times
\mathbb A_k^3$ is irreducible, it will be enough to show that for every three element subset $R\subset L$ the set of $(l_1,l_2)\in\overline k^3\times\overline k^3$ such that $R$ is in general position is non-empty. If either $R\cap\{l_1+h=0|h\in H\}$ or $R\cap\{l_2+h=0|h\in H\}$ is empty then this claim follows immediately from the claim above. Otherwise $R$ is of the form $\{l=0,l_1+h=0,l_2+h'=0\}$ for some $h,h'\in H$. There is a choice of $l_1',l_2$ such that the linear forms $l,l_1'+h,l_2+h'$ are pair-wise non-propositional. Fix such an $l_2$; then there is a choice of $l_1=l_1'+l_1''$ such that $R$ is in general position by the claim proved above. 
\end{proof}
Let $U\subset\mathbb A_k^3\times\mathbb A_k^3$ denote the largest non-empty Zariski-open set such that for every closed point $(l_1,l_2)$ of $U$ the set
$$L=\{x=0\}\cup\{y=0\}\cup\{l_1+h=0|h\in H\}\cup\{l_2+h=0|h\in H\}$$
is a set of lines and it is in general position.
\begin{prop} There is a natural number $C$ such that for every field $k$ whose cardinality is at least $C$ the set:
$$U(k)\subseteq\{\big((b_1,c_1,d_1),(b_2,c_2,d_2)\big)\in k^3\times k^3|b_i,c_i,d_i\neq0,-1\textrm{ for $i =1,2$}\}$$
is non-empty.
\end{prop}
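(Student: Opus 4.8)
The plan is to combine Lemma 2.1 with a point-counting estimate. By Lemma 2.1 there is a non-empty Zariski-open set $U\subset\mathbb A_k^3\times\mathbb A_k^3$ over the prime field (or rather, defined by polynomial inequalities with integer coefficients, so that it makes sense over every field of odd characteristic), and what we must show is that $U(k)$ meets the open set where all six coordinates avoid the two forbidden values $0$ and $-1$. So let $Z\subset\mathbb A^6$ be the complement of $U$; it is a proper closed subset, cut out by a single nonzero polynomial $P$ whose coefficients and degree can be taken independent of $k$ (they depend only on the combinatorial recipe defining $L$, not on $k$). Let $D\subset\mathbb A^6$ be the union of the twelve hyperplanes $\{b_i=0\}$, $\{b_i=-1\}$, $\{c_i=0\}$, etc. The claim is that $(\mathbb A^6\setminus(Z\cup D))(k)\neq\emptyset$ once $|k|$ is large.

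First I would make the bound $C$ explicit enough to see it exists: the variety $\mathbb A^6\setminus D$ is, as a scheme over $\mathbb Z[1/2]$, isomorphic to $(\mathbb G_m\setminus\{-1\})^6$, i.e. six copies of $\mathbb P^1$ minus three points. Over a field $k$ with $|k|=q$ this has $(q-2)^6$ points. On the other hand the hypersurface $Z=\{P=0\}$, of degree $d$ (a fixed integer coming from Lemma 2.1), has at most $d\,q^5$ points over $k$ by the Schwartz--Zippel / Lang--Weil type bound for hypersurfaces in $\mathbb A^n$. Hence $\#(\mathbb A^6\setminus(Z\cup D))(k)\ge (q-2)^6-d\,q^5$, which is positive as soon as $q$ exceeds some constant $C$ depending only on $d$ — and $d$ does not depend on $k$. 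This gives the proposition.

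The one genuine subtlety, and the step I expect to need the most care, is the assertion that $U$ (equivalently, the polynomial $P$ cutting out its complement, together with its degree) can be chosen uniformly in $k$. Lemma 2.1 as stated produces $U$ for each fixed $k$ of odd characteristic, but inspecting its proof shows that the conditions defining "general position" for the finite set $L$ are the non-vanishing of finitely many $3\times 3$ determinants in the coefficients $b_i,c_i,d_i$ — these are fixed polynomials with $\mathbb Z$-coefficients — together with the non-vanishing of the pairwise "resultants" ensuring the forms are non-proportional, again fixed integer polynomials. So $Z$ is the zero locus of the product of all these polynomials, a single polynomial over $\mathbb Z$ whose degree $d$ is an absolute constant; its reduction mod $p$ is nonzero for every odd $p$ precisely because Lemma 2.1 guarantees $U\neq\emptyset$ over $\overline{\mathbb F}_p$. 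Once this uniformity is pinned down the counting argument above is routine, and I would present it with the explicit inequality $(q-2)^6 > d\,q^5$ so that $C$ is visibly effective.
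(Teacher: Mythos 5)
Your argument is correct and is essentially the paper's: the paper likewise observes that the complement of $U$ is a union of $\binom{18}{3}=816$ determinantal cubic hypersurfaces (defined by universal integer polynomials), and its Lemma~2.3 is exactly the Schwartz--Zippel-type bound $|H(k)|\le dn|k|^{n-1}$ you invoke, yielding $|U(k)|\ge |k|^6 - O(|k|^5)>0$ for large $|k|$. The only cosmetic difference is that you additionally intersect with the complement of the twelve hyperplanes $b_i,c_i,d_i\in\{0,-1\}$ and count $(q-2)^6$ points there, whereas the paper counts $|U(k)|$ directly and leaves implicit that general position of $L$ (via triples such as $\{y=0,\,l_i=0,\,l_i+z=0\}$) already forces those coordinate conditions.
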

\begin{proof} The claim is obvious when $k$ is infinite, so we may assume that $k$ is finite. Note that given three linear forms:
$$f_j=p_jx +q_jy+r_jz,\quad p_j,q_j,r_j\in k,j=1,2,3,$$
the set $\{f_1,f_2,f_3\}$ is in general position if and only if the determinant:
$$\begin{vmatrix}
p_1& q_1 & r_1 \\ p_2 & q_2 & r_2 \\ p_3 & q_3 & r_3
\end{vmatrix}$$
is non-zero. Therefore $U$ is the complement of $\binom{18}{3}=816$ hypersurfaces of degree $3$ in $\mathbb A^6_k$ and $64$ points. So by Lemma 2.3 below
$$|U|\geq|k|^6-64-816\cdot 18|k|^5.$$
The claim is now clear.
\end{proof}
\begin{lemma} Let $H\subset\mathbb A^n_k$ be hypersurface of degree $d$. Then 
$$|H(k)|\leq\min(|k|^n,dn|k|^{n-1}).$$
\end{lemma}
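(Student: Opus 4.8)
The first inequality, $|H(k)|\le|k|^n$, is trivial, since $H(k)$ is a subset of $\mathbb A^n(k)$, which has exactly $|k|^n$ elements. So the plan is to establish the bound $|H(k)|\le dn|k|^{n-1}$; this is a Schwartz--Zippel type estimate, which I would prove by induction on $n$. Fix a nonzero polynomial $f\in k[x_1,\dots,x_n]$ of degree $d$ with $H=V(f)$. For the base case $n=1$, a nonzero one-variable polynomial of degree $d$ has at most $d=d\cdot1\cdot|k|^0$ roots, so there is nothing to do.

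For the inductive step I would view $f$ as a polynomial in the single variable $x_n$ with coefficients in $k[x_1,\dots,x_{n-1}]$, writing $f=\sum_{i=0}^m c_i(x_1,\dots,x_{n-1})x_n^i$ with $c_m\neq0$. Since $d\geq1$ we may permute the coordinates so that $f$ genuinely involves $x_n$, i.e.\ $m\geq1$; note also $m\le d$ and $\deg c_m\le d-m\le d$. Now partition the points $a\in\mathbb A^{n-1}(k)$ into those with $c_m(a)\neq0$ and those with $c_m(a)=0$. In the first case $f(a,x_n)$ is a nonzero polynomial of degree $m\le d$ in $x_n$, so the fibre above $a$ contains at most $d$ points of $H(k)$; since there are at most $|k|^{n-1}$ such $a$, these fibres contribute at most $d|k|^{n-1}$ points. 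In the second case the fibre above $a$ contributes at most $|k|$ points, and by the inductive hypothesis applied to the hypersurface $\{c_m=0\}\subseteq\mathbb A^{n-1}_k$ of degree $\le d$, there are at most $d(n-1)|k|^{n-2}$ such $a$; hence these fibres contribute at most $d(n-1)|k|^{n-1}$ points. Adding the two contributions gives $|H(k)|\le d|k|^{n-1}+d(n-1)|k|^{n-1}=dn|k|^{n-1}$, completing the induction.

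The argument is entirely routine; the only things to keep an eye on are the degree bookkeeping --- checking that $\deg c_m\le d$ so that the inductive hypothesis genuinely applies to $\{c_m=0\}$ --- and the degenerate possibility that $f$ does not depend on any one fixed variable, which is ruled out once we use that $\deg f=d\geq1$ and permute coordinates appropriately. (If one wishes to allow $f$ to be constant or zero, the asserted bound follows at once from the first, trivial inequality.) So I do not expect any genuine obstacle here.
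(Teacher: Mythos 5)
Your proof is correct, and it is the classical Schwartz--Zippel induction; the paper runs the same induction through a dual decomposition, so it is worth noting the difference. You project to $\mathbb A^{n-1}_k$: over a base point $a$ where the leading coefficient $c_m$ of $f$ in $x_n$ does not vanish, the fibre meets $H$ in at most $d$ points, and the inductive hypothesis is applied to the hypersurface $\{c_m=0\}\subset\mathbb A^{n-1}_k$ to bound the number of bad base points. The paper instead projects to $\mathbb A^1_k$: it slices $H$ by the hyperplanes $\{x=c\}$, $c\in k$, and applies the inductive hypothesis to each slice $H_c\subset\mathbb A^{n-1}_k$, which is a hypersurface of degree at most $d$ except for the at most $d$ values of $c$ with $(x-c)\mid f$, where $H_c$ is the whole hyperplane. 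The two arguments yield the same two terms $d|k|^{n-1}+d(n-1)|k|^{n-1}$, with the roles of the degenerate and generic fibres interchanged. Your version has the minor advantage of not needing the paper's preliminary reduction to the case $dn<|k|$ (used there only to evaluate $\min(|k|^{n-1},d(n-1)|k|^{n-2})$, and dispensable since that minimum is always at most its second argument); the paper's version avoids having to permute coordinates so that $f$ genuinely involves the distinguished variable. Your degree bookkeeping ($m\ge1$, $\deg c_m\le d$) and your handling of the degenerate case of a constant $f$ are both in order.
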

\begin{proof} Without the loss of generality we may assume that $dn<|k|$. We are going to prove the claim by induction on $n$. The case $n=1$ is obvious. Assume now that the claim holds for $n-1$ and let $x$ be a coordinate function on $\mathbb A^n_k$. For every $c\in k$ let $H_c=H\cap\{x=c\}$. Then either $H_c\subset\mathbb A^{n-1}_k$ is a hypersurface of degree at most $d$ and hence
$$|H_c(k)|\leq\min(|k|^{n-1},d(n-1)|k|^{n-2})=d(n-1)|k|^{n-2},$$
by the induction hypothesis or $H_c$ is the hyperplane $\{x=c\}$. This is only possible for at most $d$ values of $c$ and hence
$$|H(k)|=\sum_{c\in k}|H_c(k)|\leq d|k|^{n-1}+|k|d(n-1)|k|^{n-2}=dn|k|^{n-1}.$$
\end{proof}

\section{Unramified cohomology}

\begin{defn} Let $X$ be a smooth, projective variety defined over a field $F$. Let $F(X)$ and $X(d)$ denote the function field of $X$ and the set of points of $X$ of codimension $d$, respectively. For every $x\in X(1)$ let $\mathcal O_{X,x}$ and $F_x$ denote the discrete valuation ring in $F(X)$ corresponding to $x$ and the residue field of $\mathcal O_{X,x}$, respectively. Let $i,j,n$ be natural numbers and assume that $\textrm{char}(F)\!\!\!\not\!|n$. The unramified cohomology group $H_{nr}^i(F(X)/F,\mu_n^{\otimes j})$ of $X$ over $F$ is by definition (see section 1.1 of \cite{CTK1} on page 6):
$$\begin{CD}
H_{nr}^i(F(X)/F,\mu_n^{\otimes j})=\textrm{Ker}
\big(H^i(F(X),\mu_n^{\otimes j})\!@>\oplus_{x\in X(1)}\partial_x>>
\!\bigoplus_{x\in X(1)}H^{i-1}(F_x,\mu_n^{\otimes(j-1)})\big)
\end{CD}$$
where
$$\partial_x:H^i(F(X),\mu_n^{\otimes j})\longrightarrow
H^{i-1}(F_x,\mu_n^{\otimes(j-1)})$$
is the residue map associated to the discrete valuation ring $\mathcal O_{X,x}$. Note that the image of the restriction map
$$H^i(F,\mu_n^{\otimes j})\longrightarrow 
H^i(F(X),\mu_n^{\otimes j})$$
of Galois cohomology actually lies in $H_{nr}^i(F(X)/F,\mu_n^{\otimes j})$. Let
$$\eta^{i,j}_n:H^i(F,\mu_n^{\otimes j})\longrightarrow 
H_{nr}^i(F(X)/F,\mu_n^{\otimes j})$$
be the corresponding map. When $n=2$ we will drop the unnecessary index $j$.
\end{defn}
Assume now that $F$ has odd characteristic and for every non-degenerate quadratic form $\phi$ of dimension $m$ defined over $F$ let $X_\phi$ denote the projective smooth quadric in $\mathbb P^{m-1}_F$ defined by $\phi$. 
\begin{thm} Let $\phi$ be a non-degenerate quadratic form:
$$x_0^2+a_1x^2_1+a_2x^2_2+a_1a_2x^2_3+a_3x^2_4$$
of dimension $5$, where $a_1,a_2,a_3\in F^*$. Then both the kernel and the cokernel of the map:
\begin{equation}
\eta^3_2:H^3(F,\mathbb Z/2\mathbb Z)\longrightarrow 
H^3_{nr}(F(X_{\phi})/F,\mathbb Z/2\mathbb Z)
\end{equation}
are finite.
\end{thm}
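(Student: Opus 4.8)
The plan is to analyze the map $\eta^3_2$ by means of the exact sequence relating the cohomology of $F$ with that of the function field $F(X_\phi)$, whose cokernel is controlled by the unramified classes coming from the quadric, together with the known structure of the kernel and cokernel of restriction along a quadric of this shape. First I would recall the Arason–Pfister / Arason exact sequence for low-degree cohomology of function fields of quadrics: for a Pfister neighbour the kernel of restriction $H^*(F,\mathbb Z/2)\to H^*(F(X_\phi),\mathbb Z/2)$ is the ideal generated by the associated Pfister form class, and the image of $\eta^3_2$ sits inside $H^3_{nr}$ with a cokernel described by a quotient of lower cohomology of $F$ (coming from the residues along the closed points of $X_\phi$, which are themselves quadrics of smaller dimension). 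The quadratic form $\phi = \langle 1, a_1, a_2, a_1a_2, a_3\rangle$ is a neighbour of the $3$-fold Pfister form $\langle\langle a_1, a_2, a_3\rangle\rangle$, and this structural fact is what makes the residue computation tractable.

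Next I would identify the kernel of $\eta^3_2$ with the kernel of $H^3(F,\mathbb Z/2)\to H^3(F(X_\phi),\mathbb Z/2)$ composed with the unramified inclusion, which by Arason's theorem equals $\{0, (a_1)\cup(a_2)\cup(a_3)\} \cdot$ (possibly a twist), in any case a group killed by the symbol $(a_1,a_2,a_3)$, hence at most of order $2$ — in particular finite. For the cokernel I would use the localization/residue exact sequence defining $H^3_{nr}(F(X_\phi)/F,\mathbb Z/2)$: a class in the unramified group that is not in the image of $\eta^3_2$ gives, via passage to the generic point and the theory of quadratic forms over $F(X_\phi)$, a class measured by the second residue along the quadric, and the relevant receptacle is a subquotient of $H^2$ and $H^1$ of the residue fields $F_x$, which for the low-codimension points reduce to cohomology of function fields of quadrics of dimension $3$ and $4$ over $F$. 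Invoking the results of \cite{KRS1} on such function fields — this is precisely the input Pirutka suggested — one sees that the relevant contribution is a subquotient of a finitely generated piece, or more directly that $H^3_{nr}(F(X_\phi)/F,\mathbb Z/2)/\mathrm{im}(\eta^3_2)$ embeds into a finite group.

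The main obstacle, and the step I expect to require the most care, is controlling the cokernel: one must show that the unramified $H^3$ of the function field of this particular $5$-dimensional quadric is not much bigger than $H^3(F,\mathbb Z/2)$ itself. The key geometric input is that $X_\phi$, being a Pfister neighbour quadric, becomes split (indeed rational) over the quadratic extension trivializing one of the slots, so a transfer argument bounds the cokernel by the corresponding relative term; combined with the finiteness statements for unramified cohomology of quadrics of smaller dimension from \cite{KRS1}, this forces both kernel and cokernel of $\eta^3_2$ to be finite. I would organize the argument as: (i) reduce to the Pfister-neighbour case and record the splitting field; (ii) handle the kernel via Arason's computation of $H^3$-kernels for quadrics; (iii) handle the cokernel by a residue-exact-sequence argument feeding into \cite{KRS1}; (iv) conclude.
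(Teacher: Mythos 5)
Your identification of $\phi$ as a neighbour of the $3$-fold Pfister form $\langle1,a_1\rangle\otimes\langle1,a_2\rangle\otimes\langle1,a_3\rangle$ and your treatment of the kernel via Arason's theorem (order at most $2$, generated by the symbol $(a_1)\cup(a_2)\cup(a_3)$) is exactly the paper's argument, and you also reach for the correct reference \cite{KRS1} for the cokernel. So the overall skeleton matches.

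However, the mechanism you describe for the cokernel has a genuine gap. The theorem is stated for an arbitrary field $F$ of odd characteristic, so no finiteness can be extracted from the receptacles you propose: the groups $H^2(F_x,\mathbb Z/2\mathbb Z)$ and $H^1(F_x,\mathbb Z/2\mathbb Z)$ of residue fields, and $H^2(F,\mathbb Z/2\mathbb Z)$ itself, are typically infinite, so ``a subquotient of a finitely generated piece'' is not something the residue exact sequence hands you. Likewise the transfer argument you sketch (the quadric splits over a quadratic extension, e.g.\ $F(\sqrt{-a_1})$, so restriction--corestriction kills the cokernel by $2$) only shows the cokernel is $2$-torsion, and $2$-torsion groups can be infinite. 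The input that actually closes the argument is the precise statement of Proposition 3 and Theorem 5 of \cite{KRS1}: for a Pfister neighbour of this type the group $\mathrm{Coker}(\eta^3_2)$ is isomorphic to a \emph{subgroup of} $\mathrm{Ker}(\eta^3_2)$, whence finiteness follows immediately from the Arason bound on the kernel. Your write-up never pins down this identification, and without it none of the routes you outline delivers the conclusion; with it, steps (iii) and (iv) of your plan collapse to a one-line citation.
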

\begin{proof} Note that $\phi$ is the neighbour of the Pfister $3$-form $\langle1,a_1\rangle\otimes\langle1,a_2\rangle\otimes\langle1,a_3\rangle$. Therefore the order of Ker$(\eta^3_2)$ is at most two by a theorem of Arason (see Th\'eor\`eme 1.8 of \cite{CTOj} on page 147). Moreover by Proposition 3 (page 845) and Theorem 5 (page 846) of \cite{KRS1} the group Coker$(\eta^3_2)$ is isomorphic to a subgroup of Ker$(\eta^3_2)$, and hence it is also finite.
\end{proof}
Recall that $Q$ is the smooth, projective quadric over the function field of $\mathbb P^2_k$ defined by the homogeneous equation (1.0.1).
\begin{thm} Assume that $k$ is a finite field. Also suppose that $b_i,c_i,d_i\neq0,-1$ for $i =1,2$, the set of lines $L$ is in general position, and let $X$ be a projective smooth variety over $k$ which has a fibration over $\mathbb P^2_k$ whose generic fibre is the quadric $Q$. Then the cohomology group $H^3_{nr}(k(X)/k,\mathbb Z/2\mathbb Z)$ is finite and non-zero.
\end{thm}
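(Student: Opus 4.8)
The statement has two parts, and only the non\nobreakdash-triviality involves the model, since $H^3_{nr}(k(X)/k,\mathbb Z/2\mathbb Z)$ depends only on the field $k(X)=F(Q)$, with $F=k(\mathbb P^2_k)$; I would deduce it as in \cite{Pi1}. One exhibits a symbol of the shape $\alpha=(a)\cup(x/y)\cup(g_1)\in H^3(F,\mathbb Z/2\mathbb Z)$ and shows that its image in $H^3(F(Q),\mathbb Z/2\mathbb Z)$ is unramified on $X$ and non-zero. A residue computation shows that $\alpha$ is unramified on $\mathbb P^2_k$ away from the eight lines $l_1+h=0$, $h\in H$: modulo each coordinate line $\{x=0\}$, $\{y=0\}$, $\{z=0\}$ the factor of $g_1$ indexed by the variable absent from that line is repeated, so $g_1$ becomes, up to a power of $y$, a square in the residue field and the would-be residue vanishes; and along $l_1+h=0$ the residue of $\alpha$ equals the quaternion class $(a,\overline{x/y})$, which generates the kernel of restriction to the function field of the $2$-Pfister quadric occurring as the reduced fibre there, and so dies after pull-back to $X$. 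Unramifiedness along the exceptional divisors over the intersection points of these lines, which are pairwise non-concurrent by the general position hypothesis, is then read off from the explicit blow-ups of the fourth section. Finally $\alpha\neq 0$ in $H^3(F,\mathbb Z/2\mathbb Z)$ because it has a non-trivial residue, and by Arason's theorem the image of $\alpha$ in $H^3(F(Q),\mathbb Z/2\mathbb Z)$ is non-zero as soon as both $(a,x/y,g_1)$ and $(a,x/y,g_2)$ are non-zero, which one checks by specialising at a well-chosen closed point of $\mathbb P^2_k$, using the general position hypothesis and the conditions $b_i,c_i,d_i\neq 0,-1$.

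For the finiteness let $\pi\colon X\to\mathbb P^2_k$ be as in Theorem 1.1, and let $\xi\in H^3_{nr}(k(X)/k,\mathbb Z/2\mathbb Z)$. Then $\xi$ is unramified at the codimension\nobreakdash-one points of $X$ lying over the generic point of $\mathbb P^2_k$, and these are precisely the codimension\nobreakdash-one points of $Q$, so $\xi\in H^3_{nr}(F(Q)/F,\mathbb Z/2\mathbb Z)$. Since $\phi$ has the shape treated in Theorem 3.2, this group is an extension of the finite group $\mathrm{Coker}(\eta^3_2)$ by $H^3(F,\mathbb Z/2\mathbb Z)/\mathrm{Ker}(\eta^3_2)$ with $|\mathrm{Ker}(\eta^3_2)|\le 2$; hence it is enough to prove that the set $\Sigma$ of $\zeta\in H^3(F,\mathbb Z/2\mathbb Z)$ whose image in $H^3(F(Q),\mathbb Z/2\mathbb Z)$ is unramified on $X$ is finite.

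To bound $\Sigma$ I would control the residues of such a $\zeta$ on $\mathbb P^2_k$. Fix a curve $C\subset\mathbb P^2_k$. After rescaling variables the coefficients of $\phi$ all have valuation $0$ or $1$ at $C$, so a suitable integral model of $Y$ over the local ring of $C$ has reduced special fibre, and there is a component $D_C$ of $\pi^{-1}(C)$ dominating $C$ with $e(D_C/C)=1$ and $k(D_C)$ a purely transcendental extension of $k(C)(X_{\phi_C})$, where $\phi_C$ denotes the first residue form of $\phi$ at $C$. Compatibility of residues then gives $\partial_{D_C}(\zeta|_{F(Q)})=\mathrm{res}_{k(C)\to k(D_C)}(\partial_C\zeta)$, so unramifiedness of $\zeta|_{F(Q)}$ forces $\partial_C\zeta$ into the kernel of $H^2(k(C),\mathbb Z/2\mathbb Z)\to H^2(k(C)(X_{\phi_C}),\mathbb Z/2\mathbb Z)$. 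If $C$ is not one of the sixteen lines $l_i+h=0$, then $\phi_C$ is isotropic — it has dimension $5$ over the $C_2$-field $k(C)$, or $C\in\{x=0,y=0\}$ and $\phi_C\cong\langle 1,-a,-1\rangle$ by the square phenomenon above — so $X_{\phi_C}$ is $k(C)$-rational and $\partial_C\zeta=0$; and if $C$ is one of those sixteen lines then $\phi_C$ is the $2$-Pfister form $\langle 1,-a,-\overline{x/y},a\overline{x/y}\rangle$ and the kernel above has order at most $2$ by Arason's theorem. Thus $(\partial_C\zeta)_C$ is supported on at most sixteen lines and takes at most $2^{16}$ values; two classes with the same residues differ by an element of $H^3_{nr}(\mathbb P^2_k/k,\mathbb Z/2\mathbb Z)=H^3(k,\mathbb Z/2\mathbb Z)=0$, whence $|\Sigma|\le 2^{16}$ and the finiteness follows.

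The main obstacle is the case analysis of the first residue form $\phi_C$ along the degeneration locus of $\pi$, and in particular along $\{x=0\}$ and $\{y=0\}$: here two coefficients of $\phi$ acquire odd valuation, the fibre of $Y$ is a quadric cone over a conic rather than a smooth quadric, and one must verify that this conic becomes isotropic because the relevant product over $H$ degenerates to a square in the residue field. For the non\nobreakdash-triviality half the corresponding difficulty is the verification that $\alpha$ stays unramified along the exceptional divisors over the singular points of the degeneration locus, which is precisely where the explicit resolution of the fourth section is needed.
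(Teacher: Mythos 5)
Your proposal is correct, and while the non-triviality half coincides with the paper's (both reduce to the residue computation of Proposition 3.6 of \cite{Pi1} together with Arason's description of $\mathrm{Ker}(H^3(F,\mathbb Z/2\mathbb Z)\to H^3(F(Q),\mathbb Z/2\mathbb Z))$ for a Pfister neighbour), the finiteness half takes a genuinely different route. The paper first shows (Lemma 3.4, via the commutative square from \cite{CTOj} and the injectivity statement of \cite{KRS1}) only that classes in the preimage $M$ have vanishing residues away from the discriminant locus, and then bounds the resulting group $N$ by embedding $H^3(F,\mathbb Z/2\mathbb Z)$ into $\bigoplus_{x}H^2(k_x,\mathbb Z/2\mathbb Z)$ via the Kerz--Saito exact sequence (Kato's conjecture for surfaces) and invoking the local--global principle for quaternion algebras over the global fields $k(l)$ to show each $H^2_{l\cap I}(k(l),\mathbb Z/2\mathbb Z)$ is finite. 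You instead pin down the residue along every curve $C$ directly: it vanishes away from the sixteen lines $l_i+h=0$ (using that $k(C)$ is $C_2$ for the good-reduction curves and the repeated-factor square phenomenon on the coordinate lines), and it is confined to the order-two Arason kernel of the $2$-Pfister quadric along each of the sixteen lines; you then conclude by the injectivity of the total residue map on $H^3(F,\mathbb Z/2\mathbb Z)$, which for $F=k(\mathbb P^2_k)$ follows from the elementary identity $H^3_{nr}(k(\mathbb P^2_k)/k,\mathbb Z/2\mathbb Z)=H^3(k,\mathbb Z/2\mathbb Z)=0$ rather than from the full Kerz--Saito theorem. This buys you an explicit bound ($|\Sigma|\le 2^{16}$, hence an explicit bound on $H^3_{nr}(k(X)/k,\mathbb Z/2\mathbb Z)$ after correcting by the kernel and cokernel of $\eta^3_2$), which is precisely the refinement the paper says its argument ``can be refined to give''; the price is that you must verify from the explicit model of Section 4 that over the generic point of each degenerate line there is a multiplicity-one component whose residue field is purely transcendental over $k(C)(X_{\phi_C})$ --- true here because those fibres are cones over the relevant quadrics --- whereas the paper's cruder Lemma 3.4 avoids any analysis of the bad fibres at the cost of the deeper exactness input.
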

Jean-Louis Colliot-Th\'el\`ene informed me that he can easily deduce the first half of this theorem in bigger generality from the results of \cite{CTK1}. However I prefer to give an independent proof since it can be refined to give an easily computable upper bound. 
\begin{proof} Let $F$ be the function field of $\mathbb P^2_k$. Consider the image $\xi$ of the cup product:
$$\delta(a)\cup\delta(f)\cup\delta(g_1)\in H^3(F,\mathbb Z/2\mathbb Z)$$
with respect to $\eta^3_2$ in $H^3(F(Q),\mathbb Z/2\mathbb Z)=H^3(k(X),\mathbb Z/2\mathbb Z)$ where for every $e\in F^*$ we let $\delta(e)$ denote its class in $H^1(F,\mathbb Z/2\mathbb Z)=F^*/(F^*)^2$. The argument of Proposition 3.6 of \cite{Pi1} shows without any modification that $\xi$ is non-zero and lies in $H^3_{nr}(k(X)/k,\mathbb Z/2\mathbb Z)$. (Note that the conditions on page 814 of \cite{Pi1} are satisfied, as we assumed that $L$ is in general position.) Therefore the latter is non-zero. So we only have to show that $H^3_{nr}(k(X)/k,\mathbb Z/2\mathbb Z)$ is finite. By Theorem 3.2 it will be enough to show that the pre-image $M\subset H^3(F,\mathbb Z/2\mathbb Z)$ of the subgroup
$$H^3_{nr}(k(X)/k,\mathbb Z/2\mathbb Z)\subset H^3_{nr}(F(Q)/F,\mathbb Z/2\mathbb Z)$$
with respect to the map $\eta^3_2$ in (3.2.1) is finite. We will need the following\noqed
\end{proof}
\begin{lemma} Let $N\subset\! H^3(F,\mathbb Z/2\mathbb Z)$ be the subgroup of all elements $\zeta\in\! H^3(F,\mathbb Z/2\mathbb Z)$ such that $\partial_x(\zeta)=0$ for every $x\in\mathbb P^2_k(1)$ which does not lie on the discriminant locus of $Q$. Then $M\subseteq N$.
\end{lemma}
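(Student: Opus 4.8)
The strategy is to combine good reduction of $Q$ away from the discriminant locus with the compatibility of residue maps and with Arason's theorem. The one input that makes this work is the following: since $X$ is smooth and projective, the Gersten resolution for \'etale cohomology of smooth varieties (Bloch--Ogus) shows that $H^3_{nr}(k(X)/k,\mathbb Z/2\mathbb Z)$ is exactly the subgroup of $H^3(k(X),\mathbb Z/2\mathbb Z)$ of classes unramified with respect to \emph{every} discrete valuation of $k(X)$ trivial on $k$, not merely those centred in codimension one on the given model $X$. Indeed, a class unramified along all divisors of $X$ lies in the image of $H^3(\mathcal O_{X,z},\mathbb Z/2\mathbb Z)$, where $z$ is the centre on $X$ of a given valuation $w$ (which exists by the valuative criterion of properness), hence in the image of $H^3$ of the valuation ring of $w$, so $\partial_w$ kills it. Thus for $\zeta\in M$ the class $\eta^3_2(\zeta)=\zeta|_{k(X)}$ is unramified at every discrete valuation of $k(X)/k$.

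Now fix $x\in\mathbb P^2_k(1)$ not on the discriminant locus of $Q$. By definition of the discriminant locus, $Q$ has good reduction at $x$: the coefficients of (1.0.1) can be taken to have even valuation in $\mathcal O_{\mathbb P^2_k,x}$, so $Q$ extends to a smooth projective quadric $\mathcal Q$ over $\mathcal O_{\mathbb P^2_k,x}$ whose special fibre $Q_x$ is the smooth quadric over $\kappa(x)$ attached to a non-degenerate form of the type occurring in Theorem 3.2, in particular a Pfister neighbour of the corresponding $3$-fold Pfister form $\langle 1,a_1\rangle\otimes\langle 1,a_2\rangle\otimes\langle 1,a_3\rangle$. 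The generic point of $Q_x$ is a codimension-one point of the regular scheme $\mathcal Q$, hence defines a discrete valuation $w$ of $k(X)=F(Q)$; since $Q_x$ is reduced and, being a smooth quadric of positive dimension, geometrically integral, it is cut out by a uniformizer of $\mathcal O_{\mathbb P^2_k,x}$, so $w$ restricts to the valuation attached to $x$ on $F$ with ramification index one and has residue field $\kappa(x)(Q_x)$.

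To conclude: by the first paragraph $\partial_w(\eta^3_2(\zeta))=0$, and by the compatibility of residue maps under an extension of discrete valuation rings of ramification index one, the square relating $\partial_x$, $\partial_w$ and the restriction maps $H^3(F,\mathbb Z/2\mathbb Z)\to H^3(F(Q),\mathbb Z/2\mathbb Z)$ and $H^2(\kappa(x),\mathbb Z/2\mathbb Z)\to H^2(\kappa(x)(Q_x),\mathbb Z/2\mathbb Z)$ commutes; hence the image of $\partial_x(\zeta)$ in $H^2(\kappa(x)(Q_x),\mathbb Z/2\mathbb Z)$ is zero. Finally, since $Q_x$ is the quadric of a Pfister neighbour of a $3$-fold Pfister form, Arason's theorem (the same input used for Theorem 3.2, now in cohomological degree $2$) identifies the kernel of $H^2(\kappa(x),\mathbb Z/2\mathbb Z)\to H^2(\kappa(x)(Q_x),\mathbb Z/2\mathbb Z)$ with the cup product of the degree-three Arason symbol of that Pfister form with $H^{-1}(\kappa(x),\mathbb Z/2\mathbb Z)=0$, so the kernel is trivial; alternatively, when $Q_x$ is isotropic over $\kappa(x)$ the injectivity is immediate from the existence of a $\kappa(x)$-point. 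Therefore $\partial_x(\zeta)=0$, and as $x$ was an arbitrary codimension-one point off the discriminant locus we get $\zeta\in N$, i.e. $M\subseteq N$.

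The step I expect to require the most care is the reduction carried out in the first paragraph: unramifiedness of $\eta^3_2(\zeta)$ was assumed only along the divisors of the particular model $X$, whereas it is needed at the good-reduction valuation $w$, whose centre on $X$ need not be a divisor (think of the valuation attached to the exceptional divisor of a blow-up, seen on the unblown model). This is precisely where the smoothness and properness of $X$ enter, through the birational invariance of unramified cohomology supplied by Bloch--Ogus--Gersten; everything else is either the elementary valuation-theoretic bookkeeping identifying the bad-reduction locus with the discriminant locus, or a citation of Arason's theorem.
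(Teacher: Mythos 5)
Your proof is correct and follows essentially the same route as the paper: a commutative square relating $\partial_x$ to the residue at a place of $F(Q)$ of good reduction over $x$, followed by the injectivity of $H^2(k_x,\mathbb Z/2\mathbb Z)\rightarrow H^2(k_x(Q_x),\mathbb Z/2\mathbb Z)$ for the reduced smooth quadric (the paper cites part (4) of Corollary 2 of \cite{KRS1} for exactly the Arason-type statement you invoke). The only divergence is that the paper takes the relevant valuation to be the one attached to the codimension-one point of $X$ above $x$, so that unramifiedness is immediate from the definition of $M$, whereas you construct it from a good-reduction quadric over $\mathcal O_{\mathbb P^2_k,x}$ and then need the Bloch--Ogus/Gersten birational-invariance step to transfer unramifiedness; your version is the more careful one for an arbitrary smooth proper model (where the fibre over $x$ could a priori fail to give a multiplicity-one divisor with the expected residue field), while the paper's is shorter because its model is smooth with integral fibres over points off the discriminant locus.
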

\begin{proof} Let $\zeta\in M$ and let $x$ be as above. Let $y\in X(1)$ be the codimension $1$ point of $X$ lying above $x$. There is a commutative diagram:
$$\begin{CD}
H^3(F,\mathbb Z/2\mathbb Z)@>\partial_x>> H^2(k_x,\mathbb Z/2\mathbb Z)\\
@V\eta^3_2VV@VV\eta^2_2V\\
H^3(F(Q),\mathbb Z/2\mathbb Z)@>\partial_y>> H^2(k_y,\mathbb Z/2\mathbb Z).\\
\end{CD}$$
(See page 143 of \cite{CTOj} for an explanation.) Therefore $\eta^2_2\circ\partial_x(\zeta)=0$. But the reduction of $Q$ onto $x$ is a non-degenerate quadric by assumption, so the homomorphism $\eta^2_2:H^2(k_x,\mathbb Z/2\mathbb Z)\rightarrow H^2(k_y,\mathbb Z/2\mathbb Z)$ is injective by part (4) of Corollary 2 of \cite{KRS1} on page 844. 
\end{proof}
\begin{proof}[End of the proof of Theorem 3.3] By Lemma 3.4 it will be sufficient to prove that $N$ is finite. Let $Y$ be a smooth, geometrically integral projective surface over the finite field $k$. Then there is an exact sequence:
$$\begin{CD}
0@>>>H^3(k(Y),\mathbb Z/2\mathbb Z)
@>\bigoplus_{x\in Y(1)}\partial_x>>
\bigoplus_{x\in Y(1)}H^2(k_x,\mathbb Z/2\mathbb Z)\end{CD}$$
$$\begin{CD}@>\sum_{x\in Y(1)}\bigoplus_{y\in\overline x\cap Y(2)}\partial_y>>
\bigoplus_{y\in Y(2)}H^1(k_y,\mathbb Z/2\mathbb Z)
@>>>\mathbb Z/2\mathbb Z\rightarrow 0.\end{CD}$$
by Theorem 0.4 of \cite{KeSa} on page 3 (see also \cite{CTSaSo}). (This is a special case of a more general result, formally known as Kato's conjecture.) For every smooth, geometrically integral projective curve $C$ and for every finite set $S\subset C(1)$ let
$$H^2_S(k(C),\mathbb Z/2\mathbb Z)=\bigcap_{y\in C(1)-S}\!\!\!\!\!\!
\textrm{Ker}(\partial_y)\ \ \subseteq H^2(k(C),\mathbb Z/2\mathbb Z).$$
The group $H^2_S(k(C),\mathbb Z/2\mathbb Z)$ is finite by the local-global principle for quaternion algebras over global fields. By the above there is an exact sequence:
$$\begin{CD}
0\rightarrow N
@>\bigoplus_{l\in L}\partial_l>>
\bigoplus_{l\in L}H^2_{l\cap I}(k(l),\mathbb Z/2\mathbb Z)@>\sum_{l\in L}\bigoplus_{y\in l\cap I}\partial_y>>
\bigoplus_{y\in I}H^1(k_y,\mathbb Z/2\mathbb Z),\end{CD}$$
where $I$ is the set of intersection points of any two elements of $L$. Because the set $L$ is finite, the claim is now clear.
\end{proof}
\begin{proof}[Proof of Theorem 1.2] Let $Y$ be a smooth, projective variety defined over a field $F$. Let $i,j$ be natural numbers and let $l\neq\textrm{char}(F)$ be a prime number. Let
$H_{nr}^i(F(Y)/F,\mathbb Q_l/\mathbb Z_l(j))$ denote the direct limit:
$$H_{nr}^i(F(Y)/F,\mathbb Q_l/\mathbb Z_l(j))=\varinjlim_{n=1}^{\infty}H_{nr}^i(F(Y)/F,\mu_{l^n}^{\otimes j}).$$
Since over the quadratic field extension $k'=k(\sqrt a)$ the variety $X$ becomes rational, we have $H_{nr}^3(k'(X)/k',\mathbb Q_2/\mathbb Z_2(2))=0$. Therefore $H_{nr}^3(k(X)/k,\mathbb Q_2/\mathbb Z_2(2))$ lies in the kernel of the restriction map:
$$H^3(k(X),\mathbb Q_2/\mathbb Z_2(2))\longrightarrow
H^3(k'(X),\mathbb Q_2/\mathbb Z_2(2)),$$
so the group $H_{nr}^3(k(X)/k,\mathbb Q_2/\mathbb Z_2(2))$ is two-torsion. Also note that
$$H_{nr}^3(k(X)/k,\mathbb Q_2/\mathbb Z_2(2))[2]=
H^3_{nr}(k(X)/k,\mathbb Z/2\mathbb Z)$$
by the Merkurjev-Suslin theorem (see Remark 5.6 of \cite{CTK1} on page 29). So we conclude that
$$H_{nr}^3(k(X)/k,\mathbb Q_2/\mathbb Z_2(2))=
H_{nr}^3(k(X)/k,\mathbb Q_2/\mathbb Z_2(2))[2]=
H^3_{nr}(k(X)/k,\mathbb Z/2\mathbb Z).$$
By Theorem 2.1 of \cite{Pi1} on page 806 there is an isomorphism:
$$H^3_{nr}(k(X)/k,\mathbb Q_2/\mathbb Z_2(2))[2]\cong
\mathrm{Coker}[CH^2(X)\rightarrow CH^2(X\times_k\overline k)^{\mathrm{Gal}(\overline k/k)}][2],$$
and hence the map in (1.2.1) is not surjective by Theorem 3.3. By Theorem 2.2 of \cite{CTK1} on page 10 we know that
$$H^3_{nr}(k(X)/k,\mathbb Q_2/\mathbb Z_2(2))^{codiv}\cong\mathrm{Coker}[CH^2(X)\otimes\mathbb Z_2\rightarrow H^4_{\textrm{\rm\'et}}(X,\mathbb Z_2(2))]_{tor}$$
where for every abelian group $M$ we let $M^{codiv}$ and $M_{tor}$ denote the quotient of $M$ modulo its maximal divisible subgroup and the maximal torsion subgroup of $M$, respectively. Therefore
$$H^3_{nr}(k(X)/k,\mathbb Z/2\mathbb Z)^{codiv}\cong\mathrm{Coker}[CH^2(X)\otimes\mathbb Z_2\rightarrow H^4_{\textrm{\rm\'et}}(X,\mathbb Z_2(2))]_{tor}.$$
Since $H^3_{nr}(k(X)/k,\mathbb Z/2\mathbb Z)$ is finite by Theorem 3.3 we get that its maximal divisible subgroup is trivial, and hence map in (1.2.2) is not surjective either. 
\end{proof}

\section{Construction of the regular model}

Let $U_x,U_y,U_z\subset\mathbb P^2_k$ be the open subschemes:
$$U_x=\{x\neq0\},\quad U_y=\{y\neq0\},\quad U_z=\{z\neq0\}.$$
Let $Q_x,Q_y,Q_z$ be the quadrics on $U_x,U_y,U_z$, given by the equations:
$$Q_x:\quad x^2_0-ax^2_1-f_xx^2_2+af_xx^2_3-g_{1x}g_{2x}x^2_4=0,$$

$$Q_y:\quad x^2_0-ax^2_1-f_yx^2_2+af_yx^2_3-g_{1y}g_{2y}x^2_4=0,$$

$$Q_z:\quad x^2_0-ax^2_1-f_zx^2_2+af_zx^2_3-g_{1z}g_{2z}x^2_4=0,$$
respectively, where
$$f_x=\frac{y}{x},\quad g_{1x}=\frac{\prod_{h\in H}(l_1+h)}{x^8},\quad g_{2x}=\frac{\prod_{h\in H}(l_2+h)}{x^8},$$
$$f_y=\frac{x}{y},\quad g_{1y}=\frac{\prod_{h\in H}(l_1+h)}{y^8},\quad g_{2y}=\frac{\prod_{h\in H}(l_2+h)}{y^8},$$
$$f_z=\frac{xy}{z^2},\quad g_{1z}=\frac{\prod_{h\in H}(l_1+h)}{z^8},\quad g_{2z}=\frac{\prod_{h\in H}(l_2+h)}{z^8}.$$
Clearly $Q_x,Q_y,Q_z$ are all isomorphic to $Q$ over $U_x\cap U_y\cap U_z$. There are isomorphisms $Q_x\rightarrow Q_y,Q_y\rightarrow Q_z,Q_z\rightarrow Q_x$ over $U_x\cap U_y,U_y\cap U_z,U_z\cap U_x$, respectively, induced by the change of coordinates:
\begin{equation}
[x_0:x_1:x_2:x_3:x_4]\mapsto[x_0:x_1:\frac{y}{x}x_2:\frac{y}{x}x_3:\frac{y^8}{x^8}x_4],
\end{equation}
\begin{equation}
[x_0:x_1:x_2:x_3:x_4]\mapsto[x_0:x_1:\frac{z}{y}x_2:\frac{z}{y}x_3:\frac{z^8}{y^8}x_4],
\end{equation}
\begin{equation}
[x_0:x_1:x_2:x_3:x_4]\mapsto[x_0:x_1:\frac{x}{z}x_2:\frac{x}{z}x_3:\frac{x^8}{z^8}x_4],
\end{equation}
respectively. These maps satisfy the cocycle condition, and hence the maps 
$Q_x\rightarrow U_x,Q_y\rightarrow U_y,Q_z\rightarrow U_z$ glue together to a fibration $\rho:Y\rightarrow\mathbb P^2_k$ in quadrics.
\begin{lemma} The open subscheme $Y_{xy}=\rho^{-1}(U_x\cap U_y)\subset Y$ is regular, except over the fibres of the set $D$ of intersections of any two of the lines:
$$K=\{l_1+h=0|h\in H\}\cup\{l_2+h=0|h\in H\}.$$
For every $d\in D$ the fibre $\rho^{-1}(d)$ has one isolated singularity which can be resolved with one blow-up. 
\end{lemma}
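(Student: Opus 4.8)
The plan is to read the singular locus of $Y_{xy}$ directly off the diagonal equation of the quadric bundle, and then to recognise the bad points as ordinary double points of a $5$-fold. Over $U_x\cap U_y$ the scheme $Y_{xy}$ is cut out, inside a smooth $6$-dimensional ambient variety (an affine chart of $(U_x\cap U_y)\times\mathbb P^4$), by the single equation
$$F=X_0^2-aX_1^2-f_yX_2^2+af_yX_3^2-g_{1y}g_{2y}X_4^2,$$
so by the Jacobian criterion $Y_{xy}$ is smooth over $k$, hence regular, at any closed point where $dF\neq 0$, and is non-regular only where in addition $F\in\mathfrak m^2$. Of the coefficients $1,-a,-f_y,af_y$ the first two never vanish and $f_y=x/y$ is a unit on $U_x\cap U_y$; hence over a point $p$ lying off the zero locus of $g_{1y}g_{2y}$ the fibre $\rho^{-1}(p)$ is a smooth quadric, some fibre-direction partial of $F$ is non-zero at each point above $p$, and $Y_{xy}$ is regular there. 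Since $y$ is a unit on $U_x\cap U_y$, the zero locus of $g_{1y}g_{2y}=\prod_{h\in H}(l_1+h)\prod_{h\in H}(l_2+h)/y^{16}$ meets $U_x\cap U_y$ exactly in $K$, so only the fibres over $K$ remain to be examined.

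Next I would treat a point $p\in K\cap(U_x\cap U_y)$ lying on a single line of $K$; this is automatic if $p\notin D$, because general position of $L$ then forces $p$ onto no further line of $L$. Exactly one linear factor of $g_{1y}g_{2y}$ vanishes at $p$, so $g_{1y}g_{2y}=t\,w$ near $p$, with $t$ a local equation of that line and $w$ a unit. Since $f_y(p)\neq 0$, the fibre $\rho^{-1}(p)$ is the rank-$4$ quadric $X_0^2-aX_1^2-f_y(p)X_2^2+af_y(p)X_3^2=0$ in $\mathbb P^4$, a cone whose only singular point is the vertex $P_0=[0:0:0:0:1]$; away from $P_0$ the fibre is smooth and $Y_{xy}$ is regular, while in the chart $X_4=1$ with base coordinates $(t,v)$ one computes $\partial F/\partial t=-w+O(X_0^2,\dots,X_3^2)$, which equals $-w(p)\neq 0$ at $(p,P_0)$, so $Y_{xy}$ is regular there too. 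Thus $Y_{xy}$ is regular over all of $(U_x\cap U_y)\setminus D$.

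The heart of the argument is a point $d\in D$. General position forces exactly two lines of $K$ through $d$; let $u,v$ be their linear forms, which are non-proportional (distinct lines), hence transverse, so $(u,v)$ is a system of local coordinates at $d$. Whichever of the three possibilities occurs for the pair of lines through $d$ — both among the $l_1+h$, both among the $l_2+h$, or one of each — precisely these two linear factors of $\prod_{h\in H}(l_1+h)\prod_{h\in H}(l_2+h)$ vanish at $d$ while all remaining factors and $y$ are units near $d$, so $g_{1y}g_{2y}=uv\,w$ with $w$ a unit. Again $\rho^{-1}(d)$ is a rank-$4$ cone with the unique singular point $P_0=[0:0:0:0:1]$, and $Y_{xy}$ is regular over $d$ away from $P_0$. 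In the chart $X_4=1$ one checks $F(d,P_0)=0$ and $dF(d,P_0)=0$ — the $X_i$-partials are multiples of $X_i$, and the $u,v$-partials vanish since $uvw$ vanishes to order $2$ at $d$ — while the quadratic part of $F$ at $(d,P_0)$ is
$$X_0^2-aX_1^2-f_y(d)X_2^2+af_y(d)X_3^2-w(d)\,uv,$$
a non-degenerate quadratic form of rank $6$ in $(u,v,X_0,X_1,X_2,X_3)$: rank $4$ in the fibre variables since $a,f_y(d)\neq 0$, plus rank $2$ in $u,v$. Hence $\widehat{\mathcal O}_{Y_{xy},(d,P_0)}$ is the complete local ring of an ordinary double point of a $5$-fold. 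Such a point is resolved by blowing up its closed point: in each of the standard affine charts the proper transform of the quadric cone is a smooth affine quadric, and the exceptional divisor is the smooth projective quadric fourfold defined by the quadratic part above. Blowing up $Y_{xy}$ at each of the finitely many points $(d,P_0)$, $d\in D$, therefore produces a regular scheme, which is the asserted resolution.

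The only step requiring genuine care is the extraction, from the general position hypothesis, of the facts that exactly two — and transverse — lines of $K$ meet at each $d\in D$ and that no further linear factor of $g_{1y}$ or $g_{2y}$ degenerates there, so that $g_{1y}g_{2y}$ has an ordinary (rank $2$) double zero along $D$. Granting this, everything else is the standard local model of a quadric bundle degenerating transversally along a smooth divisor, together with the classical fact that an ordinary double point is resolved by a single blow-up whose exceptional divisor is a smooth quadric hypersurface.
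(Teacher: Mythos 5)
Your proposal is correct and follows essentially the same route as the paper: a Jacobian-criterion computation showing the singular locus is confined to the vertices of the fibres over $D$, identification of the local model $x_0^2+x_1^2+x_2^2+x_3^2+t_1t_2=0$ (an ordinary double point of a $5$-fold) using that general position gives a transverse, exactly-two-line crossing at each $d\in D$, and resolution by a single point blow-up. The only cosmetic difference is that the paper writes out the blow-up in explicit affine charts where you invoke the standard fact that a node is resolved by one blow-up with smooth quadric exceptional divisor.
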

\begin{proof} We may work over $\overline k$ in all that follows. For every $p\in U_x\cap U_y-\bigcup_{l\in K}\{l=0\}$ the family $Y_{xy}$ is given by the equation:
$$x^2_0+x^2_1+x^2_2+x^2_3+x^2_4=0$$
in an \'etale neighbourhood around $p$, after a suitable change of coordinates. In particular over this open subscheme the map $\rho$ is smooth. For every $p\in\bigcup_{l\in K}\{l=0\}-D$ the family $Y_{xy}$ is given by the equation:
$$x^2_0+x^2_1+x^2_2+x^2_3+tx^2_4=0$$
in an \'etale neighbourhood around $p$ where $t$ is a local coordinate around $p$, after a suitable change of coordinates. By taking partial derivatives we get that $\rho^{-1}(U_x\cap U_y-D)$ is still regular and over $\bigcup_{l\in K}\{l=0\}-D$ the fibre of $\rho$ is a cone over a non-singular quadric surface. For every $p\in D$ the family $Y_{xy}$ is given by the equation:
$$x^2_0+x^2_1+x^2_2+x^2_3+t_1t_2x^2_4=0$$
in an \'etale neighbourhood around $p$ where $t_1,t_2$ are a complete system of local coordinates around $p$. By taking partial derivatives we get that there is an isolated singularity at $x_0=\cdots=x_3=t_1=t_2=0$. Consider the affine open neighbourhood of this singularity given by the equation:
$$x^2_0+x^2_1+x^2_2+x^2_3+t_1t_2=0,$$
and consider the blow-up of this variety at $(0,0,\ldots,0)$ in $\mathbb A^6_{\overline k}\times\mathbb P^5_{\overline k}$, where we introduce the homogeneous coordinates $[y_0:\ldots:y_3:u_1:u_2]$ for $\mathbb P^5_{\overline k}$. On the open subscheme $y_0\neq0$ the equation of the blow-up is:
$$1+y^2_1+y^2_2+y^2_3+u_1u_2=0$$
in the variables $x_0,y_1,y_2,y_3,u_1,u_2$, and we have a similar equation for the domains $y_1\neq0,y_2\neq0,y_3\neq0$, too. On the open subscheme $u_1\neq0$ the equation of the blow-up is:
$$y^2_0+y^2_1+y^2_2+y^2_3+u_2=0$$
in the variables $t_1,y_0,\ldots,y_3,u_2$, and we have a similar equation for the domain $u_2\neq0$, too. By taking partial derivatives we get that these varieties are smooth.
\end{proof}
Let $V_{xy}$ denote the closed subscheme of $\mathbb P^2_k$ which is the complement of $U_{x}\cap U_{y}$, $\textrm{i.e.}$ given by the equation $xy=0$. Let $Z_x,Z_y,Z_z$ denote the closed subscheme of $Q_x,Q_y,Q_z,$ respectively, given by the same system of equations:
$$xy=0,\quad x^2_2=ax^2_3,\quad x_0=x_1=x_4=0.$$
These glue together under the maps (4.0.1), (4.0.2) and (4.0.3), hence they define a closed subscheme $Z\subseteq Y$ which lies in $\rho^{-1}(V_{xy})$.
\begin{lemma} The restriction of the singular locus of $Y$ onto $\rho^{-1}(V_{xy}-[0:0:1])$ is the closed subscheme $Z$. By blowing up $Z$ the singularities are resolved.
\end{lemma}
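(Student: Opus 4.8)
The plan is to determine the singular locus of $Y$ along $\rho^{-1}(V_{xy}-[0:0:1])$ by the Jacobian criterion, identify it with $Z$, and then resolve it by an explicit blow-up computation in local charts, just as in the proof of Lemma 4.1. Throughout one may work over $\overline k$: the singular locus and $Z$ are both defined over $k$, a $k$-scheme is regular once its base change to $\overline k$ is smooth, and forming a blow-up commutes with the flat base change $\mathrm{Spec}\,\overline k\to\mathrm{Spec}\,k$. Since $V_{xy}-[0:0:1]$ is the disjoint union of $\{y=0\}-[0:0:1]\subset U_x$ and $\{x=0\}-[0:0:1]\subset U_y$, it is enough to work over $\{y=0\}$ inside $Q_x$ and then invoke the symmetric computation over $\{x=0\}$ inside $Q_y$; the two descriptions glue to $Z$ because $Z_x,Z_y,Z_z$ do. First I would record the consequences of $L$ being a set of lines in general position that are needed: the set $D$ of Lemma 4.1 is disjoint from $V_{xy}$ (so $Z$ is disjoint from $\rho^{-1}(D)$, and this lemma genuinely complements Lemma 4.1); each $\{l_i+h=0\}$ is a line distinct from $\{x=0\}$ and $\{y=0\}$, so the form $l_i+h$ does not vanish identically on either; and no three lines of $L$ are concurrent, so $\{l_i+h=0\}$ meets $\{y=0\}$ transversally in a single point. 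Hence $g_{1x}$ and $g_{2x}$ are regular near $\{y=0\}-[0:0:1]$, do not vanish identically on $\{y=0\}$, vanish to order exactly one at the finitely many points where a line of $K$ meets $\{y=0\}$, and never vanish there simultaneously.

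To identify the singular locus I would set $t=f_x=y/x$, a local equation of $\{y=0\}$ on $U_x$, and diagonalise the quadric over $\overline k$ by $w_0=x_0-\sqrt a\,x_1$, $w_1=x_0+\sqrt a\,x_1$, $w_2=x_2-\sqrt a\,x_3$, $w_3=x_2+\sqrt a\,x_3$, $w_4=x_4$, so that over $U_x$ the fibration $Y$ is given by $F=w_0w_1-tw_2w_3-g_{1x}g_{2x}w_4^2=0$. Imposing the Jacobian criterion on the locus $t=0$: the fibre derivatives force $w_0=w_1=0$ and $g_{1x}g_{2x}\,w_4=0$, and then $F=0$ holds automatically. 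If $w_4=0$, one is left precisely with $\{t=0,\ w_0=w_1=w_4=0,\ w_2w_3=0\}$, which translated back is $Z_x\cap\rho^{-1}(\{y=0\})$. If $w_4\neq0$, then $g_{1x}g_{2x}=0$, so the point lies over some point where a line of $K$ crosses $\{y=0\}$; there one factor equals $s\cdot u$ with $s$ a local equation of that line and $u$ a unit, the other factor is a unit, and the $s$-derivative of $F$ equals $-(\partial_s(g_{1x}g_{2x}))w_4^2=-u\,w_4^2\neq0$ on $\{s=0\}$ --- a contradiction. So along $\rho^{-1}(\{y=0\}-[0:0:1])$ the singular locus of $Y$ is $Z_x$, and the symmetric computation in $Q_y$ gives the statement over $\{x=0\}-[0:0:1]$; this proves the first assertion.

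For the resolution I would pass to an \'etale neighbourhood of a singular point, extract a square root of the unit factor of $g_{1x}g_{2x}$ and absorb it into $w_4$, so that $Y$ is given by $w_0w_1-tw_2w_3-w_4^2=0$ near a generic point of $\{y=0\}$ and by $w_0w_1-tw_2w_3-sw_4^2=0$ near a crossing point, $t$ (and $s$) being base coordinates. Over $\overline k$ the centre $Z$ splits near such a point, according to $w_2w_3=0$, into the two disjoint smooth curves $\{t=w_0=w_1=w_2=w_4=0\}$ and $\{t=w_0=w_1=w_3=w_4=0\}$, so blowing up $Z$ locally amounts to blowing up one of them, the other being handled identically after exchanging $w_2$ and $w_3$. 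In the chart $w_3=1$ the variety $Y$ is the hypersurface $w_0w_1-tw_2-w_4^2=0$ (resp.\ $w_0w_1-tw_2-sw_4^2=0$), whose equation lies in the square of the ideal $(t,w_0,w_1,w_2,w_4)$ of the centre; so in each of the five standard affine charts of the blow-up the equation picks up the square of the pivot coordinate, and the strict transform is cut out by a single equation that solves for one of the new coordinates --- for instance pivoting on $w_4$ gives $w_0'w_1'-t'w_2'=1$ (resp.\ $w_0'w_1'-t'w_2'=s$), which is visibly smooth, while the other four charts are simpler still. Hence $\mathrm{Bl}_ZY$ is smooth over $\overline k$, so regular, along the preimage of $\rho^{-1}(V_{xy}-[0:0:1])$ (and its exceptional divisor there is a family of quadric threefolds over the curves $\{y=0\}-[0:0:1]$ and $\{x=0\}-[0:0:1]$).

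The argument is entirely local and computational rather than conceptual. The step I expect to demand the most care is the case analysis in the Jacobian computation: one has to verify that over $V_{xy}-[0:0:1]$ the only singularities of $Y$ are those forced by the drop of the quadric's rank along $\{y=0\}$ and $\{x=0\}$ --- namely the points of $Z$ --- and in particular that no further singularities appear over the crossing points of the lines of $K$ with $\{y=0\}$ and $\{x=0\}$, which is exactly where the simple, transverse vanishing of $g_{1x}$ or $g_{2x}$ (itself a consequence of general position) is used. The one other delicate point is to organise the five-chart blow-up computation uniformly enough that the crossing points are covered by the same normal form as the generic points of $\{y=0\}$.
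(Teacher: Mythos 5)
Your proposal is correct and follows essentially the same route as the paper: reduce to \'etale-local normal forms of the hypersurface $Y$ over $\{x=0\}$ and $\{y=0\}$, apply the Jacobian criterion to identify the singular locus with the two disjoint components of $Z$ (using general position to guarantee transverse, simple vanishing of $g_{1x}g_{2x}$ at the crossing points with lines of $K$), and then verify smoothness of the blow-up chart by chart. Your hyperbolic coordinates $w_0w_1,\ w_2w_3$ versus the paper's sums of squares, and your more explicit treatment of the $w_4\neq 0$ case over the crossing points, are only cosmetic differences.
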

\begin{proof} For every $p\in V_{xy}-\{[0:0:1]\}-\bigcup_{l\in K}l$ the family $Y$ is given by the equation:
$$r_0r_1t_1+x^2_0+x^2_1+x^2_4=0$$
in an \'etale neighbourhood around $p$ where $t_1,t_2$ are a complete system of local coordinates around $p$ such that the closed subscheme $Z$ is given by the equations:
$$t_1=0,\quad r_0r_1=0,\quad x_0=x_1=x_4=0.$$
By taking partial derivatives we get that the singular locus is indeed $Z$ which is the union of the two closed subsets:
$$Z_0:r_0=t_1=x_0=x_1=x_4=0,$$
$$Z_1:r_1=t_1=x_0=x_1=x_4=0.$$
Consider the affine open neighbourhood $r_1\neq0$ of $Z_0$ given by the equation:
$$r_0t_1+x^2_0+x^2_1+x^2_4=0,$$
and consider the blow-up of this variety at $Z_0$ in
$\mathbb A^6_{\overline k}\times\mathbb P^4_{\overline k}$, where we introduce the homogeneous coordinates $[s_0:v_1:y_0:y_1:y_4]$ for $\mathbb P^4_{\overline k}$ (corresponding to $(r_0,t_1,x_0,x_1,x_4)$). On the open subscheme $v_1\neq0$ the equation of the blow-up is:
$$s_0+y^2_0+y^2_1+y^2_4=0$$
in the variables $t_1,t_2,s_0,y_0,y_1,y_4$, and we have a similar equation for the domain $s_0\neq0$. By taking partial derivatives we get that these varieties are smooth. On the open subscheme $y_0\neq0$ the equation of the blow-up is:
$$s_0v_1+1+y^2_1+y^2_4=0$$
in the variables $v_1,t_2,s_0,x_0,y_1,y_4$, and we have a similar equation for the domains $y_1\neq0$ and $y_4\neq0$. These are smooth, too. A similar computation takes care of $Z_1$.

For every $p\in V_{xy}\cap\left(\bigcup_{l\in K}l\right)$ the family $Y$ is given by the equation:
$$r_0r_1t_1+x^2_0+x^2_1+x^2_4t_2=0$$
in an \'etale neighbourhood around $p$ where $t_1,t_2$ are a complete system of local coordinates around $p$ such that the closed subscheme $Z$ is given by the equations:
$$t_1=0,\quad r_0r_1=0,\quad x_0=x_1=x_4=0.$$
By taking partial derivatives we get that the singular locus is still $Z$ which is the union of the two closed subsets:
$$Z_0:r_0=t_1=x_0=x_1=x_4=0,$$
$$Z_1:r_1=t_1=x_0=x_1=x_4=0.$$
Consider the affine open neighbourhood $r_1\neq0$ of $Z_0$ given by the equation:
$$r_0t_1+x^2_0+x^2_1+x^2_4t_2=0,$$
and consider the blow-up of this variety at $Z_0$ in
$\mathbb A^6_{\overline k}\times\mathbb P^4_{\overline k}$, where we introduce the homogeneous coordinates $[s_0:v_1:y_0:y_1:y_4]$ for $\mathbb P^4_{\overline k}$ (corresponding to $(r_0,t_1,x_0,x_1,x_4)$). On the open subscheme $v_1\neq0$ the equation of the blow-up is:
$$s_0+y^2_0+y^2_1+y^2_4t_2=0$$
in the variables $t_1,t_2,s_0,y_0,y_1,y_4$, and we have a similar equation for the domain $s_0\neq0$. By taking partial derivatives we get that these varieties are smooth. On the open subscheme $y_0\neq0$ the equation of the blow-up is:
$$s_0v_1+1+y^2_1+y^2_4t_2=0$$
in the variables $v_1,t_2,s_0,x_0,y_1,y_4$, and we have a similar equation for the domains $y_1\neq0$ and $y_4\neq0$. These are smooth, too. A similar computation takes care of $Z_1$.
\end{proof}
Let $W\subset\rho^{-1}([0:0:1])$ be the closed subscheme which is given by the system of equations:
$$x=y=x_0=x_1=x_4=0,$$
if we consider it as a subscheme of $Q_z$.
\begin{lemma} The restriction of the singular locus of $Y$ onto $\rho^{-1}([0:0:1])$ is the closed subscheme $W$. By blowing up $W$ first, then the proper transform
of $Z$, these singularities are resolved.  
\end{lemma}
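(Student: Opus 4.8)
The plan is to follow the pattern of Lemmas 4.2 and 4.4, reducing everything to an explicit computation over $\overline{k}$ in an \'etale neighbourhood of the fibre $\rho^{-1}([0:0:1])$. I would work in the chart $U_z$ with the affine coordinates $x,y$ on $\mathbb P^2_k$ obtained by setting $z=1$, so that $f_z=xy$ near $[0:0:1]$. Since $\{x=0\}$ and $\{y=0\}$ both pass through $[0:0:1]$ and $L$ is in general position, no line of $K$ passes through $[0:0:1]$, so $g_{1z}$ and $g_{2z}$ are units in a neighbourhood of $[0:0:1]$. Working over $\overline{k}$ we may extract \'etale-local square roots of $-a$ and of these units and absorb the rank two form $ax_3^2-x_2^2$ into new coordinates on $x_2,x_3$; this puts $\rho\colon Y\to\mathbb P^2_k$ near $[0:0:1]$ into the form
$$x_0^2+x_1^2+x_4^2+xy\,x_2x_3=0$$
inside $\mathbb P^4_{\overline{k}}$ over a neighbourhood of $0$ in $\mathbb A^2_{\overline{k}}$. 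In these coordinates $W$ is the line $\{x_0=x_1=x_4=0,\ x=y=0\}$ and, near $[0:0:1]$, $Z$ is $\{x_0=x_1=x_4=0,\ x_2x_3=0,\ xy=0\}$; in particular $Z\cap\rho^{-1}([0:0:1])$ is the pair of points $[x_2:x_3]=[1:0]$ and $[0:1]$ lying on $W$.

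For the first assertion I would compute the Jacobian of the local equation on the five affine charts of $\mathbb P^4$. On the charts $x_0\neq0$, $x_1\neq0$, $x_4\neq0$ there is no singular point over $[0:0:1]$: on the fibre $\{x=y=0\}$ the equation reads $x_0^2+x_1^2+x_4^2=0$, and vanishing of the two relevant fibre partials forces the point off this quadric, which has a nonzero constant term in those charts. On the charts $x_2\neq0$ and $x_3\neq0$, which together cover $W$, the partials with respect to $x$, $y$ and the three coordinates among $x_0,x_1,x_4$ vanish precisely along $W$ (any other point over $[0:0:1]$ has $x_2=x_3=0$, hence lies on a smooth conic in the fibre and is a smooth point of $Y$). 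This gives $\mathrm{Sing}(Y)\cap\rho^{-1}([0:0:1])=W$.

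For the second assertion, away from the two points of $Z\cap W$ the equation is $x_0^2+x_1^2+x_4^2+(\mathrm{unit})\cdot xy=0$ with $x_2$ or $x_3$ a free parameter, that is, \'etale-locally the product of $\mathbb A^1$ with the affine cone over a smooth quadric threefold; blowing up $W$ is then blowing up the vertex of this cone and yields a regular variety. Near a point of $Z\cap W$, say on the chart $x_2\neq0$, the equation becomes $x_0^2+x_1^2+x_4^2+xy\,x_3=0$ in $\mathbb A^6_{\overline{k}}$; I would blow up $W$ (the $x_3$-axis) and go through the standard charts of the exceptional $\mathbb P^4$. On the three charts where one of $x_0,x_1,x_4$ is privileged the proper transform acquires a unit constant term and is smooth. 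On the two charts where $x$ or $y$ is privileged, the proper transform is once more, up to a free coordinate, the affine cone over a smooth quadric threefold, and its singular locus is exactly the proper transform of the corresponding branch of $Z$; blowing up that proper transform resolves it. Reassembling the charts — and observing that blowing up $W$, which is supported on the single fibre $\rho^{-1}([0:0:1])$, changes nothing over $V_{xy}-[0:0:1]$, where the proper transform of $Z$ is therefore just $Z$, already handled by Lemma 4.4 — one concludes that after the two blow-ups $Y$ is regular over a neighbourhood of $[0:0:1]$.

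The one genuinely delicate point is this two-stage resolution at the points of $Z\cap W$. After the first blow-up the intermediate variety is no longer a hypersurface with an isolated singularity but is singular along a curve, namely the proper transform of $Z$, and one must check in coordinates both that this curve is smooth — the two branches of $Z$ through a point of $Z\cap W$ have distinct tangent directions transverse to $W$ and so are separated by blowing up $W$ — and that the singularity along it is still \'etale-locally a cone over a smooth quadric, so that the second blow-up finishes the job. The remaining content of the proof is routine Jacobian bookkeeping of the type already performed for Lemmas 4.2 and 4.4.
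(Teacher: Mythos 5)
Your argument is essentially the paper's own: both reduce to the \'etale-local normal form $x_0^2+x_1^2+x_4^2+xy\,x_2x_3=0$ near $[0:0:1]$ (the paper writes $r_0r_1t_1t_2+x_0^2+x_1^2+x_4^2=0$), identify the singular locus over the fibre as $W$ by a Jacobian computation, blow up $W$ chart by chart, observe that the residual singular locus is exactly the proper transform of the branches of $Z$ (the branches being separated because their tangent directions are transverse to $W$), and resolve it by the second blow-up since the transverse singularity is again a cone over a smooth quadric. The only blemishes are cosmetic (the references to ``Lemma 4.4'' should be to Lemma 4.2, and the chart-covering remark in the first Jacobian step is slightly garbled), so this is correct and matches the paper's proof.
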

\begin{proof} At $p=[0:0:1]$ the family $Y$ is given by the equation:
$$r_0r_1t_1t_2+x^2_0+x^2_1+x^2_4=0$$
in an \'etale neighbourhood around $p$ where $t_1,t_2$ are a complete system of local coordinates around $p$ such that the closed subscheme $Z$ is given by the equations:
$$t_1t_2=0,\quad r_0r_1=0,\quad x_0=x_1=x_4=0,$$
while $W$ is given by the equations:
$$t_1=t_2=x_0=x_1=x_4=0.$$
By taking partial derivatives we get that the singular locus is the union of $Z$ and $W$. On the affine open $r_0\neq 0$ the family $Y$ is given by the equation:
$$r_1t_1t_2+x^2_0+x^2_1+x^2_4=0.$$
Consider the blow-up of this variety at $W$ in
$\mathbb A^6_{\overline k}\times\mathbb P^4_{\overline k}$, where we introduce the homogeneous coordinates $[v_1:v_2:y_0:y_1:y_4]$ for $\mathbb P^4_{\overline k}$  (corresponding to $(t_1,t_2,x_0,x_1,x_4)$). On the open subscheme $v_1\neq0$ the equation of the blow-up is:
\begin{equation}
r_1v_2+y^2_0+y^2_1+y^2_4=0
\end{equation}
in the variables $r_1,t_1,v_2,y_0,y_1,y_4$, and we have a similar equation for the domain $v_2\neq0$. By taking partial derivatives we get that these varieties are smooth except at the closed subvarieties $Z'_1$ and $Z'_2$ given by the system of equations
$$Z'_1:r_1=v_2=y_0=y_1=y_4=0\quad\textrm{and}\quad Z'_2: r_1=v_1=y_0=y_1=y_4=0,$$
respectively. On the open subscheme $y_0\neq0$ the equation of the blow-up is:
$$r_1v_1v_2+1+y^2_1+y^2_4=0$$
in the variables $r_1,v_1,v_2,x_0,y_1,y_4$, and we have a similar equation for the domains $y_1\neq0$ and $y_4\neq0$. These are smooth. 

Consider the blow-up of the variety given the equation in (4.3.1) at $Z'_1$ in
$\mathbb A^6_{\overline k}\times\mathbb P^4_{\overline k}$, where we introduce the homogeneous coordinates $[s_1:w_2:z_0:z_1:z_4]$ for $\mathbb P^4_{\overline k}$ (corresponding to $(r_1,v_2,y_0,y_1,y_4)$). On the open subscheme $s_1\neq0$ the equation of the blow-up is:
$$w_2+z^2_0+z^2_1+z^2_4=0$$
in the variables $r_1,t_1,w_2,z_0,z_1,z_4$, and we have a similar equation for the domain $w_2\neq0$. By taking partial derivatives we get that these varieties are smooth. On the open subscheme $z_0\neq0$ the equation of the blow-up is:
$$s_1w_2+1+z^2_1+z^2_4=0$$
in the variables $s_1,t_1,w_2,y_0,z_1,z_4$, and we have a similar equation for the domains $y_1\neq0$ and $y_4\neq0$. These are all smooth. A similar computation shows that the blow-up at $Z'_2$ of the variety which we get by blowing-up $Y$ at $W$ is smooth in an open neighbourhood of the pre-image of $Z'_2$. 

It is also clear from the above that the union $Z'_1\cup Z'_2$ is the intersection of the proper transform of the closed subscheme
\begin{equation}
t_1t_2=0,\quad r_1=0,\quad x_0=x_1=x_4=0
\end{equation}
with an open neighbourhood of fibre of the blow-up over $t_1=t_2=0$. A similar computation shows that the singular locus of the blow-up of $Y$ at $W$ on the affine open $r_1\neq 0$ is the intersection of the proper transform of the closed subscheme:
\begin{equation}
t_1t_2=0,\quad r_0=0,\quad x_0=x_1=x_4=0
\end{equation}
with an open neighbourhood of fibre of the blow-up over $t_1=t_2=0$, and the blow-up of this subvariety is regular. Since the (scheme-theoretical)
union of the two closed subschemes in (4.3.2) and (4.3.3) is $Z$, the claim is now clear.
\end{proof}
\begin{defn} Let $\rho_0:Y_0\rightarrow\mathbb P^2_k$ be the family
which we get by blowing up the disjoint closed subschemes $W\subset Y$ and the isolated singularities of $Y_{xy}\subset Y$ inside the family $\rho:Y\rightarrow\mathbb P^2_k$. Let $\pi:X\rightarrow\mathbb P^2_k$
be  be the family which we get by blowing up the proper transform $\widetilde
Z\subset Y_0$ of the closed subscheme $Z\subset Y$ inside the family $\rho_0:Y_0\rightarrow\mathbb P^2_k$. By the three lemmas above the resulting variety $X$ is regular.
\end{defn}
\begin{proof}[Proof of Theorem 1.1] By construction $X$ is a model of $Q$, therefore the claim is true.
\end{proof} 

\section{The cohomology of quadrics}

In this section and the next we will drop the subscript ${\textrm{\rm\'et}}$ when we will talk about \'etale cohomology, for the sake of simple notation. Since we will not talk about any other cohomology theory, this will not lead to confusion.  Let $l$ be a prime different from the characteristic of $k$. Let $C_1\subset\mathbb P^4_{\overline k}$ be a non-singular quadric hypersurface. After a change of variables $C_1$ is defined by the homogeneous equation:
$$x^2_0+x^2_1+x^2_2+x^2_3+x^2_4=0.$$
Let $\eta\in H^2(C_1,\mathbb Z_l)(-1)$ be the class of the hyperplane section.
\begin{prop} The following holds:
\begin{enumerate}
\item[$(a)$] we have:
$$H^i(C_1,\mathbb Z_l)\cong\left\{\begin{array}{ll}\mathbb Z_l,&\text{if $i$ is even and $0\leq i\leq 6$,}
\\0,&\text{otherwise,}\end{array}\right.$$
\item[$(b)$] the class $\eta$ generates $H^2(C_1,\mathbb Z_l)(-1)$ as a $\mathbb Z_l$-module.
\end{enumerate} 
\end{prop}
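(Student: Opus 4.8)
The variety $C_1$ is a smooth three-dimensional quadric over the algebraically closed field $\overline k$, and its cohomology is classical; I would prove part $(a)$ by exhibiting an affine paving of $C_1$ and invoking the usual computation of $l$-adic cohomology for cellular varieties. Fix a closed point $p\in C_1$ and project from $p$. Since $C_1$ is non-singular, the tangent hyperplane $T_pC_1\cong\mathbb P^3_{\overline k}$ cuts out on $C_1$ a quadric of rank $3$, that is, the cone with vertex $p$ over a smooth conic $D\cong\mathbb P^1_{\overline k}$. Projection from $p$ identifies the open subscheme $C_1\setminus T_pC_1$ with $\mathbb P^3_{\overline k}\setminus\mathbb P^2_{\overline k}\cong\mathbb A^3_{\overline k}$, while the complementary closed subscheme $C_1\cap T_pC_1$ is paved by $\mathbb A^2_{\overline k}\sqcup\mathbb A^1_{\overline k}\sqcup\mathbb A^0_{\overline k}$: the vertex $p$ is a point, and the complement of the vertex in the cone is the total space of a line bundle over $D\cong\mathbb P^1_{\overline k}=\mathbb A^1_{\overline k}\sqcup\mathbb A^0_{\overline k}$, trivial over the affine piece. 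Hence $C_1$ admits an affine paving with exactly one cell in each dimension $0,1,2,3$.

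For a proper variety with such a paving the Gysin (localisation) sequences for the stratification show, stratum by stratum, that $H^m(C_1,\mathbb Z_l)$ is a free $\mathbb Z_l$-module of rank equal to the number of cells of dimension $m/2$ when $m$ is even, and vanishes when $m$ is odd; applied to the paving above this is exactly statement $(a)$, torsion-freeness included. Alternatively, and equivalently, projection from $p$ realises $\mathrm{Bl}_pC_1\cong\mathrm{Bl}_D\mathbb P^3_{\overline k}$ for the smooth conic $D\cong\mathbb P^1_{\overline k}$; equating the two blow-up formulae $H^*(\mathrm{Bl}_pC_1)=H^*(C_1)\oplus H^{*-2}(\mathrm{pt})\oplus H^{*-4}(\mathrm{pt})$ and $H^*(\mathrm{Bl}_D\mathbb P^3_{\overline k})=H^*(\mathbb P^3_{\overline k})\oplus H^{*-2}(\mathbb P^1_{\overline k})$, and using that $\mathbb Z_l$ is a discrete valuation ring, determines each $H^i(C_1,\mathbb Z_l)$ and forces it to be torsion-free.

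For part $(b)$ I would restrict to a hyperplane section. Choose $H\subset\mathbb P^4_{\overline k}$ with $S:=C_1\cap H$ a smooth quadric surface; over $\overline k$ one has $S\cong\mathbb P^1_{\overline k}\times\mathbb P^1_{\overline k}$, so by the K\"unneth formula $H^2(S,\mathbb Z_l)(-1)=\mathbb Z_l\,h_1\oplus\mathbb Z_l\,h_2$ with $h_1,h_2$ the classes of the two rulings. Since $\dim C_1=3$, the weak Lefschetz theorem makes the restriction map $H^2(C_1,\mathbb Z_l)(-1)\to H^2(S,\mathbb Z_l)(-1)$ injective, and it carries $\eta$ to the hyperplane class of $S$ in its Segre embedding, namely $c_1(\mathcal O_S(1))=h_1+h_2$, which is a primitive element of $\mathbb Z_l\,h_1\oplus\mathbb Z_l\,h_2$. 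Since $H^2(C_1,\mathbb Z_l)(-1)\cong\mathbb Z_l$ by $(a)$ and $\eta$ is the image, under an injective homomorphism, of a primitive vector, $\eta$ is not divisible by $l$ and therefore generates $H^2(C_1,\mathbb Z_l)(-1)$.

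The inputs here — the cellular computation of $l$-adic cohomology (equivalently the blow-up formula) and the weak Lefschetz theorem — are all standard, so I do not expect a serious obstacle. The only points requiring care are the classical geometry of projection from a point on a smooth quadric (that the complement of the tangent section is an affine space and that the tangent section is a cone over a smooth conic, routine in characteristic $\neq2$) and the small amount of module theory over $\mathbb Z_l$ needed to deduce torsion-freeness and the primitivity of $\eta$ from the splittings; keeping the computation integral rather than merely rational is precisely what the affine-paving / blow-up approach is designed to handle.
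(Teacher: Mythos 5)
Your argument is correct, but it is worth saying that the paper does not actually prove this proposition: it simply cites Theorem 1.12 of Reid's thesis \cite{Re1} and the surrounding discussion, where the integral cohomology of smooth quadric hypersurfaces is computed. What you supply is a genuine self-contained proof of that classical fact, and both of your routes work: the affine paving $C_1=\mathbb A^3\sqcup\mathbb A^2\sqcup\mathbb A^1\sqcup\mathbb A^0$ obtained by projecting from a point (the complement of the tangent hyperplane section is $\mathbb A^3$, and the tangent section is a cone over a smooth conic) gives part $(a)$ with torsion-freeness via the standard cellular computation, and the identification $\mathrm{Bl}_pC_1\cong\mathrm{Bl}_D\mathbb P^3_{\overline k}$ together with the blow-up formula gives the same answer by cancellation over the discrete valuation ring $\mathbb Z_l$. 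For part $(b)$, two small remarks. First, your phrasing ``$\eta$ is the image, under an injective homomorphism, of a primitive vector'' is backwards: $\eta$ is the \emph{preimage} of the primitive class $h_1+h_2$ under the restriction map; what you actually use is that if $\eta$ were divisible by $l$ then so would be its restriction, and for that implication the injectivity provided by weak Lefschetz is not even needed --- functoriality of restriction suffices, given that $H^2(C_1,\mathbb Z_l)(-1)\cong\mathbb Z_l$ by part $(a)$. Second, you should note (as you implicitly do) that the hyperplane class of the Segre quadric $S\cong\mathbb P^1_{\overline k}\times\mathbb P^1_{\overline k}$ is $h_1+h_2$ in the K\"unneth basis, which is where primitivity comes from. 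These are cosmetic points; the proof is sound and, unlike the paper's citation, makes the integral (torsion-free) nature of the answer transparent, which is exactly what the later sections of the paper rely on.
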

\begin{proof} See Theorem 1.12 of \cite{Re1} and the discussion preceding it.
\end{proof}
Let $C_2\subset\mathbb P^4_{\overline k}$ be a quadric hypersurface given by the homogeneous equation:
$$x_0^2+x_1^2+x^2_2+x^2_3=0,$$
where $[x_0:x_1:x_2:x_3:x_4]$ are the homogeneous coordinates on $\mathbb P^4_{\overline k}$. Then $C_2$ is a cone over the non-sigular quadric surface $C\subset\mathbb P^3_{\overline k}$ given by the same equation. Let $\widetilde C_2$ be the blow-up of $C_2$ at its unique singular point $[0:0:0:0:1]$.
\begin{prop} The following holds:
$$H^i(\widetilde C_2,\mathbb Z_l)\cong\left\{\begin{array}{ll}
\mathbb Z_l,&\text{if $i=0,6$,}\\
\mathbb Z_l^3,&\text{if $i=2,4$,}\\
0,&\text{otherwise,}\end{array}\right.$$
\end{prop}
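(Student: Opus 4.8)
The plan is to realise $\widetilde C_2$ as a $\mathbb P^1$-bundle over the non-singular quadric surface $C$ and then to apply the projective bundle formula in \'etale cohomology. For the base one has, over $\overline k$, an isomorphism $C\cong\mathbb P^1_{\overline k}\times\mathbb P^1_{\overline k}$, so that by the K\"unneth formula (or by Theorem 1.12 of \cite{Re1} applied to a smooth quadric in $\mathbb P^3_{\overline k}$) one gets $H^i(C,\mathbb Z_l)\cong\mathbb Z_l$ for $i=0,4$, $H^2(C,\mathbb Z_l)\cong\mathbb Z_l^2$, and $H^i(C,\mathbb Z_l)=0$ for every other $i$.

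Next I would identify $\widetilde C_2$ with a projective bundle over $C$. The linear projection of $\mathbb P^4_{\overline k}$ away from the vertex $v=[0:0:0:0:1]$ is resolved by the blow-up $\mathrm{Bl}_v\mathbb P^4_{\overline k}$, and the resolved morphism exhibits $\mathrm{Bl}_v\mathbb P^4_{\overline k}$ as a $\mathbb P^1$-bundle, namely $\mathbb P(\mathcal O\oplus\mathcal O(1))$, over $\mathbb P^3_{\overline k}$. Since $C_2$ is the cone over $C$ with vertex $v$, every line joining $v$ to a point of $C$ lies on $C_2$; hence the strict transform of $C_2$ inside $\mathrm{Bl}_v\mathbb P^4_{\overline k}$ is precisely the restriction of this $\mathbb P^1$-bundle to $C\subset\mathbb P^3_{\overline k}$. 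As the strict transform of $C_2$ in $\mathrm{Bl}_v\mathbb P^4_{\overline k}$ is exactly $\mathrm{Bl}_vC_2=\widetilde C_2$, we conclude that $\widetilde C_2\to C$ is a Zariski-locally trivial $\mathbb P^1$-bundle, with the exceptional divisor $E$ equal to the image of one of its two tautological sections, so that again $E\cong C$.

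Finally, substituting the cohomology of $C$ into the projective bundle formula $H^i(\widetilde C_2,\mathbb Z_l)\cong H^i(C,\mathbb Z_l)\oplus H^{i-2}(C,\mathbb Z_l)(-1)$ yields $H^0\cong\mathbb Z_l$, $H^2\cong\mathbb Z_l^2\oplus\mathbb Z_l\cong\mathbb Z_l^3$, $H^4\cong\mathbb Z_l\oplus\mathbb Z_l^2\cong\mathbb Z_l^3$, $H^6\cong\mathbb Z_l$, and $0$ in all remaining degrees, which is the assertion.

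The one point that deserves care is the second step, the identification of $\widetilde C_2$ with a genuine $\mathbb P^1$-bundle; it is classical but should be written out. An alternative that sidesteps it is to use the localisation sequence for $E\hookrightarrow\widetilde C_2$ with open complement $\widetilde C_2\setminus E\cong C_2\setminus\{v\}$, the latter being the total space of a line bundle over $C$ and therefore having the same cohomology as $C$; one then has to check that the boundary maps in that sequence vanish, which is somewhat more delicate, so I would prefer the projective-bundle route.
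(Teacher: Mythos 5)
Your proposal is correct and follows essentially the same route as the paper: both identify $\widetilde C_2\to C$ as a $\mathbb P^1$-bundle over the smooth quadric surface $C$ and then read off the cohomology, the paper via the degenerate Leray spectral sequence with $R^2p_*\mathbb Z_l\cong\mathbb Z_l$ (using a section coming from the triviality of the Brauer group of $C$) and you via the equivalent projective bundle formula. Your extra justification of the bundle structure through the resolved projection from the vertex is a welcome elaboration of a step the paper simply asserts.
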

\begin{proof} The scheme $\widetilde C_2$ is a $\mathbb P^1$-bundle over $C$. Let $p:\widetilde C_2\rightarrow C$ be this fibration. By the proper base change theorem $p_*\mathbb Z_l\cong\mathbb Z_l$, and we have $R^ip_*\mathbb Z_l=0$ for every $i\neq0,2$. Moreover $R^2p_*\mathbb Z_l\cong\mathbb Z_l$, again by the proper base change theorem and since the fibration has a section, as the Brauer group of $C$ is trivial. Therefore the Leray spectral sequence $H^i(C,R^jp_*\mathbb Z_l)\Rightarrow H^{i+j}(\widetilde C_2,\mathbb Z_l)$ degenerates and the claim follows from the well-known fact that
$$H^i(C,\mathbb Z_l)\cong\left\{\begin{array}{ll}
\mathbb Z_l,&\text{if $i=0,4$,}\\
\mathbb Z_l^2,&\text{if $i=2$,}\\
0,&\text{otherwise,}\end{array}\right.$$
(for a reference, see Theorem 1.12 of \cite{Re1} and the discussion preceding it).
\end{proof}
Let $\eta\in H^2(C_2,\mathbb Z_l)(-1)$ be the class of the hyperplane section and let $b:\widetilde C_2\rightarrow C_2$ denote the blow-up map.
\begin{prop} The following holds:
\begin{enumerate}
\item[$(a)$] we have:
$$H^i(C_2,\mathbb Z_l)=\left\{\begin{array}{ll}
\mathbb Z_l,&\text{if $i=0,2,6$,}\\
\mathbb Z_l^2,&\text{if $i=4$,}\\
0,&\text{otherwise,}\end{array}\right.$$
\item[$(b)$] the class $\eta$ generates $H^2(C_2,\mathbb Z_l)(-1)$ as a $\mathbb Z_l$-module.
\end{enumerate} 
\end{prop}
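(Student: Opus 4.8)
The plan is to deduce the cohomology of the singular cone $C_2$ from that of its resolution $\widetilde C_2$, computed in Proposition 5.2, by analysing the blow-up map $b\colon\widetilde C_2\to C_2$. The exceptional divisor $E$ of $b$ is the quadric surface $C\subset\mathbb P^3_{\overline k}$ (the base of the cone), and $b$ restricts to an isomorphism $\widetilde C_2\setminus E\xrightarrow{\sim}C_2\setminus\{pt\}$. So I would set up the two long exact sequences of the pair (or the two Mayer--Vietoris / Gysin-type sequences) comparing $C_2$ to $\widetilde C_2$: concretely, excision identifies $H^i(C_2,C_2\setminus\{pt\})$ with the local cohomology at the cone point, and $H^i(\widetilde C_2,\widetilde C_2\setminus E)$ with the Thom space of the normal bundle of $E$ in $\widetilde C_2$. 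Since $E\cong C$ is a smooth quadric surface and its normal bundle is the tautological (degree $-1$ in each ruling-appropriate sense) line bundle, the Gysin sequence $H^{i-2}(C,\mathbb Z_l)(-1)\to H^i(\widetilde C_2,\mathbb Z_l)\to H^i(\widetilde C_2\setminus E,\mathbb Z_l)\to H^{i-1}(C,\mathbb Z_l)(-1)$ is available, as is the analogous sequence on $C_2$ with the point removed. Comparing the two via $b$ then pins down $H^i(C_2,\mathbb Z_l)$.

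The key steps, in order: (i) record that $H^i(C_2\setminus\{pt\},\mathbb Z_l)\cong H^i(\widetilde C_2\setminus E,\mathbb Z_l)$ since $b$ is an isomorphism away from $E$; (ii) compute $H^i(\widetilde C_2\setminus E,\mathbb Z_l)$ from the Gysin sequence for $E\hookrightarrow\widetilde C_2$ using Proposition 5.2 and the stated cohomology of $C$ — here the map $H^{i-2}(C)(-1)\to H^i(\widetilde C_2)$ is the Gysin pushforward, which one identifies with the inclusion of the $\mathbb Z_l$-summand corresponding to the exceptional class, so $\widetilde C_2\setminus E$ has the cohomology of affine-type: $\mathbb Z_l$ in degree $0$, $\mathbb Z_l^2$ in degree $2$, $0$ else (it is, after all, a line bundle over $C$ minus its zero section restricted... more precisely an $\mathbb A^1$-bundle over the complement, deformation-retracting appropriately); (iii) feed this into the long exact sequence of the pair $(C_2,C_2\setminus\{pt\})$, using that the cone point has a contractible (for étale purposes, cohomologically trivial punctured-neighbourhood-free) local structure — equivalently use the Gysin/localisation triangle $H^{i}_{\{pt\}}(C_2)\to H^i(C_2)\to H^i(C_2\setminus\{pt\})$ and compute the local cohomology $H^i_{\{pt\}}(C_2)$ from the cohomology of the punctured cone, which is $H^i(\widetilde C_2\setminus E)$ again; (iv) assemble to get $H^i(C_2,\mathbb Z_l)$ is $\mathbb Z_l$ for $i=0,2,6$, $\mathbb Z_l^2$ for $i=4$, zero otherwise. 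For part (b), note $b^*\eta$ is the hyperplane class on $\widetilde C_2$, which together with $E$ generates $H^2(\widetilde C_2)\cong\mathbb Z_l^3$; since $b^*\colon H^2(C_2)\to H^2(\widetilde C_2)$ is injective with image a $\mathbb Z_l$-direct summand not containing a multiple of $[E]$ beyond what comes from the hyperplane, $\eta$ must generate $H^2(C_2)(-1)$.

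A cleaner variant I would actually prefer: use the proper pushforward. Since $b$ is proper with $Rb_*\mathbb Z_l$ equal to $\mathbb Z_l$ away from the cone point, and over the cone point the stalk of $R^jb_*\mathbb Z_l$ is $H^j(E,\mathbb Z_l)=H^j(C,\mathbb Z_l)$, there is a triangle relating $H^*(C_2)$, $H^*(\widetilde C_2)$ and the "defect" supported at the point, namely the cone of $\mathbb Z_l\to R b_*\mathbb Z_l$ which is a skyscraper with cohomology $\widetilde H^{j-1}(C,\mathbb Z_l)$ shifted appropriately. This gives a long exact sequence $\cdots\to H^{i-1}(C,\mathbb Z_l)/\langle\text{from pt}\rangle\to H^i(C_2,\mathbb Z_l)\to H^i(\widetilde C_2,\mathbb Z_l)\to \widetilde H^i(C,\mathbb Z_l)\to\cdots$; the surjection $H^i(\widetilde C_2)\to H^i(C)$ in degrees $2,4$ is restriction to $E$, which sends the hyperplane class to the ample generator and kills the "extra" ruling class appropriately, so its kernel is exactly $\mathbb Z_l$ in degree $2$ and $\mathbb Z_l^2$ in degree $4$, while the connecting contribution vanishes. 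This yields the result and simultaneously shows $\eta$ (the image of the hyperplane class) generates $H^2(C_2)(-1)$.

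The main obstacle I anticipate is bookkeeping the Gysin/restriction maps on $H^2$ and $H^4$ of $\widetilde C_2$ precisely — i.e.\ correctly identifying how the rank-$3$ group $H^2(\widetilde C_2,\mathbb Z_l)$ (spanned by the hyperplane class and, say, two ruling classes, or the hyperplane class and the exceptional class modulo relations) maps under restriction to $E=C$ and under pushforward to $C_2$, and checking that all connecting maps in the chosen long exact sequence vanish so that no unwanted extension or cancellation occurs. Once the maps are correctly identified (using that $\widetilde C_2\to C$ is a $\mathbb P^1$-bundle and that $E\hookrightarrow\widetilde C_2$ is a section-type subvariety with known normal bundle), the rest is immediate from Proposition 5.2 and the cohomology of $C$.
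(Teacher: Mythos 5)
Your preferred (``cleaner'') variant is correct and is essentially the paper's own argument in different packaging: the paper works with the exact sequence of sheaves $0\to j_!\mathbb Z_l\to\mathbb Z_l\to i_*\mathbb Z_l\to 0$ on $\widetilde C_2$ and then pushes down along $b$ (using $b_*j_!\mathbb Z_l=j'_!\mathbb Z_l$ with vanishing higher direct images), which is exactly your triangle $\mathbb Z_l\to Rb_*\mathbb Z_l\to Q$ with $Q$ a skyscraper at the vertex; in both versions the decisive input is that restriction $H^*(\widetilde C_2,\mathbb Z_l)\to H^*(E,\mathbb Z_l)$ is split surjective because $E\cong C$ is a section of the $\mathbb P^1$-bundle $p\colon\widetilde C_2\to C$, so all connecting maps die and $H^i(C_2)$ is the kernel of restriction (rank $3-2=1$ in degree $2$, rank $3-1=2$ in degree $4$), and part $(b)$ follows in both treatments by pairing $\eta$ with a ruling line of the cone. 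One genuine error to flag in your discarded first variant, step (ii): $\widetilde C_2\setminus E\cong C_2\setminus\{\mathrm{pt}\}$ is the total space of a line bundle over the quadric surface $C$, hence has the cohomology of $C$ itself, so $H^4(\widetilde C_2\setminus E,\mathbb Z_l)\cong\mathbb Z_l$ rather than $0$; feeding your stated answer into the localisation sequence would corrupt $H^4(C_2)$ or $H^5(C_2)$. (Relatedly, the ``Thom space / deformation retract'' heuristics should be replaced by honest \'etale statements --- homotopy invariance for $\mathbb A^1$-bundles and the Gysin triangle --- as you implicitly do in the second variant.) Since you explicitly prefer the second route, the proposal as a whole stands.
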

\begin{proof} Let $i:C\rightarrow \widetilde C_2$ be the section at infinity of the $\mathbb P^1$-bundle $\widetilde C_2\rightarrow C$. Then the image of $i$ is the exceptional divisor of the blow-up $b:\widetilde C_2\rightarrow C_2$. Let $U\subset \widetilde C_2$ be the complement of $i(C)$ and let $j:U\rightarrow \widetilde C_2$ be the inclusion map. Because the functor $i_*$ is exact, we have $H^*(C,\mathbb Z_l)=H^*(\widetilde C_2,i_*\mathbb Z_l)$. Therefore the long cohomological exact sequence associated to the short exact sequence:
$$\begin{CD} 0@>>>j_!\mathbb Z_l@>>>
\mathbb Z_l@>>>i_*\mathbb Z_l@>>>0\end{CD}$$
looks as follows:
$$\begin{CD} 0@>>>\mathbb Z_l@>\alpha>>
\mathbb Z_l@>>>H^1(\widetilde C_2,j_!\mathbb Z_l)@>>>
0@>>>0
\end{CD}$$
$$\begin{CD}
@>>>H^2(\widetilde C_2,j_!\mathbb Z_l)@>>>
\mathbb Z_l^3
@>\beta>>\mathbb Z_l^2@>>>
H^3(\widetilde C_2,j_!\mathbb Z_l)@>>>0
\end{CD}$$
$$\begin{CD}
@>>>0
@>>>H^4(\widetilde C_2,j_!\mathbb Z_l)@>>>\mathbb Z_l^3
@>\gamma>>\mathbb Z_l
@>>>H^5(\widetilde C_2,j_!\mathbb Z_l)
\end{CD}$$
$$\begin{CD}
@>>>0
@>>>0
@>>>
H^6(\widetilde C_2,j_!\mathbb Z_l)@>>>\mathbb Z_l@>>>0.\end{CD}$$
The map $\alpha$ is an isomorphism. The composition of $i:C\rightarrow\widetilde C_2$ and the projection $p:\widetilde C_2\rightarrow C$ is the identity map of $C$, and hence the maps $\beta,\gamma$ are surjective. Therefore
$$H^i(\widetilde C_2,j_!\mathbb Z_l)=\left\{\begin{array}{ll}
\mathbb Z_l,&\text{if $i=2,6$,}\\
\mathbb Z_l^2,&\text{if $i=4$,}\\
0,&\text{otherwise.}\end{array}\right.$$
Let $V\subset C_2$ be the complement of the cusp of the cone $C_2$ and let $j':V\rightarrow C_2$ be the inclusion map. Clearly $b_*j_!\mathbb Z_l=j'_!\mathbb Z_l$ and the higher direct image sheaves $R^ib_*j_!\mathbb Z_l$ vanish for every $i>0$ by the proper base change theorem. Therefore $H^*(C_2,j'_!\mathbb Z_l)=H^*(\widetilde C_2,j_!\mathbb Z_l)$. Since the complement of $V$ is a point, we have:
$$H^i(C_2,\mathbb Z_l)=\left\{\begin{array}{ll}
H^i(C_2,j'_!\mathbb Z_l)\oplus\mathbb Z_l,&\text{if $i=0$,}\\
H^i(C_2,j'_!\mathbb Z_l),&\text{otherwise,}\end{array}\right.$$
Claim $(a)$ is now clear. Because the pull-back of $\eta$ onto any line lying on $C$ is still a generator, claim $(b)$ must hold, too.
\end{proof}
Let $\widetilde C_3$ be the blow-up of the quadric hypersurface $C_3\subset\mathbb P^4_{\overline k}$ given by the homogeneous equation:
$$x_0^2+x_1^2+x_2^2=0$$
at the line $x_0=x_1=x_2=0$. Let $m:\widetilde C_3\rightarrow C_3$ be the blow-up map and let $\eta\in H^2(C_3,\mathbb Z_l)(-1)$ be the class of the hyperplane section.  
\begin{prop} The following holds:
$$H^i(\widetilde C_3,\mathbb Z_l)\cong\left\{\begin{array}{ll}
\mathbb Z_l,&\text{if $i=0,6$,}\\
\mathbb Z_l^2,&\text{if $i=2,4$,}\\
0,&\text{otherwise.}\end{array}\right.$$
\end{prop}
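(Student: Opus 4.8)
The plan is to show that $\widetilde C_3$ is a projective bundle over a conic and to read off its cohomology from the Leray spectral sequence, exactly as for $\widetilde C_2$ above. To begin, note that the singular locus of $C_3=\{x_0^2+x_1^2+x_2^2=0\}\subset\mathbb P^4_{\overline k}$ is the line $\ell=\{x_0=x_1=x_2=0\}$ and that $C_3$ is the cone with vertex $\ell$ over the smooth conic $C'=\{x_0^2+x_1^2+x_2^2=0\}$ inside $\{x_3=x_4=0\}=\mathbb P^2_{\overline k}$; since $C'$ is a smooth conic, $C'\cong\mathbb P^1_{\overline k}$.

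Next I would compute the blow-up explicitly. Realize $\mathrm{Bl}_\ell\mathbb P^4_{\overline k}$ as the incidence variety $\{([x],[y])\in\mathbb P^4_{\overline k}\times\mathbb P^2_{\overline k}:x_iy_j=x_jy_i,\ 0\le i,j\le 2\}$, whose second projection is a $\mathbb P^2$-bundle over $\mathbb P^2_{\overline k}$. As the ideal of $\ell$ in $C_3$ is the restriction of the ideal of $\ell$ in $\mathbb P^4_{\overline k}$, the blow-up $\widetilde C_3$ is the strict transform of $C_3$ in $\mathrm{Bl}_\ell\mathbb P^4_{\overline k}$; substituting $x_i=(x_0/y_0)y_i$ on the chart $\{y_0\neq0\}$ (and cyclically on the others) shows this strict transform to be $\{([x],[y]):y_0^2+y_1^2+y_2^2=0\}=\{([x],[y]):[y]\in C',\ (x_0:x_1:x_2)\ \text{proportional to}\ (y_0:y_1:y_2)\}$. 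Thus the second projection $p\colon\widetilde C_3\to C'$ has fibre $\{[\lambda y_0:\lambda y_1:\lambda y_2:x_3:x_4]\}\cong\mathbb P^2_{\overline k}$ over $[y]$, and normalizing $(y_0,y_1,y_2)$ on the two standard affine charts of $C'\cong\mathbb P^1_{\overline k}$ exhibits $p$ as a Zariski-locally trivial $\mathbb P^2$-bundle; in particular $\widetilde C_3$ is smooth and genuinely resolves $C_3$.

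Since $p$ is smooth and proper with fibre $\mathbb P^2_{\overline k}$, proper base change gives $p_*\mathbb Z_l\cong\mathbb Z_l$, $R^jp_*\mathbb Z_l\cong\mathbb Z_l$ for $j=2,4$, and $R^jp_*\mathbb Z_l=0$ otherwise, and these sheaves are lisse, hence constant, on $\mathbb P^1_{\overline k}$. All differentials of the Leray spectral sequence $H^i(C',R^jp_*\mathbb Z_l)\Rightarrow H^{i+j}(\widetilde C_3,\mathbb Z_l)$ vanish for parity and dimension reasons (the nonzero $E_2$-terms sit in even rows, while every differential lands in an odd row or in $H^{>2}$ of the curve $C'$), so the sequence degenerates; together with $H^i(\mathbb P^1_{\overline k},\mathbb Z_l)=\mathbb Z_l$ for $i=0,2$ and $0$ otherwise (Theorem 1.12 of \cite{Re1} and the discussion preceding it), this yields $H^n(\widetilde C_3,\mathbb Z_l)\cong\bigoplus_{i=0}^{2}H^{n-2i}(\mathbb P^1_{\overline k},\mathbb Z_l)$, which is $\mathbb Z_l$ for $n=0,6$, $\mathbb Z_l^2$ for $n=2,4$, and $0$ otherwise, as claimed. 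The step that requires care is the middle one — the explicit strict-transform computation and the verification that the result is a Zariski-locally trivial $\mathbb P^2$-bundle over $C'$ rather than a more complicated birational model; once that is in hand, the rest is the routine Leray argument already used for $\widetilde C_2$.
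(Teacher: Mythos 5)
Your proof is correct and takes essentially the same route as the paper: both identify $\widetilde C_3$ as a $\mathbb P^2$-bundle over $\mathbb P^1_{\overline k}$ and read off the cohomology from the degenerate Leray spectral sequence. The only (minor) difference is that you establish the constancy of $R^2p_*\mathbb Z_l$ and $R^4p_*\mathbb Z_l$ by an explicit strict-transform computation exhibiting Zariski-local triviality, whereas the paper invokes the existence of a section coming from the vanishing of the Brauer group of $\mathbb P^1_{\overline k}$.
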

\begin{proof} The variety $\widetilde C_3$ is a $\mathbb P^2$-bundle over $\mathbb P^1_{\overline k}$. Let $p:\widetilde C_3\rightarrow\mathbb P^1_{\overline k}$ be this fibration. By the proper base change theorem $p_*\mathbb Z_l\cong\mathbb Z_l$, and we have $R^ip_*\mathbb Z_l=0$ for every $i\neq0,2,4$. Moreover $R^2p_*\mathbb Z_l\cong R^4p_*\mathbb Z_l\cong\mathbb Z_l$, again by the proper base change theorem and since the associated fibration of hyperplane sections has a section, as the Brauer group of $\mathbb P^1_{\overline k}$ is trivial. Therefore the Leray spectral sequence $H^i(\mathbb P^1_{\overline k},R^jp_*\mathbb Z_l)\Rightarrow H^{i+j}(\widetilde C_3,\mathbb Z_l)$ degenerates and the claim follows. 
\end{proof} 
\begin{prop} The following holds:
\begin{enumerate}
\item[$(a)$] we have:
$$H^i(C_3,\mathbb Z_l)\cong\left\{\begin{array}{ll}
\mathbb Z_l,&\text{if $i=0,2,4,6$,}\\
0,&\text{otherwise,}\end{array}\right.$$
\item[$(b)$] the class $\eta$ generates $H^2(C_3,\mathbb Z_l)(-1)$ as a $\mathbb Z_l$-module.
\end{enumerate} 
\end{prop}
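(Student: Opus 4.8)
The plan is to imitate the proof of Proposition 5.4, with the vertex point there replaced by the vertex line $L=\{x_0=x_1=x_2=0\}$ of the cone $C_3$ over the smooth plane conic $\mathcal C=\{x_0^2+x_1^2+x_2^2=0\}$ in $\mathbb P^4_{\overline k}$.

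First I would pin down the exceptional divisor $E$ of $m:\widetilde C_3\to C_3$. Zariski-locally along $L$ (e.g.\ in the chart $x_3\neq0$) the pair $(C_3,L)$ is the product of the affine cone over $\mathcal C$, pointed at its vertex, with an $\mathbb A^1$-factor, $L$ being the $\mathbb A^1$-factor; blowing up shows $E\cong\mathcal C\times L\cong\mathbb P^1_{\overline k}\times\mathbb P^1_{\overline k}$, in such a way that the composite $E\hookrightarrow\widetilde C_3\xrightarrow{p}\mathbb P^1_{\overline k}$ (the $\mathbb P^2$-bundle projection of Proposition 5.5) is the projection to the first factor and identifies $E$ with the subbundle cutting out a line in every fibre $\mathbb P^2$ of $p$. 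Hence $H^i(E,\mathbb Z_l)$ is $\mathbb Z_l$ for $i=0,4$, is $\mathbb Z_l^2$ for $i=2$, and is $0$ otherwise. More importantly, if $\xi\in H^2(\widetilde C_3,\mathbb Z_l)$ denotes the relative hyperplane class, then by the projective bundle formula already used in Proposition 5.5 the group $H^*(\widetilde C_3,\mathbb Z_l)$ is free over $H^*(\mathbb P^1_{\overline k},\mathbb Z_l)$ with basis $1,\xi,\xi^2$, while $H^*(E,\mathbb Z_l)$ is free over $H^*(\mathbb P^1_{\overline k},\mathbb Z_l)$ with basis $1,\xi|_E$ (as $\xi$ restricts to a generator on each fibre of $E\to\mathbb P^1_{\overline k}$); since the restriction map is $H^*(\mathbb P^1_{\overline k},\mathbb Z_l)$-linear and sends $1\mapsto1$, $\xi\mapsto\xi|_E$, it is surjective in every degree.

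The rest is the diagram chase of Proposition 5.4, run twice. Put $U=\widetilde C_3\setminus E$ with open immersion $j$. The surjectivity just proved makes the long exact sequence attached to $0\to j_!\mathbb Z_l\to\mathbb Z_l\to(i_E)_*\mathbb Z_l\to0$ split into short exact sequences $0\to H^i(\widetilde C_3,j_!\mathbb Z_l)\to H^i(\widetilde C_3,\mathbb Z_l)\to H^i(E,\mathbb Z_l)\to0$, and feeding in Proposition 5.5 gives $H^i(\widetilde C_3,j_!\mathbb Z_l)=\mathbb Z_l$ for $i=4,6$ and $0$ otherwise. Next put $V=C_3\setminus L$ with open immersion $j'$; since $m$ is an isomorphism over $V$ and $m^{-1}(L)=E$, the proper base change theorem gives $m_*j_!\mathbb Z_l=j'_!\mathbb Z_l$ and $R^qm_*j_!\mathbb Z_l=0$ for $q>0$ (the stalk at a point of $L$ is the cohomology of a fibre sitting inside $E$, with coefficients in $j_!\mathbb Z_l|_E=0$), so $H^*(C_3,j'_!\mathbb Z_l)=H^*(\widetilde C_3,j_!\mathbb Z_l)$. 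Finally the long exact sequence of $0\to j'_!\mathbb Z_l\to\mathbb Z_l\to(i_L)_*\mathbb Z_l\to0$, together with $H^*(L,\mathbb Z_l)=H^*(\mathbb P^1_{\overline k},\mathbb Z_l)$, yields part $(a)$ after an easy bookkeeping; in particular it exhibits the restriction $H^2(C_3,\mathbb Z_l)\to H^2(L,\mathbb Z_l)$ as an isomorphism $\mathbb Z_l\xrightarrow{\sim}\mathbb Z_l$. For part $(b)$, the hyperplane class $\eta$ restricts on the line $L\cong\mathbb P^1_{\overline k}$ to the class of one reduced point, a generator of $H^2(L,\mathbb Z_l)(-1)$; since that restriction is an isomorphism, $\eta$ generates $H^2(C_3,\mathbb Z_l)(-1)$.

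The only step that is not a formality is the geometric description of $E$ and of the restriction $H^*(\widetilde C_3,\mathbb Z_l)\to H^*(E,\mathbb Z_l)$ carried out in the second paragraph; the subtlety is that $L\subset C_3$ is not a regular embedding, so the usual blow-up formula for cohomology is unavailable and one must argue by hand. Everything downstream of that is identical, mutatis mutandis, to the proof of Proposition 5.4.
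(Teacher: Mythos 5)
Your proof is correct and follows essentially the same route as the paper's: compute the cohomology of the exceptional divisor (a $\mathbb P^1$-bundle over $\mathbb P^1_{\overline k}$, which you identify as $\mathcal C\times L$), show the restriction from $\widetilde C_3$ is surjective, and then run the two $j_!$-exact sequences exactly as in the $C_2$ case. The only cosmetic differences are that you establish surjectivity onto $H^*(E,\mathbb Z_l)$ via the projective bundle formula and the relative hyperplane class where the paper invokes functoriality of the (degenerate) Leray spectral sequences for the compatible fibrations $p$ and $p'$, and that your treatment of part $(b)$, via the isomorphism $H^2(C_3,\mathbb Z_l)\rightarrow H^2(L,\mathbb Z_l)$, is a more explicit version of the paper's one-line argument.
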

\begin{proof} Let $C'\subset\widetilde C_3$ be the exceptional divisor of the blow-up $m:\widetilde C_3\rightarrow C_3$ and let $p':C\rightarrow\mathbb P^1_{\overline k}$ be the restriction of the fibration $p:\widetilde C_3\rightarrow\mathbb P^1_{\overline k}$ onto $C'$. Then $p'$ equips $C'$ with the structure of $\mathbb P^1$-bundle over $\mathbb P^1_{\overline k}$. It has a section as the Brauer group of $\mathbb P^1_{\overline k}$ is trivial. Therefore by repeating the argument in the proof of Proposition 5.2 we get that the Leray spectral sequence $H^i(\mathbb P^1_{\overline k},R^jp'_*\mathbb Z_l)\Rightarrow H^{i+j}(C',\mathbb Z_l)$ degenerates and
$$H^i(C',\mathbb Z_l)\cong\left\{\begin{array}{ll}
\mathbb Z_l,&\text{if $i=0,4$,}\\
\mathbb Z_l^2,&\text{if $i=2$,}\\
0,&\text{otherwise.}\end{array}\right.$$
Let $i:C'\rightarrow\widetilde C_3$ be the inclusion map; it induces isomorphisms $p_*\mathbb Z_l\rightarrow p'_*\mathbb Z_l$ and $R^2p_*\mathbb Z_l\rightarrow R^2p'_*\mathbb Z_l$ by the proper base change theorem. Therefore the pull-back maps $i^*:H^2(\widetilde C_3,\mathbb Z_l)\rightarrow H^2(C',\mathbb Z_l)$ and $i^*:H^4(\widetilde C_3,\mathbb Z_l)\rightarrow H^4(C',\mathbb Z_l)$ are surjective by the functoriality of the Leray spectral sequence and by the above.

Let $U\subset\widetilde C_3$ be the complement of $C'$ and let $j:U\rightarrow\widetilde C_3$ be the inclusion map. By the proper base change theorem $H^*(C',\mathbb Z_l)=H^*(\widetilde C_3,i_*\mathbb Z_l)$. Therefore the long cohomological exact sequence associated to the short exact sequence:
$$\begin{CD} 0@>>>j_!\mathbb Z_l@>>>
\mathbb Z_l@>>>i_*\mathbb Z_l@>>>0\end{CD}$$
looks as follows:
$$\begin{CD} 0@>>>\mathbb Z_l@>\alpha>>
\mathbb Z_l@>>>H^1(\widetilde C_3,j_!\mathbb Z_l)@>>>
0@>>>0
\end{CD}$$
$$\begin{CD}
@>>>H^2(\widetilde C_3,j_!\mathbb Z_l)@>>>
\mathbb Z_l^2
@>\beta>>\mathbb Z_l^2@>>>
H^3(\widetilde C_3,j_!\mathbb Z_l)@>>>0
\end{CD}$$
$$\begin{CD}
@>>>0
@>>>H^4(\widetilde C_3,j_!\mathbb Z_l)@>>>\mathbb Z_l^2
@>\gamma>>\mathbb Z_l
@>>>H^5(\widetilde C_3,j_!\mathbb Z_l)
\end{CD}$$
$$\begin{CD}
@>>>0
@>>>0
@>>>
H^6(\widetilde C_3,j_!\mathbb Z_l)@>>>\mathbb Z_l@>>>0.\end{CD}$$
The map $\alpha$ is an isomorphism, and the maps $\beta,\gamma$ are surjective, as we saw above. Therefore
$$H^i(\widetilde C_3,j_!\mathbb Z_l)=\left\{\begin{array}{ll}
\mathbb Z_l,&\text{if $i=4,6$,}\\
0,&\text{otherwise.}\end{array}\right.$$
Let $V\subset C_3$ be the complement of the line $x_0=x_1=x_2$ and let $j':V\rightarrow C_3$ be the inclusion map. Clearly $m_*j_!\mathbb Z_l=j'_!\mathbb Z_l$ and the higher direct image sheaves $R^im_*j_!\mathbb Z_l$ vanish for every $i>0$ by the proper base change theorem. Therefore $H^*(C_3,j'_*\mathbb Z_l)=H^*(\widetilde C_3,j_!\mathbb Z_l)$. Let $V'\subset C_3$ be the complement of $V$ and let $i':V'\rightarrow C_3$ be the inclusion map. Then $V'$ is isomorphic to $\mathbb P^1_{\overline k}$. Therefore the long cohomological exact sequence associated to the short exact sequence:
$$\begin{CD} 0@>>>j'_!\mathbb Z_l@>>>
\mathbb Z_l@>>>i'_*\mathbb Z_l@>>>0\end{CD}$$
looks as follows:
$$\begin{CD} 0@>>>H^0(C_3,\mathbb Z_l)@>>>
\mathbb Z_l@>>>0@>>>H^1(C_3,\mathbb Z_l)@>>>0
\end{CD}$$
$$\begin{CD}
@>>>0@>>>
H^2(C_3,\mathbb Z_l)
@>>>\mathbb Z_l@>>>
0@>>>H^3(C_3,\mathbb Z_l)
\end{CD}$$
$$\begin{CD}
@>>>0
@>>>\mathbb Z_l@>>>H^4(C_3,\mathbb Z_l)
@>>>0
@>>>0
\end{CD}$$
$$\begin{CD}
@>>>H^5(C_3,\mathbb Z_l)
@>>>0
@>>>\mathbb Z_l@>>>H^6(C_3,\mathbb Z_l)@>>>0.\end{CD}$$
Claim $(a)$ follows. Because the pull-back of $\eta$ onto any line lying on $C_3$ is still a generator, claim $(b)$ holds, too.
\end{proof}
Let $C_4$ be the quadric hypersurface $\subset\mathbb P^4_{\overline k}$ given by the homogeneous equation:
$$x_0x_1=0$$
$[x_0:x_1:x_2:x_3:x_4]$ are the homogeneous coordinates on $\mathbb P^4_{\overline k}$ and let $\eta\in H^2(C_4,\mathbb Z_l)(-1)$ be the class of the hyperplane section. 
\begin{prop} The following holds:
\begin{enumerate}
\item[$(a)$] we have 
$$H^i(C_4,\mathbb Z_l)\cong\left\{\begin{array}{ll}
\mathbb Z_l,&\text{if $i=0,2,4$,}\\
\mathbb Z_l^2,&\text{if $i=6$,}\\
0,&\text{otherwise.}\end{array}\right.$$
\item[$(b)$] the class $\eta$ generates $H^2(C_4,\mathbb Z_l)(-1)$ as a $\mathbb Z_l$-module.
\end{enumerate} 
\end{prop}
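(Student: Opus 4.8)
The plan is to use that $C_4$ is \emph{reducible}: it is the union $C_4 = H_0 \cup H_1$ of the hyperplanes $H_0 = \{x_0 = 0\}$ and $H_1 = \{x_1 = 0\}$ in $\mathbb P^4_{\overline k}$, each isomorphic to $\mathbb P^3_{\overline k}$, meeting along the plane $H_0 \cap H_1 = \{x_0 = x_1 = 0\} \cong \mathbb P^2_{\overline k}$. Let $a_0, a_1$ and $a_{01}$ denote the closed immersions of $H_0$, $H_1$ and $H_0 \cap H_1$ into $C_4$. First I would observe that the sequence of sheaves
$$\begin{CD} 0 @>>> \mathbb Z_l @>>> (a_0)_*\mathbb Z_l \oplus (a_1)_*\mathbb Z_l @>>> (a_{01})_*\mathbb Z_l @>>> 0 \end{CD}$$
on $C_4$ is exact (one checks this on geometric stalks separately over $H_0 - H_1$, $H_1 - H_0$ and $H_0 \cap H_1$, first for the finite coefficients $\mathbb Z/l^n\mathbb Z$ and then passing to the inverse limit, for which the relevant $\varprojlim^1$ terms vanish by finiteness). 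Since closed immersions have exact pushforward, this yields the Mayer--Vietoris long exact sequence
$$\cdots \to H^i(C_4,\mathbb Z_l) \to H^i(H_0,\mathbb Z_l) \oplus H^i(H_1,\mathbb Z_l) \to H^i(H_0 \cap H_1,\mathbb Z_l) \to H^{i+1}(C_4,\mathbb Z_l) \to \cdots.$$

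Next I would feed in the standard computation $H^i(\mathbb P^3_{\overline k},\mathbb Z_l) = \mathbb Z_l$ for $i \in \{0,2,4,6\}$ (and $0$ otherwise) and $H^i(\mathbb P^2_{\overline k},\mathbb Z_l) = \mathbb Z_l$ for $i \in \{0,2,4\}$ (and $0$ otherwise), together with the fact that the restriction map $H^i(H_j,\mathbb Z_l) \to H^i(H_0 \cap H_1,\mathbb Z_l)$ attached to the hyperplane inclusion $\mathbb P^2_{\overline k} \hookrightarrow \mathbb P^3_{\overline k}$ carries the $m$-th power of the hyperplane class to the $m$-th power of the hyperplane class, hence is an isomorphism for $i = 0, 2, 4$. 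Thus the middle map $H^i(H_0) \oplus H^i(H_1) \to H^i(H_0 \cap H_1)$, which is $(\alpha,\beta) \mapsto \alpha|_{H_0 \cap H_1} - \beta|_{H_0 \cap H_1}$, is the surjection $\mathbb Z_l^2 \twoheadrightarrow \mathbb Z_l$ with kernel the diagonal $\mathbb Z_l$ for $i = 0,2,4$, is $\mathbb Z_l^2 \to 0$ for $i = 6$, and is $0 \to 0$ otherwise. Since $H^{\mathrm{odd}}(H_j,\mathbb Z_l) = 0$, every stretch of the long exact sequence in odd degree reads $0 \to H^{2m+1}(C_4,\mathbb Z_l) \to 0$, so all odd cohomology of $C_4$ vanishes; and the even-degree stretches $0 \to H^{2m}(C_4,\mathbb Z_l) \to \mathbb Z_l^2 \to \mathbb Z_l \to 0$ for $m = 0,1,2$ together with $0 \to H^6(C_4,\mathbb Z_l) \to \mathbb Z_l^2 \to 0$ give $H^{2m}(C_4,\mathbb Z_l) = \mathbb Z_l$ for $m = 0,1,2$ and $H^6(C_4,\mathbb Z_l) = \mathbb Z_l^2$. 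This is claim $(a)$.

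Finally, for claim $(b)$, I would note that the identification of $H^2(C_4,\mathbb Z_l)$ with the diagonal in $H^2(H_0,\mathbb Z_l) \oplus H^2(H_1,\mathbb Z_l) = \mathbb Z_l \oplus \mathbb Z_l$ shows that the composite of $H^2(C_4,\mathbb Z_l) \hookrightarrow H^2(H_0,\mathbb Z_l) \oplus H^2(H_1,\mathbb Z_l)$ with the first projection is an isomorphism, and it sends the class $\eta$ of the hyperplane section to its restriction $\eta|_{H_0}$, which is the hyperplane class of $H_0 \cong \mathbb P^3_{\overline k}$ and so generates $H^2(H_0,\mathbb Z_l)(-1)$. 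Hence $\eta$ generates $H^2(C_4,\mathbb Z_l)(-1)$. The argument is essentially formal once the Mayer--Vietoris sequence is available; the only steps needing a modicum of care are the verification that the restriction maps $H^i(\mathbb P^3_{\overline k}) \to H^i(\mathbb P^2_{\overline k})$ are isomorphisms on hyperplane classes for $i \leq 4$ and the degree bookkeeping showing that no odd-degree cohomology is created, and I do not anticipate any substantive obstacle.
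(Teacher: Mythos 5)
Your proof is correct and follows essentially the same route as the paper: the decomposition $C_4=\{x_0=0\}\cup\{x_1=0\}$ into two copies of $\mathbb P^3_{\overline k}$ meeting in a $\mathbb P^2_{\overline k}$, the Mayer--Vietoris long exact sequence, and the surjectivity of the restriction maps in even degrees. Your treatment of part $(b)$ via the diagonal embedding $H^2(C_4,\mathbb Z_l)\hookrightarrow H^2(H_0,\mathbb Z_l)\oplus H^2(H_1,\mathbb Z_l)$ is just a slightly more explicit version of the paper's remark that $\eta$ restricts to a generator on any line in $C_4$.
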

\begin{proof} Let $U=\{x_0=0\}$ and $V=\{x_1=0\}$. Clearly $C_4=U\cup V$. Since $U\cong V\cong\mathbb P^3_{\overline k}$ and $U\cap V\cong\mathbb P^2_{\overline k}$, the Mayer-Vietoris long exact sequence for the triad $(C_4,U,V)$ look as follows:
$$\begin{CD} 0@>>>\mathbb Z_l@>>>
\mathbb Z_l^2@>\alpha>>\mathbb Z_l
@>>>H^1(C_4,\mathbb Z_l)@>>>
0@>>>0
\end{CD}$$
$$\begin{CD}
@>>>H^2(C_4,\mathbb Z_l)@>>>
\mathbb Z_l^2
@>\beta>>\mathbb Z_l@>>>
H^3(C_4,\mathbb Z_l)@>>>0
\end{CD}$$
$$\begin{CD}
@>>>0
@>>>H^4(C_4,\mathbb Z_l)@>>>\mathbb Z_l^2
@>\gamma>>\mathbb Z_l
@>>>H^5(C_4,\mathbb Z_l)
\end{CD}$$
$$\begin{CD}
@>>>0
@>>>0
@>>>
H^6(C_4,\mathbb Z_l)@>>>\mathbb Z_l^2@>>>0.\end{CD}$$
The maps $\alpha,\beta,\gamma$ are surjective and hence claim $(a)$ holds. Because the pull-back of $\eta$ onto any line lying on $C_4$ is still a generator, claim $(b)$ holds, too.
\end{proof}

\section{Cohomology of the regular model}

For every Spec$(k)$-scheme $S$ let $\overline S$ denote its base change to Spec$(\overline k)$. Similarly for every morphism $m:R\rightarrow S$ of
Spec$(k)$-schemes let $\overline m:\overline R\rightarrow\overline S$ denote the base change to Spec$(\overline k)$. Let $\rho$ be the map introduced in Definition 4.4. 
\begin{lemma} The following holds:
\begin{enumerate}
\item[$(a)$] we have
$$R^i\overline{\rho}_*(\mathbb Z_l)\cong\left\{\begin{array}{ll}
\mathbb Z_l,&\text{if $i=0,2$,}\\
0,&\text{if $i=1,3$.}
\end{array}\right.$$
\item[$(b)$] the stalks of $R^4\overline{\rho}_*(\mathbb Z_l)$ are torsion-free.
\end{enumerate} 
\end{lemma}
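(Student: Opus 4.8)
The plan is to reduce the statement to a fibrewise computation and then to quote Section 5. Since $\rho$ is proper, the proper base change theorem identifies the stalk of $R^i\overline\rho_*(\mathbb Z_l)$ at a geometric point $\overline s$ of $\mathbb P^2_{\overline k}$ with the \'etale cohomology group $H^i(\overline Y_{\overline s},\mathbb Z_l)$ of the corresponding geometric fibre, so I first need to classify those fibres. Every geometric fibre of $\rho$ is the quadric hypersurface in $\mathbb P^4_{\overline k}$ obtained by specialising the form $x_0^2-ax_1^2-fx_2^2+afx_3^2-g_1g_2x_4^2$; as the coefficients of $x_0^2$ and $x_1^2$ are $1$ and $-a\neq0$, this form has some rank $r$ with $2\le r\le5$, and since the base field is algebraically closed the fibre is isomorphic to exactly one of the quadric threefolds $C_1$ (for $r=5$), $C_2$ (for $r=4$), $C_3$ (for $r=3$) or $C_4$ (for $r=2$) studied in Section 5. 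One could spell out the resulting stratification of $\mathbb P^2_{\overline k}$ by the rank (the fibre is smooth precisely off $\{xy=0\}\cup\bigcup_{l\in K}\{l=0\}$, and the lower-rank loci are likewise cut out by the vanishing of the appropriate coefficients), but for the present statement only the list of possible isomorphism types matters; in particular the fact that the total space $Y$ is itself singular over the set $D$ and over $[0:0:1]$ is irrelevant, since proper base change only involves the scheme-theoretic fibres, which remain quadric hypersurfaces of rank at least two.

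Next I would simply read off the low-degree cohomology from Propositions 5.1, 5.3, 5.5 and 5.6: in each of the four cases one has $H^0=\mathbb Z_l$, $H^1=0$, $H^2=\mathbb Z_l$ (generated by the class of a hyperplane section), $H^3=0$, and $H^4$ a free $\mathbb Z_l$-module (namely $\mathbb Z_l$ for $C_1,C_3,C_4$ and $\mathbb Z_l^2$ for $C_2$). It then follows at once that $R^1\overline\rho_*(\mathbb Z_l)$ and $R^3\overline\rho_*(\mathbb Z_l)$ have vanishing stalks, hence vanish, and that every stalk of $R^4\overline\rho_*(\mathbb Z_l)$ is torsion-free, which is part $(b)$.

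For the two non-zero sheaves in part $(a)$ the point is that a generator of each stalk can be chosen globally. For $R^0\overline\rho_*(\mathbb Z_l)=\overline\rho_*(\mathbb Z_l)$ this is automatic: $\overline Y$ is integral — its generic fibre is a geometrically integral smooth quadric threefold over the function field of $\mathbb P^2_{\overline k}$ — and every fibre is connected, so the unit map $\mathbb Z_l\to\overline\rho_*(\mathbb Z_l)$ is an isomorphism on stalks and hence an isomorphism. For $R^2\overline\rho_*(\mathbb Z_l)$ I would observe that the hyperplane section classes on the fibres of $Q_x\to U_x$, $Q_y\to U_y$ and $Q_z\to U_z$ are compatible on overlaps: fibre by fibre, the gluing isomorphisms (4.0.1)--(4.0.3) are projective-linear automorphisms of $\mathbb P^4_{\overline k}$ (given by diagonal matrices), so they act trivially on $H^2$ and carry hyperplane sections to hyperplane sections. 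These classes therefore patch to a global section of $R^2\overline\rho_*(\mathbb Z_l)$ (Tate twists being trivial over $\overline k$) which generates every stalk, so $R^2\overline\rho_*(\mathbb Z_l)\cong\mathbb Z_l$.

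The only genuine content of the argument is the classification of the fibres in the first step, together with the globality of the hyperplane class in the last step; once these are in hand the conclusion is a direct appeal to Section 5 and to proper base change. The point that requires a little care is the one flagged above: one must not confuse the badly singular total space $Y$ near $D$ and near $[0:0:1]$ with the scheme-theoretic fibres there, which are perfectly ordinary quadric threefolds of rank $2$ or $3$ and are covered by the list $C_1,\dots,C_4$.
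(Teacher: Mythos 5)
Your proof is correct and follows essentially the same route as the paper: proper base change reduces everything to the fibrewise computations of Section 5, and the only real content is identifying the geometric fibres with $C_1,\dots,C_4$ and producing a global generator of $R^2\overline{\rho}_*(\mathbb Z_l)$. The paper handles this last point slightly more structurally, by factoring $\rho$ through the $\mathbb P^4$-bundle $\tau:T\rightarrow\mathbb P^2_k$ and using that $R^2\overline{\tau}_*(\mathbb Z_l)$ is constant (the \'etale fundamental group of $\mathbb P^2_{\overline k}$ being trivial) together with part $(b)$ of Propositions 5.1, 5.3, 5.5 and 5.6 to see that $\overline{\iota}^*$ is an isomorphism --- which is exactly the global form of your chart-by-chart patching of hyperplane classes along the linear transition maps (4.0.1)--(4.0.3).
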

\begin{proof} Note that the geometric fibres of $\rho$ are all isomorphic to one of the quadrics $C_1,C_2,C_3,$ or $C_4$. Because they are all connected, we have $R^0\overline{\rho}_*(\mathbb Z_l)\cong\mathbb Z_l$ by the proper base change theorem. Moreover by part $(a)$ of Propositions 5.1, 5.3, 5.5 and 5.6 and the proper base change theorem the stalks of the sheaves $R^1\overline{\rho}_*(\mathbb Z_l)$ and $R^3\overline{\rho}_*(\mathbb Z_l)$ are trivial, while the stalks of $R^4\overline{\rho}_*(\mathbb Z_l)$ are torsion-free. It is clear from the construction of the fibration $\rho:Y\rightarrow\mathbb P^2_k$ that there is a $\mathbb P^4$-bundle $\tau:T\rightarrow\mathbb P^2_k$ and a closed immersion of $\mathbb P^2_k$-schemes $\iota:Y\rightarrow T$ such that $\rho=\tau\circ\iota$. The sheaf $R^2\overline{\tau}_*(\mathbb Z_l)$ is lisse of rank one. Because the \'etale fundamental group of $\mathbb P^2_{\overline k}$ is trivial we get that $R^2\overline{\tau}_*(\mathbb Z_l)\cong\mathbb Z_l$. By part $(b)$ of Propositions 5.1, 5.3, 5.5 and 5.6 and the proper base change theorem the map $\overline{\iota}^*:R^2\overline{\tau}_*(\mathbb Z_l)\rightarrow R^2\overline{\rho}_*(\mathbb Z_l)$ is an isomorphism. 
\end{proof}
\begin{prop} The following holds:
\begin{enumerate}
\item[$(a)$] we have
$$H^i(\overline Y,\mathbb Z_l)\cong\left\{\begin{array}{ll}
\mathbb Z_l,&\text{if $i=0$,}\\
0,&\text{if $i=1,3$,}\\
\mathbb Z_l^2,&\text{if $i=2$.}\end{array}\right.$$
\item[$(b)$] the group $H^4(\overline Y,\mathbb Z_l)$ is torsion-free.
\end{enumerate} 
\end{prop}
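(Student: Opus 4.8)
The plan is to deduce everything from the Leray spectral sequence
$$E_2^{p,q}=H^p(\mathbb P^2_{\overline k},R^q\overline\rho_*\mathbb Z_l)\Longrightarrow H^{p+q}(\overline Y,\mathbb Z_l),$$
fed with Lemma 6.1 and the standard fact that $H^p(\mathbb P^2_{\overline k},\mathbb Z_l)$ equals $\mathbb Z_l$ for $p\in\{0,2,4\}$ and vanishes otherwise. By Lemma 6.1 the sheaf $R^q\overline\rho_*\mathbb Z_l$ is the constant sheaf $\mathbb Z_l$ for $q=0,2$ and is zero for $q=1,3$, so in the range we care about the only nonzero rows of $E_2$ are $q=0$ and $q=2$, each of which reads $(\mathbb Z_l,0,\mathbb Z_l,0,\mathbb Z_l,\dots)$, together with the row $q=4$, of which only the entry $E_2^{0,4}=H^0(\mathbb P^2_{\overline k},R^4\overline\rho_*\mathbb Z_l)$ will intervene.

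For degrees $i\le 3$ I would simply read off the answer. The antidiagonal $p+q=i$ meets only the rows $q=0$ and $q=2$, and since the rows $q=1,3$ vanish (and $H^p(\mathbb P^2_{\overline k},\mathbb Z_l)=0$ for $p$ odd) no differential can touch the surviving entries: this gives $H^0(\overline Y,\mathbb Z_l)=E_2^{0,0}=\mathbb Z_l$ and $H^1(\overline Y,\mathbb Z_l)=H^3(\overline Y,\mathbb Z_l)=0$, while $H^2(\overline Y,\mathbb Z_l)$ is an extension $0\to E_\infty^{2,0}\to H^2(\overline Y,\mathbb Z_l)\to E_\infty^{0,2}\to 0$ with $E_\infty^{2,0}=E_\infty^{0,2}=\mathbb Z_l$; as $\mathbb Z_l$ is free this extension splits, proving $(a)$.

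For $(b)$ the key observation is that on the antidiagonal $p+q=4$ the entries $E_2^{3,1}$ and $E_2^{1,3}$ vanish, so $H^4(\overline Y,\mathbb Z_l)$ carries a filtration whose graded pieces are $E_\infty^{4,0}$, $E_\infty^{2,2}$ and $E_\infty^{0,4}$. Checking differentials (once more using the vanishing of the odd rows and of $H^{\mathrm{odd}}(\mathbb P^2_{\overline k},\mathbb Z_l)$) shows $E_\infty^{4,0}=E_\infty^{2,2}=\mathbb Z_l$ and $E_\infty^{0,4}=E_2^{0,4}$. It then remains to see that $H^0(\mathbb P^2_{\overline k},R^4\overline\rho_*\mathbb Z_l)$ is torsion-free: a torsion global section $s$ would satisfy $l^n s=0$ for some $n$, whence $l^{n-1}s$ is $l$-torsion and therefore vanishes in every stalk by Lemma 6.1$(b)$; since a section of a sheaf is determined by its germs this forces $l^{n-1}s=0$, and iterating gives $s=0$. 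As an iterated extension of torsion-free $\mathbb Z_l$-modules is torsion-free, $H^4(\overline Y,\mathbb Z_l)$ is torsion-free.

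The whole argument is spectral-sequence bookkeeping once Lemma 6.1 is in hand; the one place that requires a genuine (if short) argument rather than bookkeeping is the torsion-freeness of $H^0(\mathbb P^2_{\overline k},R^4\overline\rho_*\mathbb Z_l)$ in the last step, which is precisely where part $(b)$ of Lemma 6.1 enters. I would also be careful to use Lemma 6.1$(a)$ in the sharp form that $R^2\overline\rho_*\mathbb Z_l$ is the constant sheaf $\mathbb Z_l$ — not merely stalkwise isomorphic to $\mathbb Z_l$ — so that $H^2(\mathbb P^2_{\overline k},R^2\overline\rho_*\mathbb Z_l)=\mathbb Z_l$ rather than the cohomology of a possibly nonconstant lisse sheaf.
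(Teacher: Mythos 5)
Your proposal is correct and follows essentially the same route as the paper: the Leray spectral sequence for $\overline\rho$ fed with Lemma 6.1, reading off degrees $\le 3$, splitting the extension in degree $2$, and exhibiting $H^4(\overline Y,\mathbb Z_l)$ as an iterated extension of torsion-free pieces. Your explicit verification that $H^0(\mathbb P^2_{\overline k},R^4\overline\rho_*\mathbb Z_l)$ is torsion-free, and your insistence on knowing $R^2\overline\rho_*\mathbb Z_l$ as a constant sheaf rather than merely stalkwise, are exactly the points the paper relies on (the latter via the embedding into a $\mathbb P^4$-bundle in the proof of Lemma 6.1).
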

\begin{proof} There is a Leray-Serre spectral sequence:
$$E^2_{i,j}=H^i(\mathbb P^2_{\overline k},R^j\overline{\rho}_*(\mathbb Z_l))
\Rightarrow
H^{i+j}(\overline Y,\mathbb Z_l).$$
Clearly $H^0(\overline Y,\mathbb Z_l)=\mathbb Z_l$ since $\overline Y$ is connected. By Lemma 6.1 we have 
$$E^2_{1,0}=H^1(\mathbb P^2_{\overline k},\mathbb Z_l)=0\text{ and }E^2_{0,1}=H^0(\mathbb P^2_{\overline k},0)=0,$$
and hence $H^1(\overline Y,\mathbb Z_l)=0$. Similarly by Lemma 6.1 we have
$$E^2_{3,0}=H^3(\mathbb P^2_{\overline k},\mathbb Z_l)=0,\ E^2_{2,1}=H^2(\mathbb P^2_{\overline k},0)=0,$$
$$E^2_{1,2}=H^1(\mathbb P^2_{\overline k},\mathbb Z_l)=0,\text{ and }E^2_{0,3}=H^0(\mathbb P^2_{\overline k},0)=0,$$
and hence $H^3(\overline Y,\mathbb Z_l)=0$. Moreover
$$E^2_{2,0}=H^2(\mathbb P^2_{\overline k},\mathbb Z_l)=\mathbb Z_l,\ E^2_{1,1}=H^1(\mathbb P^2_{\overline k},0)=0,\ E^2_{0,2}=H^0(\mathbb P^2_{\overline k},\mathbb Z_l)=\mathbb Z_l$$
by Lemma 6.1. However the domain and the range, respectively, of the boundary maps $d^2_{0,1}:E^2_{0,1}\rightarrow E^2_{2,0}$ and $d^2_{0,2}:E^2_{0,2}\rightarrow E^2_{2,1}$ are zero, so we get that
$$E^{\infty}_{2,0}\cong E^2_{2,0}\cong\mathbb Z_l\textrm{ and }
E^{\infty}_{0,2}\cong E^2_{0,2}\cong\mathbb Z_l.$$
Therefore we have $H^2(\overline Y,\mathbb Z_l)\cong\mathbb Z_l^2$. By Lemma 6.1
$$E^2_{4,0}=H^4(\mathbb P^2_{\overline k},\mathbb Z_l)=\mathbb Z_l,\ E^2_{3,1}=H^3(\mathbb P^2_{\overline k},0)=0,$$
$$E^2_{2,2}=H^2(\mathbb P^2_{\overline k},\mathbb Z_l)=\mathbb Z_l,\text{ and }E^2_{1,3}=H^1(\mathbb P^2_{\overline k},0)=0.$$
By the above the groups $E^2_{2,1},E^3_{1,2},E^4_{0,3}$ are all zero. Therefore the images of the boundary maps $d^2_{2,1}:E^2_{2,1}\rightarrow E^2_{4,0},
d^3_{1,2}:E^3_{1,2}\rightarrow E^3_{4,0},d^4_{0,3}:E^4_{0,3}\rightarrow E^4_{4,0}$ are also zero. So $E^{\infty}_{4,0}$ is the subgroup of $E^2_{4,0}$ and hence torsion-free. Because $E^2_{0,3}$ is also zero, for the same reason we get that $E^{\infty}_{2,2}$ is the subgroup of $E^2_{2,2}$ and hence torsion-free, too. By part $(b)$ of Lemma 6.1 the group $E^2_{0,4}=H^0(\mathbb P^2_{\overline k},R^4\overline{\rho}_*(\mathbb Z_l))$ is torsion-free. The group $E^{\infty}_{0,4}$ is a subgroup of $E^2_{0,4}$ so it is also torsion-free. We get that $H^4(\overline Y,\mathbb Z_l)$ has a filtration by torsion-free groups, so it is torsion-free, too. 
\end{proof}
We will continue to use the notation of Lemma 4.1. Let $\rho_1:Y_1\rightarrow\mathbb P^2_k$ be the family
which we get by blowing up the isolated singularities of $Y_{xy}\subset Y$ inside the family $\rho:Y\rightarrow\mathbb P^2_k$.
\begin{prop} The following holds:
\begin{enumerate}
\item[$(a)$] we have
$$H^i(\overline Y_1,\mathbb Z_l)\cong\left\{\begin{array}{ll}
\mathbb Z_l,&\text{if $i=0$,}\\
0,&\text{if $i=1,3$,}\\
\mathbb Z_l^{122},&\text{if $i=2$.}\end{array}\right.$$
\item[$(b)$] the group $H^4(\overline Y_1,\mathbb Z_l)$ is torsion-free.
\end{enumerate}
\end{prop}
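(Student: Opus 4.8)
The plan is to describe the blow-down $b:Y_1\to Y$ explicitly, so that its exceptional locus is a disjoint union of smooth quadric fourfolds, and then to compute $H^*(\overline Y_1,\mathbb Z_l)$ from $H^*(\overline Y,\mathbb Z_l)$ — which is given by Proposition 6.2 — by means of the Mayer--Vietoris sequence attached to this blow-up. By Lemma 4.1 the singular locus of $Y_{xy}$ consists of the isolated points $p_d\in\rho^{-1}(d)$, one for each $d$ in the set $D$ of pairwise intersection points of the $16$ lines of $K$. Since $L$, hence $K$, is in general position, no three of these lines are concurrent, so $D$ consists of $\binom{16}{2}=120$ distinct points, the $p_d$ form a reduced $0$-dimensional closed subscheme $D'\subset Y$, and $Y_1$ is the blow-up of $Y$ along $D'$, with $b$ an isomorphism over $Y\setminus D'$. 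Over $\overline k$ the set $\overline{D'}$ is $120$ reduced points, and by the local computation in the proof of Lemma 4.1 the fibre $E_d$ of $\overline b$ over $p_d$ is the exceptional divisor of the blow-up of $\{x_0^2+x_1^2+x_2^2+x_3^2+t_1t_2=0\}\subset\mathbb A^6_{\overline k}$ at the origin, namely the non-singular quadric fourfold $\{y_0^2+y_1^2+y_2^2+y_3^2+u_1u_2=0\}\subset\mathbb P^5_{\overline k}$. Write $\overline E=\overline b^{-1}(\overline{D'})=\bigsqcup_{d\in D}E_d$. By Theorem 1.12 of \cite{Re1} and the discussion preceding it (the case of a smooth $4$-dimensional quadric), $H^i(E_d,\mathbb Z_l)$ is a free $\mathbb Z_l$-module, equal to $\mathbb Z_l$ for $i=0,2,6,8$, to $\mathbb Z_l^2$ for $i=4$, and to $0$ otherwise; in particular $H^1(E_d,\mathbb Z_l)=H^3(E_d,\mathbb Z_l)=0$, $H^2(E_d,\mathbb Z_l)\cong\mathbb Z_l$, and $H^4(E_d,\mathbb Z_l)$ is torsion-free.

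Next I would invoke the Mayer--Vietoris long exact sequence
$$\cdots\to H^n(\overline Y,\mathbb Z_l)\to H^n(\overline Y_1,\mathbb Z_l)\oplus H^n(\overline{D'},\mathbb Z_l)\to H^n(\overline E,\mathbb Z_l)\to H^{n+1}(\overline Y,\mathbb Z_l)\to\cdots$$
attached to the blow-up square with vertices $\overline E,\overline Y_1,\overline{D'},\overline Y$, in which the first map is $\overline b^*$ on the first summand and restriction on the second, and the second is the difference of the two pull-backs to $\overline E$. One obtains this by comparing the long exact cohomology sequences of $0\to j_!\mathbb Z_l\to\mathbb Z_l\to i_*\mathbb Z_l\to0$ on $\overline Y$ and of the analogous sequence on $\overline Y_1$, where $j$ and $i$ are the inclusions of $\overline Y\setminus\overline{D'}$ and of $\overline{D'}$, and $j_1,i_1$ the corresponding maps for $\overline Y_1$ and $\overline E$: as $\overline b$ is proper and restricts to an isomorphism from $\overline Y_1\setminus\overline E$ onto $\overline Y\setminus\overline{D'}$, the proper base change theorem gives $R\overline b_*(j_1)_!\mathbb Z_l=j_!\mathbb Z_l$ (one uses that $(j_1)_!\mathbb Z_l$ restricts to the zero sheaf on each $E_d=\overline b^{-1}(p_d)$), so the two sequences share the term $H^n(\overline Y,j_!\mathbb Z_l)$, and a routine diagram chase yields the display. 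At each finite level $\mathbb Z/l^m\mathbb Z$ this is immediate from proper base change, and passing to the inverse limit is harmless since all the groups involved are finite, so the relevant $\varprojlim^1$ terms vanish.

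It remains to feed in the known groups. Since $\overline{D'}$ is $120$ reduced points, $H^0(\overline{D'},\mathbb Z_l)=\mathbb Z_l^{120}$ and $H^n(\overline{D'},\mathbb Z_l)=0$ for $n>0$; since $\overline E$ is a disjoint union of $120$ smooth quadric fourfolds, $H^n(\overline E,\mathbb Z_l)$ is free, equal to $\mathbb Z_l^{120}$ for $n=0,2,6,8$, to $\mathbb Z_l^{240}$ for $n=4$, and to $0$ otherwise. Combined with Proposition 6.2, the sequence gives in the relevant degrees: $H^0(\overline Y_1,\mathbb Z_l)=\mathbb Z_l$ as $\overline Y_1$ is connected; in degree $1$ the exact stretch $0=H^1(\overline Y)\to H^1(\overline Y_1)\to H^1(\overline E)=0$ forces $H^1(\overline Y_1,\mathbb Z_l)=0$, and in the same way in degree $3$ one gets $H^3(\overline Y_1,\mathbb Z_l)=0$; in degree $2$ the stretch $H^1(\overline E)\to H^2(\overline Y)\to H^2(\overline Y_1)\to H^2(\overline E)\to H^3(\overline Y)$ reduces to a short exact sequence $0\to\mathbb Z_l^2\to H^2(\overline Y_1,\mathbb Z_l)\to\mathbb Z_l^{120}\to0$, which splits since its quotient is free, giving $H^2(\overline Y_1,\mathbb Z_l)\cong\mathbb Z_l^{122}$ and proving $(a)$. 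For $(b)$, the stretch $0=H^3(\overline E)\to H^4(\overline Y)\to H^4(\overline Y_1)\to H^4(\overline E)$ exhibits $H^4(\overline Y_1,\mathbb Z_l)$ as an extension of a submodule of the free module $H^4(\overline E,\mathbb Z_l)\cong\mathbb Z_l^{240}$ by the group $H^4(\overline Y,\mathbb Z_l)$, which is torsion-free by part $(b)$ of Proposition 6.2; as an extension of torsion-free $\mathbb Z_l$-modules is torsion-free, $H^4(\overline Y_1,\mathbb Z_l)$ is torsion-free.

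The step I expect to require the most care is the justification of the Mayer--Vietoris sequence, since the centre of the blow-up $b$ lies inside the singular locus of $Y$: one must verify $R\overline b_*(j_1)_!\mathbb Z_l=j_!\mathbb Z_l$ via proper base change and control the passage from $\mathbb Z/l^m\mathbb Z$ to $\mathbb Z_l$ coefficients, and one must correctly identify each exceptional fibre $E_d$ with a smooth quadric fourfold in order to quote its $\mathbb Z_l$-cohomology. Granting these, the degree-by-degree reading above is routine.
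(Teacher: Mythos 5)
Your proof is correct and follows essentially the same route as the paper: both identify $Y_1\to Y$ as the blow-up of the $120$ nodes lying over the pairwise intersections of the lines of $K$, identify each exceptional divisor with the smooth quadric fourfold $\{y_0^2+y_1^2+y_2^2+y_3^2+u_1u_2=0\}\subset\mathbb P^5_{\overline k}$, whose $\mathbb Z_l$-cohomology is free of ranks $1,1,2,1,1$ in degrees $0,2,4,6,8$ (the paper quotes Theorem 1.13 of \cite{Re1} for this four-dimensional quadric rather than 1.12), and combine this with Proposition 6.2 via proper base change. The only difference is packaging: the paper runs the Leray spectral sequence of the blow-down $\sigma$, whose $E_2$-page is concentrated in the row $j=0$ and the column $i=0$ and so collapses to the same extensions, while you use the blow-up Mayer--Vietoris sequence; both yield $0\to\mathbb Z_l^2\to H^2(\overline Y_1,\mathbb Z_l)\to\mathbb Z_l^{120}\to 0$ and exhibit $H^4(\overline Y_1,\mathbb Z_l)$ as an extension of torsion-free groups.
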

\begin{proof} First note that for every point $d\in D$ the blow-up of the unique isolated singularity of the fibre $\rho^{-1}(d)$ is isomorphic to the four-dimensional non-singular quadric hypersurface $C_5$:
$$y_0^2+y^2_1+y^2_2+y^2_3+u_1u_2=0$$
where $[y_0:y_1:y_2:y_3:u_1:u_2]$ are the homogeneous coordinates for $\mathbb P^5_{\overline k}$ and we continue to use the notation of the proof of Lemma 4.1. By Theorem 1.13 of \cite{Re1} we have:
$$H^i(C_5,\mathbb Z_l)\cong\left\{\begin{array}{ll}\mathbb Z_l,&\text{if $i=0,2,6,8$,}\\
\mathbb Z_l^2,&\text{if $i=4$,}
\\0,&\text{otherwise.}\end{array}\right.$$
Let $\sigma:Y_1\rightarrow Y$ be the blow-up map. By the proper base change theorem $R^0\overline{\sigma}_*(\mathbb Z_l)\cong
\mathbb Z_l$ and $R^1\overline{\sigma}_*(\mathbb Z_l)\cong R^3\rho_*(\mathbb Z_l)\cong0$, while $R^2\overline{\sigma}_*(\mathbb Z_l)$ and $R^4\overline{\sigma}_*(\mathbb Z_l)$ are skyscraper sheaves supported on the set $\widetilde D$ of singular points in $\rho^{-1}(D)$ such that their stalk at every $d\in\widetilde D$ is isomorphic to $\mathbb Z_l$ and $\mathbb Z_l^2$, respectively. There is a Leray-Serre spectral sequence:
$$E^2_{i,j}=H^i(\overline Y,R^j\overline{\sigma}_*(\mathbb Z_l))
\Rightarrow
H^{i+j}(\overline Y_1,\mathbb Z_l).$$
By the above
$$E^2_{i,j}\cong\left\{\begin{array}{ll}
H^i(\overline Y,\mathbb Z_l),&\text{if $j=0$,}\\
H^j(C_5,\mathbb Z_l)^{\oplus120},&\text{if $i=0,j>0$,}\\
0,&\text{otherwise,}\end{array}\right.$$
since $|\widetilde D|=|D|=120$. Clearly $H^0(\overline Y_1,\mathbb Z_l)=\mathbb Z_l$ since $Y_1$ is connected. By Proposition 6.2 and the above it is also clear that $E^2_{i,j}=0$ when $i+j=1$ or $i+j=3$, and hence $H^1(\overline Y_1,\mathbb Z_l)=H^3(\overline Y_1,\mathbb Z_l)=0$. Because the domain and the range, respectively, of the boundary maps $d^2_{0,1}:E^2_{0,1}\rightarrow E^2_{2,0}$ and $d^2_{0,2}:E^2_{0,2}\rightarrow E^2_{2,1}$ are zero, we get that
$$E^{\infty}_{2,0}\cong E^2_{2,0}\cong\mathbb Z_l^2\textrm{ and }
E^{\infty}_{0,2}\cong E^2_{0,2}\cong\mathbb Z_l^{120}.$$
Therefore we have $H^2(\overline Y_1,\mathbb Z_l)\cong\mathbb Z_l^{122}$. By the above the groups $E^2_{2,1},E^3_{1,2},E^4_{0,3}$ are all zero. Therefore the images of the boundary maps $d^2_{2,1}:E^2_{2,1}\rightarrow E^2_{4,0},
d^3_{1,2}:E^3_{1,2}\rightarrow E^3_{4,0},d^4_{0,3}:E^4_{0,3}\rightarrow E^4_{4,0}$ are also zero. So $E^{\infty}_{4,0}\cong E^2_{4,0}=H^4(\overline Y,\mathbb Z_l)$ is torsion-free by part $(b)$ of Proposition 6.2. By the above $E^2_{0,4}$ is torsion-free. The group $E^{\infty}_{0,4}$ is a subgroup of $E^2_{0,4}$ so it is also torsion-free. We get that $H^4(\overline Y_1,\mathbb Z_l)$ has a filtration by torsion-free groups, so it is torsion-free, too. 
\end{proof}
\begin{defn} We say that a morphism $\phi:A\rightarrow B$ of $n$-dimensional varieties over $\overline k$ is a simple $q$-map if it is the blow-up of a subvariety $K\subset B$ which is isomorphic to $\mathbb P^1_{\overline k}$ and the map $\phi|_{\phi^{-1}(K)}:\phi^{-1}(K)\rightarrow K$ is a quadratic fibration, that is, there is a $\mathbb P^{n-1}$-bundle $\sigma:T\rightarrow K$ and a closed immersion of $K$-schemes $\iota:\phi^{-1}(K)\rightarrow T$ such that $\phi|_{\phi^{-1}(K)}=\sigma\circ\iota$ and for every $\overline k$-valued point $p$ of $K$ the restriction $\iota|_{\phi^{-1}(p)}:\phi^{-1}(p)\rightarrow\sigma^{-1}(p)\cong\mathbb P^{n-1}_{\overline k}$ is the closed imbedding of a quadratic hypersuface into its ambient projective space. We say that a morphism $\phi:A\rightarrow B$ as above is a $q$-map if it is the composition of finitely many simple $q$-maps.
\end{defn}
\begin{prop} Let $\phi:A\rightarrow B$ be a $q$-map of $5$-dimensional projective varieties over $\overline k$. Assume that
$$H^1(B,\mathbb Z_l)=H^3(B,\mathbb Z_l)=0,$$
and both $H^2(B,\mathbb Z_l)$ and $H^4(B,\mathbb Z_l)$ are torsion-free. Then the same holds for $A$.
\end{prop}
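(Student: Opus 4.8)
The plan is to reduce to the case of a single simple $q$-map and then compare the cohomology of $A$ with that of $B$ through the exceptional divisor. Since a $q$-map is a finite composition of simple $q$-maps, and the hypotheses have exactly the same shape as the conclusion, an induction on the length of the composition reduces us to the case where $\phi\colon A\to B$ is a simple $q$-map: the blow-up of some $K\subset B$ with $K\cong\mathbb P^1_{\overline k}$, with exceptional divisor $E=\phi^{-1}(K)$ and $p=\phi|_E\colon E\to K$ a quadratic fibration. The morphism $\phi$ is proper and an isomorphism over $B\setminus K$, so writing $j\colon B\setminus K\hookrightarrow B$ and $\iota\colon K\hookrightarrow B$ for the complementary immersions, the localization triangle $j_!j^*\to\mathrm{id}\to\iota_*\iota^*$ applied to the natural map $\mathbb Z/l^n\to R\phi_*\mathbb Z/l^n$ of sheaves on $B$ shows, using proper base change $\iota^*R\phi_*\mathbb Z/l^n\cong Rp_*\mathbb Z/l^n$, that its cone is $\iota_*$ of the cone of $\mathbb Z/l^n\to Rp_*\mathbb Z/l^n$ on $K$. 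Rearranging into a Mayer--Vietoris triangle, taking cohomology over $B$, and passing to the inverse limit over $n$ (legitimate since all groups involved are finite) gives an exact sequence
$$\cdots\to H^{i-1}(E,\mathbb Z_l)\to H^i(B,\mathbb Z_l)\to H^i(A,\mathbb Z_l)\oplus H^i(K,\mathbb Z_l)\to H^i(E,\mathbb Z_l)\to H^{i+1}(B,\mathbb Z_l)\to\cdots.$$

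Next I would show that $E$ itself satisfies the conclusion, i.e.\ $H^1(E,\mathbb Z_l)=H^3(E,\mathbb Z_l)=0$ and $H^2(E,\mathbb Z_l)$, $H^4(E,\mathbb Z_l)$ are torsion-free, by running the Leray spectral sequence $H^i(\mathbb P^1_{\overline k},R^jp_*\mathbb Z_l)\Rightarrow H^{i+j}(E,\mathbb Z_l)$, which has only the columns $i=0,1,2$ since $\mathbb P^1_{\overline k}$ has $l$-cohomological dimension $2$. Every geometric fibre of $p$ is a quadric hypersurface in $\mathbb P^4_{\overline k}$: it is either smooth, or a cone over a lower-dimensional quadric, or a union of two hyperplanes, or a doubled hyperplane. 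In each case its \'etale cohomology is given by Propositions 5.1, 5.3, 5.5 and 5.6 (the last, non-reduced case reducing to $\mathbb P^3_{\overline k}$), so that $H^1=H^3=0$, $H^4$ is torsion-free, and $H^2$ is free of rank one generated by the hyperplane section. Hence by proper base change $R^0p_*\mathbb Z_l\cong\mathbb Z_l$ (the fibres are connected), $R^1p_*\mathbb Z_l=R^3p_*\mathbb Z_l=0$, and $R^4p_*\mathbb Z_l$ has torsion-free stalks; moreover, exactly as in the proof of Lemma 6.1, embedding $E$ into the $\mathbb P^4$-bundle $\tau\colon T\to\mathbb P^1_{\overline k}$ supplied by the definition of a quadratic fibration and using that the hyperplane class generates $H^2$ of every fibre identifies $R^2p_*\mathbb Z_l$ with $R^2\overline{\tau}_*\mathbb Z_l\cong\mathbb Z_l$. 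All the relevant differentials now vanish for dimension reasons, and one reads off $H^1(E,\mathbb Z_l)=H^3(E,\mathbb Z_l)=0$, $H^2(E,\mathbb Z_l)\cong\mathbb Z_l^2$, and a two-step filtration of $H^4(E,\mathbb Z_l)$ with torsion-free graded pieces, so $H^4(E,\mathbb Z_l)$ is torsion-free.

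Finally I would chase the blow-up exact sequence, using $H^0(K,\mathbb Z_l)\cong H^2(K,\mathbb Z_l)\cong\mathbb Z_l$ and $H^i(K,\mathbb Z_l)=0$ otherwise. In degrees $1$ and $3$, the vanishing of $H^1(B,\mathbb Z_l)$, $H^3(B,\mathbb Z_l)$, $H^1(E,\mathbb Z_l)$ and $H^3(E,\mathbb Z_l)$ forces $H^1(A,\mathbb Z_l)\oplus H^1(K,\mathbb Z_l)=0$ and $H^3(A,\mathbb Z_l)\oplus H^3(K,\mathbb Z_l)=0$, hence $H^1(A,\mathbb Z_l)=H^3(A,\mathbb Z_l)=0$. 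In degree $2$ one obtains a short exact sequence $0\to H^2(B,\mathbb Z_l)\to H^2(A,\mathbb Z_l)\oplus H^2(K,\mathbb Z_l)\to H^2(E,\mathbb Z_l)\to 0$ whose outer terms are torsion-free, so its middle term, and therefore its submodule $H^2(A,\mathbb Z_l)$, is torsion-free. In degree $4$, since $H^3(E,\mathbb Z_l)=0$ and $H^4(K,\mathbb Z_l)=0$, $H^4(A,\mathbb Z_l)$ is an extension of a submodule of the torsion-free group $H^4(E,\mathbb Z_l)$ by the torsion-free group $H^4(B,\mathbb Z_l)$, hence is torsion-free. This completes the induction. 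The step demanding the most care is the analysis of $E$: one has to control $R^2p_*\mathbb Z_l$ and the torsion-freeness of $R^4p_*\mathbb Z_l$ uniformly over all the possibly degenerate quadric fibres, which is exactly where the computations of Section 5 enter; by comparison, making the abstract blow-up exact sequence precise with $\mathbb Z_l$-coefficients is routine.
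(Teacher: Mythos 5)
Your argument is correct, and it organizes the computation differently from the paper. After the same reduction to a single simple $q$-map, the paper runs the Leray spectral sequence of $\phi\colon A\to B$ itself: it computes $R^j\phi_*\mathbb Z_l$ fibrewise ($R^0\cong\mathbb Z_l$, $R^1=R^3=0$, $R^2\cong\kappa_*\mathbb Z_l$ via the embedding into the $\mathbb P^4$-bundle and the triviality of $\pi_1(\mathbb P^1_{\overline k})$, $R^4$ supported on $K$ with torsion-free stalks) and then reads off a filtration of $H^i(A,\mathbb Z_l)$ with the required properties. You instead invoke the blow-up (localization/Mayer--Vietoris) long exact sequence relating $H^*(B)$, $H^*(A)\oplus H^*(K)$ and $H^*(E)$, and push the fibrewise input from Section 5 through the Leray spectral sequence of $p\colon E\to K$ rather than of $\phi$. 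The two routes consume exactly the same inputs (parts $(a)$ and $(b)$ of Propositions 5.1, 5.3, 5.5, 5.6, plus the $\mathbb P^4$-bundle trick for $R^2$), and your bookkeeping in each degree is sound, including the extension arguments for torsion-freeness in degrees $2$ and $4$ and the limit over $\mathbb Z/l^n$ (legitimate by finiteness of the groups). What your version buys is a more transparent relation between $H^*(A)$ and $H^*(B)$ --- for instance it immediately yields the rank of $H^2(A,\mathbb Z_l)$, not just its torsion-freeness --- at the cost of having to justify the descent sequence for the abstract blow-up square; the paper's version avoids that sequence entirely by staying inside a single Leray spectral sequence over $B$. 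One presentational caution: you allow rank-one (doubled hyperplane) fibres, which the paper's Definition 6.4 does not exclude but whose cohomology is not among Propositions 5.1--5.6; your remark that this case reduces to $\mathbb P^3_{\overline k}$ disposes of it, but it should be stated as such rather than attributed to those propositions.
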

\begin{proof} By induction we may reduce to the case when $\phi$ is a simple $q$-map. Let $\kappa:K\rightarrow B$ be the subvariety which is the centre of the blow-up $\phi$. By part $(a)$ of Propositions 5.1, 5.3, 5.5 and 5.6 and the proper base change theorem $R^0\phi_*(\mathbb Z_l)\cong
\mathbb Z_l$ and $R^1\phi_*(\mathbb Z_l)\cong R^3\phi_*(\mathbb Z_l)\cong0$, while $R^4\phi_*(\mathbb Z_l)$ is a sheaf supported on $K$ with torsion-free stalks. Let $\sigma:T\rightarrow K$ be a $\mathbb P^4$-bundle and let $\iota:\phi^{-1}(K)\rightarrow T$ be a closed immersion of $K$-schemes which satisfies the conditions of Definition 6.4 above. The sheaf $R^2\sigma_*(\mathbb Z_l)$ is lisse of rank one. Because the \'etale fundamental group of $K\cong\mathbb P^1_{\overline k}$ is trivial we get that $R^2\sigma_*(\mathbb Z_l)\cong\mathbb Z_l$. By part $(b)$ of Propositions 5.1, 5.3, 5.5 and 5.6 and the proper base change theorem the map $\iota^*:R^2\sigma_*(\mathbb Z_l)\rightarrow R^2\phi_*(\mathbb Z_l)$ is an isomorphism. Therefore $R^2\phi_*(\mathbb Z_l)\cong\mathbb Z_l$. There is a Leray-Serre spectral sequence:
$$E^2_{i,j}=H^i(B,R^j\phi_*(\mathbb Z_l))
\Rightarrow
H^{i+j}(A,\mathbb Z_l).$$
By the above we have 
$$E^2_{1,0}=H^1(B,\mathbb Z_l)=0\text{ and }E^2_{0,1}=H^0(B,0)=0,$$
and hence $H^1(A,\mathbb Z_l)=0$. Similarly by the above we have
$$E^2_{3,0}=H^3(B,\mathbb Z_l)=0,\ E^2_{2,1}=H^2(B,0)=0,$$
$$E^2_{1,2}=H^1(B,\kappa_*(\mathbb Z_l))=H^1(K,\mathbb Z_l)=0,\text{ and }E^2_{0,3}=H^0(B,0)=0,$$
and hence $H^3(A,\mathbb Z_l)=0$. Moreover
$$E^2_{2,0}=H^2(B,\mathbb Z_l),\ E^2_{1,1}=H^1(B,0)=0,\ E^2_{0,2}=H^0(B,\kappa_*(\mathbb Z_l))=H^0(K,\mathbb Z_l)=\mathbb Z_l.$$
Moreover the domain and the range, respectively, of the boundary maps $d^2_{0,1}:E^2_{0,1}\rightarrow E^2_{2,0}$ and $d^2_{0,2}:E^2_{0,2}\rightarrow E^2_{2,1}$ are zero, so we get that
$E^{\infty}_{2,0}\cong E^2_{2,0}$ and $E^{\infty}_{0,2}\cong E^2_{0,2}$,
and hence $H^2(A,\mathbb Z_l)$ has a filtration by torsion-free groups. Therefore it is torsion-free, too. We have:
$$E^2_{4,0}=H^4(B,\mathbb Z_l),\ E^2_{3,1}=H^3(B,0)=0,$$
$$E^2_{2,2}=H^2(B,\kappa_*(\mathbb Z_l))=H^2(K,\mathbb Z_l)=\mathbb Z_l,\text{ and }E^2_{1,3}=H^1(B,0)=0.$$
By the above the groups $E^2_{2,1},E^3_{1,2},E^4_{0,3}$ are all zero. Therefore the images of the boundary maps $d^2_{2,1}:E^2_{2,1}\rightarrow E^2_{4,0},
d^3_{1,2}:E^3_{1,2}\rightarrow E^3_{4,0},d^4_{0,3}:E^4_{0,3}\rightarrow E^4_{4,0}$ are also zero. So $E^{\infty}_{4,0}$ is the subgroup of $E^2_{4,0}$ and hence torsion-free. Because $E^2_{0,3}$ is also zero, for the same reason we get that $E^{\infty}_{2,2}$ is the subgroup of $E^2_{2,2}$ and hence torsion-free, too. Clearly $E^2_{0,4}=H^0(B,R^4\phi_*(\mathbb Z_l))$ is torsion-free, so its subgroup $E^{\infty}_{0,4}$ is also torsion-free. We get that $H^4(A,\mathbb Z_l)$ has a filtration by torsion-free groups, so it is torsion-free, too. 
\end{proof}
\begin{proof}[Proof of Theorem 1.3] We will continue to use the notation of Definition 4.4. Let $\phi:Y_0\rightarrow Y_1$ be the blow-up of the inverse image $\widetilde W\subset Y_1$ of $W$ with respect to the blow-up map $\sigma:Y_1\rightarrow Y$ of the proof of Proposition 6.3. Let $\psi:X\rightarrow Y_0$ be the blow-up of the proper transform $\widetilde Z\subset Y_0$ of the closed subscheme $Z\subset Y$ inside $Y_0$. By Propositions 6.3 and 6.5 it will be enough to show that both $\overline{\phi}_0$ and $\overline{\phi}_1$ are $q$-maps. Let $\tau:T\rightarrow\mathbb P^2_k$ be a $\mathbb P^4$-bundle and let $\iota:Y\rightarrow T$ be a closed immersion of $\mathbb P^2_k$-schemes such that $\rho=\tau\circ\iota$ as in the proof of Lemma 6.1, and whose description was given at the beginning of the fourth section. Let $\alpha:T_0\rightarrow T$ be the blow-up the disjoint closed subschemes $\iota(W)\subset T$ and the isolated singularities of $\iota(Y_{xy})$ inside $T$. Then there is a closed immersion $\iota_0:Y_0\rightarrow T_0$ of $\mathbb P^2_k$-schemes such that $\alpha\circ\iota_0=\iota\circ\sigma\circ\phi$. Let $\beta:S\rightarrow T_0$ be the blow-up of the proper transform $\widehat Z\subset Y_0$ of the closed subscheme $\iota(Z)\subset T$. There is a closed immersion $\kappa:X\rightarrow S$ of $\mathbb P^2_k$-schemes such that $\beta\circ\kappa=\iota_0\circ\psi$. We have the following commutative diagrams:
$$\begin{CD}
\phi^{-1}(\widetilde W)@>\iota_0>>\alpha^{-1}(\iota(W))
@.\quad\quad\psi^{-1}(\widetilde Z)@>\kappa>>\beta^{-1}(\widehat Z)\\
@V{\phi}VV@V{\alpha}VV@V{\psi}VV@V{\beta}VV\\
\widetilde W@>\iota\circ\sigma>>\iota(W)
@.\ \widetilde Z@>\iota_0>>\widehat Z.
\end{CD}$$
The upper horizontal maps are closed immersions, the lower horizontal maps are isomorphisms, while the both right vertical maps are $\mathbb P^4$-bundles. Since the irreducible components of the base change of $\widetilde W$ and $\widetilde Z$ to $\overline k$ are projective lines, it will be enough to show that the geometric fibres of the maps $\phi:\phi^{-1}(\widetilde W)\rightarrow\widetilde W$ and $\psi:\psi^{-1}(\widetilde Z)\rightarrow\widetilde Z$ are defined by quadratic equations inside the families $\alpha:\alpha^{-1}(\iota(W))
\rightarrow\iota(W)$ and $\beta:\beta^{-1}(\widehat Z)\rightarrow\widehat Z$, respectively. 

We will first describe the family over $\widetilde W$. We are going to use the notation of the proof of Lemma 4.3. Let $[r_0:r_1]$ be the homogeneous coordinates for $\widetilde W\cong\mathbb P^1_{\overline k}$. The family $\phi:\phi^{-1}(W)\rightarrow W$ is given by the equations:
$$r_1v_1v_2+y^2_0+y^2_1+y^2_4=0,$$
$$r_0v_1v_2+y^2_0+y^2_1+y^2_4=0,$$
on the open subschemes $\{r_0\neq0\}$ and $\{r_1\neq0\}$, respectively. According to the computations of Lemma 4.2 for every $p\in\widetilde Z$ the fibre $\psi^{-1}(p)$ is given by the equation
$$s_0v_1+y^2_0+y^2_1+y^2_4=0,$$
$$s_0v_1+y^2_0+y^2_1=0,$$
when $\phi(\rho(p))\in V_{xy}-\{[0:0:1]\}-\bigcup_{l\in K}l$ and when $\phi(\rho(p))\in V_{xy}\cap\bigcup_{l\in K}l$, respectively. Similarly for every $p\in\widetilde Z$ such that $\phi(\rho(p))=[0:0:1]$ the fibre $\psi^{-1}(p)$ is given by the equation
$$s_1w_2+z^2_0+z^2_1+z^2_4=0$$
according to the computations of Lemma 4.3.
\end{proof}
We finish this paper by showing that there are smooth $5$-dimensional projective rational varieties over every algebraically closed field of odd characteristic which have $2$-torsion in its fourth dimensional \'etale cohomology. In the rest of the article let $k$ denote an algebraically closed field of odd characteristic.
\begin{lemma} For every $n\in\mathbb N$ there is a smooth projective surface $X$ over $k$ such that the order of the $2$-torsion in $H^2(X,\mathbb Z_2)$ is at least $2^n$. 
\end{lemma}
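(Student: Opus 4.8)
The plan is to reduce the statement, by way of the Kummer sequence, to the construction of surfaces with $b_1=0$ whose \'etale fundamental group has a large elementary abelian $2$-quotient, and then to realise such surfaces as free quotients of products of two Kummer covers of $\mathbb P^1_k$.

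\emph{Cohomological reduction.} I would first observe that for a smooth projective surface $X$ over the algebraically closed field $k$ the Kummer sequence gives $H^1(X,\mathbb Z/2)\cong\mathrm{Pic}(X)[2]$ (using $\mu_2\cong\mathbb Z/2$ over $k$ and $k^*=(k^*)^2$) and, in the inverse limit, an exact sequence $0\to\mathrm{NS}(X)\otimes\mathbb Z_2\to H^2(X,\mathbb Z_2)\to T_2\mathrm{Br}(X)\to 0$ with torsion-free right-hand term, so that $H^2(X,\mathbb Z_2)_{\mathrm{tors}}\cong\mathrm{NS}(X)[2^\infty]$. If in addition $b_1(X)=0$, then $\mathrm{Pic}^0_X$ is infinitesimal, hence $\mathrm{Pic}(X)=\mathrm{NS}(X)$, and therefore
$$|H^2(X,\mathbb Z_2)_{\mathrm{tors}}|\ \geq\ |\mathrm{NS}(X)[2]|\ =\ |\mathrm{Pic}(X)[2]|\ =\ |H^1(X,\mathbb Z/2)|\ =\ |\mathrm{Hom}(\pi_1(X),\mathbb Z/2)|,$$
where $\pi_1$ is the \'etale fundamental group. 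So it is enough to produce, for every $n$, a smooth projective surface $X/k$ with $b_1(X)=0$ admitting a connected \'etale Galois cover with group $(\mathbb Z/2)^n$; and since a surface that works for some $n$ works for all smaller values as well, we may take $n$ as large as convenient.

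\emph{Construction.} Set $G=(\mathbb Z/2)^n$ and choose two disjoint subsets $B,B'\subset G\setminus\{0\}$, each spanning $G$ over $\mathbb F_2$ and with vanishing sum; such a pair exists for all large $n$ (for instance $B=\{e_1,\dots,e_n,e_1+\cdots+e_n\}$ together with a suitable set $B'$ of vectors of weight two and three disjoint from $B$ --- a routine $\mathbb F_2$-linear-algebra exercise, which is all that is needed). Given a spanning subset $\{v_1,\dots,v_r\}\subset G\setminus\{0\}$ with $\sum_i v_i=0$ and $r$ distinct points $Q_1,\dots,Q_r\in\mathbb P^1(k)$, the homomorphism $\pi_1(\mathbb P^1_k\setminus\{Q_1,\dots,Q_r\})\to G$ sending the class of a small loop around $Q_i$ to $v_i$ is surjective, and the corresponding connected cover is a smooth projective curve $\widetilde C$ with a faithful $G$-action and $\widetilde C/G\cong\mathbb P^1_k$; here $\widetilde C$ is smooth because it is the normalisation of a fibre product of double covers, it is connected because the $v_i$ generate $G$, and the inertia subgroup of $G$ over $Q_i$ is $\langle v_i\rangle$, so $g\neq 0$ in $G$ has a fixed point on $\widetilde C$ precisely when $g\in\{v_1,\dots,v_r\}$. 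Applying this to $B$ and to $B'$ yields curves $\widetilde C$ and $\widetilde C'$, and the diagonal action of $G$ on $\widetilde C\times\widetilde C'$ is free: for $0\neq g\in G$ either $g\notin B$, whence $g$ acts freely on $\widetilde C$, or $g\in B$, whence $g\notin B'$ and $g$ acts freely on $\widetilde C'$. Thus $X:=(\widetilde C\times\widetilde C')/G$ is a smooth projective surface, $\widetilde C\times\widetilde C'\to X$ is a connected \'etale Galois cover with group $G$, and by transfer $H^1(X,\mathbb Q_\ell)=H^1(\widetilde C,\mathbb Q_\ell)^G\oplus H^1(\widetilde C',\mathbb Q_\ell)^G=H^1(\mathbb P^1_k,\mathbb Q_\ell)^{\oplus 2}=0$, so $b_1(X)=0$. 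The reduction above then gives $|H^2(X,\mathbb Z_2)_{\mathrm{tors}}|\geq 2^n$.

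The step demanding the most care is the bookkeeping in the cohomological reduction: one must make sure that $b_1(X)=0$ genuinely forces $\mathrm{Pic}(X)=\mathrm{NS}(X)$, so that the $2^n$ torsion line bundles coming from the \'etale $(\mathbb Z/2)^n$-cover are Néron--Severi torsion and not merely torsion points of a nontrivial $\mathrm{Pic}^0$; granting this, the identification $H^2(X,\mathbb Z_2)_{\mathrm{tors}}\cong\mathrm{NS}(X)[2^\infty]$ is standard. The geometry itself is elementary, the only slightly fiddly ingredient being the partition of $G\setminus\{0\}$ into two disjoint spanning subsets with zero sums, which is exactly where the harmless restriction to large $n$ enters.
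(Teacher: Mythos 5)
Your argument is correct, but it follows a genuinely different route from the paper's. The paper starts from a single surface already known to have $2$-torsion in $H^2$ --- the bielliptic surface $S=E\times E/(\mathbb Z/2\mathbb Z)$ --- multiplies the torsion by taking the $n$-fold product $S^n$ and applying the K\"unneth formula, and then returns to dimension two by choosing a smooth surface $X\subset S^n$ for which the weak Lefschetz theorem makes $H^2(S^n,\mathbb Z_2)\rightarrow H^2(X,\mathbb Z_2)$ injective. You instead trade $2$-torsion in $H^2$ for $2$-torsion in $\mathrm{NS}$ via the Kummer sequence, then (under $b_1=0$) for $(\mathbb Z/2\mathbb Z)^n$-quotients of the \'etale fundamental group, and build the required surfaces directly as free $(\mathbb Z/2\mathbb Z)^n$-quotients of products of abelian covers of $\mathbb P^1_k$. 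Each approach has its merits: the paper's proof is three lines long once one grants the bielliptic example and weak Lefschetz with $\mathbb Z_2$-coefficients, whereas yours is essentially self-contained and explains, rather than assumes, where the torsion comes from (your building block is of exactly the same nature as the paper's seed surface $S$, itself a free quotient of a product of curves). Your inputs --- the structure of the tame fundamental group of $\mathbb P^1_k$ minus finitely many points in odd characteristic, and the identification $H^2(X,\mathbb Z_2)_{\mathrm{tors}}\cong\mathrm{NS}(X)[2^\infty]$ with the observation that $b_1=0$ forces $\mathrm{Pic}^0_{X,\mathrm{red}}=0$ and hence $\mathrm{Pic}(X)[2]=\mathrm{NS}(X)[2]$ --- are both solid, and the remaining combinatorics (two disjoint spanning subsets of $(\mathbb Z/2\mathbb Z)^n\setminus\{0\}$ with zero sum, for $n$ large) is routine and harmless, since establishing the bound for large $n$ establishes it for all smaller $n$ as well.
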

\begin{proof} Let $S$ be a smooth projective variety over $k$ such that the $2$-torsion of $H^2(S,\mathbb Z_2)$ is non-trivial. (For example we may take $S$ to be the hyperelliptic surface $E\times E/(\mathbb Z/2\mathbb Z)$ where $E$ is an elliptic curve and the generator of $\mathbb Z/2\mathbb Z$ acts on $E\times E$ by translation by a non-zero $2$-torsion element on the first factor and by multiplication by $-1$ on the second factor.) Let $S^n$ denote the $n$-fold self-product of $S$. By the K\"unneth formula the order of the $2$-torsion in $H^2(S^n,\mathbb Z_2)$ is at least $2^n$. By the Lefschetz hyperplane section theorem there is a smooth projective surface $X\subseteq S^n$ such that the pull-back map $H^2(S^n,\mathbb Z_2)\rightarrow H^2(X,\mathbb Z_2)$ is injective. The claim is now clear.
\end{proof}
\begin{prop} There is a smooth $5$-dimensional projective rational variety $Y$ over $k$ such that the order of the $2$-torsion in $H^2(Y,\mathbb Z_2)$ is non-zero.
\end{prop}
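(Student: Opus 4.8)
The plan is to manufacture the required five-fold from the torsion surface produced in Lemma 6.6 by a single blow-up of projective space. First I would apply Lemma 6.6 (with $n=1$) to obtain a smooth projective surface $X$ over $k$ whose cohomology group $H^2(X,\mathbb Z_2)$ has non-trivial $2$-torsion. Since $X$ has dimension two, a generic linear projection from any fixed projective embedding realises $X$ as a smooth closed subvariety of $\mathbb P^5_k$; I would fix such an embedding, so that $X\subset\mathbb P^5_k$ is a smooth centre of codimension three.

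I would then let $Y\to\mathbb P^5_k$ be the blow-up of $\mathbb P^5_k$ along $X$. Being the blow-up of a rational variety along a smooth subvariety, $Y$ is a smooth projective rational variety of dimension five, so it already has all the geometric features demanded by the statement; the content of the proposition is thereby reduced to exhibiting non-trivial $2$-torsion in $H^2(Y,\mathbb Z_2)$.

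For the cohomology I would invoke the blow-up formula for $l$-adic \'etale cohomology with $l=2$ (legitimate since $2\neq\mathrm{char}(k)$), which expresses $H^\ast(Y,\mathbb Z_2)$ as a direct sum of $H^\ast(\mathbb P^5_k,\mathbb Z_2)$ together with Tate-twisted copies of the cohomology of the centre $X$. The cohomology of $\mathbb P^5_k$ is torsion-free, so every torsion class in $H^\ast(Y,\mathbb Z_2)$ originates in the summands built from $H^\ast(X,\mathbb Z_2)$; in particular the non-trivial $2$-torsion of $H^2(X,\mathbb Z_2)$ persists as a direct summand of $H^2(Y,\mathbb Z_2)$ and so forces the $2$-torsion of the latter group to be non-zero. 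Concretely I would establish the formula from the proper base change theorem together with the localisation (Gysin) sequence attached to the exceptional divisor $\mathbb P(N_{X/\mathbb P^5_k})$, using the projective-bundle formula to identify the cohomology of that divisor as $H^\ast(X,\mathbb Z_2)$ tensored with the cohomology of the projectivised normal bundle.

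The main obstacle is the integral bookkeeping: I must check that the $2$-torsion of $H^2(X,\mathbb Z_2)$ survives as a genuine summand of $H^2(Y,\mathbb Z_2)$ rather than being annihilated by a differential or a boundary map, which amounts to verifying that the relevant piece of the blow-up decomposition splits integrally and not merely after tensoring with $\mathbb Q_2$. Here the torsion-freeness of the cohomology of $\mathbb P^5_k$ and the functoriality of the localisation sequence must be used with care; by comparison the embedding step and the rationality of $Y$ are routine.
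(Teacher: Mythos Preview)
Your construction is exactly the paper's: embed a surface $X$ with $2$-torsion in $H^2$ into $\mathbb P^5_k$ and let $Y$ be the blow-up along $X$. The error is a degree shift in your application of the blow-up formula. For the blow-up $Y\to Z$ along a smooth centre of codimension $c$ one has
\[
H^n(Y,\mathbb Z_2)\ \cong\ H^n(Z,\mathbb Z_2)\ \oplus\ \bigoplus_{j=1}^{c-1} H^{n-2j}(X,\mathbb Z_2)(-j),
\]
so with $c=3$ and $n=2$ the only new summand is $H^0(X,\mathbb Z_2)(-1)\cong\mathbb Z_2$, and $H^2(Y,\mathbb Z_2)\cong\mathbb Z_2^{\,2}$ is torsion-free. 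The copy of $H^2(X,\mathbb Z_2)$ lands in $H^4(Y,\mathbb Z_2)$, not in $H^2$. Thus the literal claim you argue for is false for this $Y$.

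This is largely a typo in the proposition itself: both the announcement in the introduction (``$2$-torsion in its fourth dimensional \'etale cohomology'') and the paper's own proof work with $H^4(Y,\mathbb Z_2)$, not $H^2$. Once the degree is corrected your argument is fine and in one respect cleaner than the paper's. The paper runs the Leray spectral sequence for the blow-up map and only obtains a filtration piece $G\subset H^4(Y,\mathbb Z_2)$ sitting in an extension
\[
0\longrightarrow\mathbb Z_2\longrightarrow G\longrightarrow H^2(X,\mathbb Z_2)\longrightarrow 0;
\]
since such an extension can absorb a single $2$-torsion class of $H^2(X,\mathbb Z_2)$ into the copy of $\mathbb Z_2$, the paper must start from a surface with at least two independent $2$-torsion classes (it invokes Lemma 6.6 with torsion of order $\geq 4$). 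Your direct appeal to the integral blow-up decomposition gives a genuine direct summand, so $n=1$ in Lemma 6.6 already suffices.
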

\begin{proof} Let $X$ be a smooth projective surface over $k$ such that the order of the $2$-torsion in $H^2(X,\mathbb Z_2)$ is at least $4$. As it is very well-known there is a closed imbedding $i:X\rightarrow\mathbb P^5_k$ (see for example Theorem 2.25 of \cite{Sh1} on page 134). Let $Y$ be the variety which we get by blowing up $i(X)$ inside $\mathbb P^5_k$ and let $j:Y\rightarrow\mathbb P^5_k$ be the blow-up map. Then $j:j^{-1}(i(X))\rightarrow i(X)$ is a $\mathbb P^2$-bundle. Because the fibres of $j$ are connected we have $R^0j_*(\mathbb Z_2)\cong\mathbb Z_2$ by the proper base change theorem. Moreover the sheaf $R^ij_*(\mathbb Z_2)$ is zero when $i=1,3$, or at least $5$, again by the proper base change theorem. For $i=2$ or $4$ there is a lisse sheaf $\mathcal F_i$ of rank one on $X$ such that $R^ij_*(\mathbb Z_2)\cong i_*(\mathcal F_i)$. The chern class of the canonical bundle of the fibres of the map $j:j^{-1}(i(X))\rightarrow i(X)$ furnishes a non-zero section of $\mathcal F_2$ and hence $\mathcal F_2\cong\mathbb Z_2$. The cup-product of this section with itself gives a a non-zero section of $\mathcal F_4$ and hence $\mathcal F_4\cong\mathbb Z_2$, too. Therefore $R^2j_*(\mathbb Z_2)\cong R^4j_*(\mathbb Z_2)\cong\mathbb Z_2$. There is a Leray-Serre spectral sequence:
$$E^2_{p,q}=H^p(\mathbb P^5_k,R^qj_*(\mathbb Z_2))
\Rightarrow
H^{p+q}(Y,\mathbb Z_2).$$
By the above
$$E^2_{4,0}=H^4(\mathbb P^5_k,\mathbb Z_2)=\mathbb Z_2,\ E^2_{3,1}=H^3(\mathbb P^5_k,0)=0,$$
$$E^2_{2,2}=H^2(\mathbb P^5_k,i_*(\mathbb Z_2))=H^2(X,\mathbb Z_2),\ E^2_{1,3}=H^1(\mathbb P^5_k,0)=0,\text{ and }$$
$$E^2_{4,0}=H^4(\mathbb P^5_k,i_*(\mathbb Z_2))=H^4(X,\mathbb Z_2)=\mathbb Z_2.$$
As the groups $E^2_{0,3},E^2_{4,1},E^3_{5,0}$ are zero, the boundary maps $d^2_{2,1}:E^2_{0,3}\rightarrow E^2_{2,2},
d^2_{2,2}:E^2_{2,2}\rightarrow E^2_{4,1},d^3_{2,2}:E^3_{2,2}\rightarrow E^0_{5,0}$ are zero maps, and hence $E^{\infty}_{4,0}\cong E^2_{4,0}$. Similarly $E^2_{2,1},E^3_{1,2},E^4_{0,3}$ are zero, and hence $E^{\infty}_{4,0}\cong E^2_{4,0}$. Therefore there is a subgroup $G\leq H^4(Y,\mathbb Z_2)$ which sits in a short exact sequence:
$$\begin{CD} 0@>>>\mathbb Z_2@>>>
G@>>>H^2(X,\mathbb Z_2)@>>>0\end{CD}.$$
Since there are at least two linearly independent $2$-torsion elements in $H^2(X,\mathbb Z_2)$, the group $G$ has non-trivial $2$-torsion.
\end{proof}

\end{document}